\newcommand{\red}[1]{{\color{red} #1}}
\newcommand{\h}{{\tt h}}
\newcommand{\q}{{\tt q}}
\newcommand{\pe}{{\tt p}}
\newcommand{\bi}{{\tt b}}
\newcommand{\ta}{{\tt a}}
\newcommand{\x}{{\tt x}}
\newcommand{\y}{{\tt y}}
\newcommand{\s}{{\tt s}}
\newcommand{\z}{{\tt z}}
\newcommand{\w}{{\tt w}}
\newcommand{\Q}{\mathbb{Q}}
\newcommand{\R}{\mathbb{R}}
\newcommand{\diver}{\text{div}}
\newcommand{\eps}{\varepsilon}
 \newtheorem{lemma}{Lemma}[section]
\newtheorem{theorem}{Theorem}[section]
\newtheorem{proposition}{Proposition}[section]
\newtheorem{remark}{Remark}[section]
\newcommand{\bremark}{\begin{remark} \em}
\newcommand{\eremark}{\end{remark} }
\numberwithin{equation}{section}
\title[\tiny{k-ended $O(m)\times O(n)$ invariant solutions to the Allen-Cahn equation}]{k-ended $O(m)\times O(n)$ invariant solutions to the Allen-Cahn equation with infinite Morse index}
\author{Oscar Agudelo}
\address{University of West Bohemia in Pilsen-NTIS, Univerzitn\'{i} 22, Czech Republic.}
\email {oiagudel@ntis.zcu.cz}
\author{Matteo Rizzi}
\address{Mathematisches Institut, Justus Liebig Universit\"{a}t, Arndtstrasse 2, 35392, Giessen, Germany.}
\email{mrizzi1988@gmail.com}
\thanks{O. Agudelo was supported by the Grant 18-032523S
of the Grant Agency of the Czech Republic and also by the Project LO1506 of the Ministry
of Education, Youth and Sports of the Czech Republic. M. Rizzi was partially supported by the Alexander von Humboldt foundation.}
\begin{document}

\setlength{\parskip}{1pt}

\maketitle

\begin{abstract}
In this work we study existence, asymptotic behavior and stability properties of $O(m)\times O(n)-$invariant solutions of the Allen-Cahn equation $\Delta u +u(1-u^2)=0$ in $\R^m\times\R^n$ with $m,n\geq 2$ and $m+n\geq 8$. We exhibit four families of solutions whose nodal sets are smooth logarithmic corrections of the Lawson cone and with infinite Morse index. This work complements the study started in \cite{PW} by Pacard and Wei and \cite{AKR1} by Agudelo, Kowalckzyk and Rizzi.
\end{abstract}

\tableofcontents

\section{Introduction}
We consider the Allen-Cahn equation
\begin{equation}
\label{eq_AC}
-\Delta u=u-u^3 \quad \hbox{in} \quad \R^{N+1}. 
\end{equation}

Equation \eqref{eq_AC} arises in the context of the gradient theory of phase transitions \cite{AC} describing the behaviour of two liquids which are mixed in a container, or two different materials in a binary alloy.

In the seventy's, the developments in the $\Gamma$-convergence theory, (see \cite{Mo,MoMo}) showed the strong and still fruitful relation between equation \eqref{eq_AC} and the theory of {\it minimal surfaces.} A famous conjecture raised by E. De Giorgi in 1978 \cite{DeG} asserts that, at least when $N\le 7$, if $u:\R^{N+1}\to\R$ is a bounded solution of \eqref{eq_AC} which is monotone in one direction, then $u$ is one dimensional, i.e., $u$ depends only on one euclidean variable. In other words, $$
u(\xi)=v_\star(\nu\cdotp \xi+c) \quad \hbox{for } \xi\in \R^{N+1},$$
where $\nu\in S^N$ and $c\in\R$ are fixed and $v_\star(t):=\tanh(t/\sqrt{2})$ is the unique solution to the corresponding ODE problem
\begin{equation}
\label{EDO-AC}
-v''_\star=v_\star-v_\star^3,\qquad v_\star(0)=0,\qquad v_\star(\pm\infty)=\pm 1.
\end{equation}
The De Giorgi conjecture is also motivated by the Bernstein conjecture, which states that any entire {\it minimal graph} over $\R^N$ is affine if $N\le 7$ ( see \cite{Be}). The Bernstein conjecture is known to be true and sharp about the dimension, in fact in \cite{BDG} the authors construct an entire minimal graph $\Gamma$ over $\R^{2n}$ which is not affine, for any $n\ge 4$.

The De Giorgi conjecture is known to be true in dimension $N+1=2$ (see \cite{GG}) and $N+1=3$ (see \cite{AmCa,FSV}). Under the additional assumption that
$$\lim_{\xi_1\to\pm\infty} u(\xi_1,\xi')=\pm 1\qquad \forall\,\xi'\in\R^N,$$
one-dimensional symmetry was proved in dimension $4\le N+1\le 8$ in \cite{Sa}. In this case too, the conjecture is sharp about the dimension, since in dimension $N+1=9$ there exists a family of solutions to (\ref{eq_AC}) which are monotone in one direction but not one-dimensional (see \cite{DKW}). In fact, the zero level set is close to a dilated version of the minimal graph $\Gamma$ constructed in \cite{BDG}.

The monotonicity assumption in De Giorgi's conjecture is related to the stability of the solution and it is strongly connected to the stability properties of the nodal set of the solutions. Stability is a natural tool to classify bounded solutions of equation \eqref{eq_AC}. In dimension $N+1=2$, stable entire solutions are one-dimensional (see \cite{FSV}).

Stable entire solutions to the Allen-Cahn equation in $\R^{N+1}$ are expected to be one-dimensional if $N\le 6$, this is related to the fact that the Lawson cones
\begin{equation}
\label{Lawson_cones}
C_{m,n}:=\left\{(x,y)\in\R^m\times\R^n:\,(n-1)|x|^2=(m-1)|y|^2\right\},\qquad m,\,n\ge 2
\end{equation}
are stable provided $N+1=m+n\ge 8$ (see \cite{AKR1,Da}), while there are no minimising cones embedded in $\R^{N+1}$ if $N\le 6$.

In the case $N\ge 7$ stable solutions which are not one-dimensional are known to exist (see \cite{PW}). The nodal sets of these  solutions are close to stable (actually minimising) minimal hypersurfaces, which are asymptotic to the Lawson cone $C_{m,n}$ in the case $m+n\ge 9$ or $m+n=8$ and $m,\,n\ge 3$.

As for unstable solutions, the authors in \cite{ADW} exhibit two families of axially symmetric bounded solutions in $\R^3$. One of them having Morse Index one and the other having large Morse Index.

In \cite{AKR1}, the authors consider the case $N+1=m+n\ge 8$ with $m,\,n\ge 3$ and generalise the existence result in \cite{PW}. More precisely, in \cite{AKR1} the authors show solutions satisfying that
\begin{equation}
u(\sigma x,\gamma y)=u(x,y),\qquad\forall\, (x,y)\in\R^m\times \R^n,\,(\sigma,\gamma)\in O(m)\times O(n)
\end{equation}
and whose zero level set is the disjoint union of two connected components that are smooth logarithmic corrections from the Lawson cone $C_{m,n}$.

Our first main result in this work is the following.

\begin{theorem}
\label{main-th-AC}
Let $m,n\ge 2$, $N+1=n+m\ge 8$. Then there exists $\eps_0>0$ such that, for any $\eps\in(0,\eps_0)$, there exist a bounded solution $u_\eps$ to the Allen-Cahn equation (\ref{eq_AC}) such that
\begin{itemize}

\item[i.] \label{geom-inv} $u_\eps$ is smooth and $O(m)\times O(n)$-invariant;
\item[ii.]\label{k-end} the zero level set of $u_{\eps}$ is the disjoint union of $k$ connected components;

\item[iii.]\label{energy-estimate} there exists a constant $C>0$ such that, for any $\eps\in(0,\eps_0)$ and $R>2\eps^{-1}$, 
\begin{equation}
\label{energy-est}
\int_{B_R}\left(\frac{1}{2}|\nabla u_\eps|^2+\frac{1}{4}(1-u_\eps^2)^2\right) \le C R^N
\end{equation}

\item[iv.]\label{Morse-index} the Morse index of $u_\eps$ is infinite.
\end{itemize}
\end{theorem}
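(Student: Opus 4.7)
The plan is an infinite-dimensional Lyapunov--Schmidt reduction, in the spirit of \cite{PW,AKR1,DKW}. I would seek $u_\eps$ whose nodal set consists of $k$ disjoint smooth $O(m)\times O(n)$-invariant hypersurfaces $\Sigma_{1,\eps},\dots,\Sigma_{k,\eps}$, each expressed in the half-plane coordinates $(r,s)=(|x|,|y|)$ as a normal graph $\eta_{j,\eps}$ over the Lawson cone $C_{m,n}$. The inter-layer separations $\eta_{j+1,\eps}-\eta_{j,\eps}$ are of order $\eps|\log\eps|$ on the natural cone coordinate, so that neighbouring $\tanh$-profiles interact at the Toda scale.

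The first step is to set up an approximate solution by gluing $k$ copies of $\pm v_\star$ along the $\Sigma_{j,\eps}$ with alternating signs and a matching constant so that $u_\eps^{\rm app}\to\pm1$ at infinity. Computing the error and projecting onto each $v_\star'(t_j/\eps)$ (with $t_j$ the $\eps$-scaled signed distance to $\Sigma_{j,\eps}$) yields a Jacobi--Toda system for the heights,
\begin{equation*}
J_{C_{m,n}}\eta_{j,\eps}=a_\star\bigl(e^{-\sqrt{2}(\eta_{j,\eps}-\eta_{j-1,\eps})/\eps}-e^{-\sqrt{2}(\eta_{j+1,\eps}-\eta_{j,\eps})/\eps}\bigr)+\text{l.o.t.},
\end{equation*}
where $J_{C_{m,n}}=\Delta_{C_{m,n}}+|A_{C_{m,n}}|^2$ is the Jacobi operator of the cone and $a_\star>0$ is an explicit constant. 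Stability of $C_{m,n}$ when $m+n\ge 8$ makes $J_{C_{m,n}}$ invertible in suitable weighted spaces; matched asymptotic expansions near the vertex and at infinity, combined with a fixed-point iteration, produce the $\eta_{j,\eps}$ with logarithmic growth along the cone. The full perturbation $u_\eps=u_\eps^{\rm app}+\phi$ is then obtained by a contraction in weighted H\"older norms applied to the linearised operator $L_\eps=\eps^2\Delta+1-3(u_\eps^{\rm app})^2$ modulo the approximate kernel $\spann\{v_\star'(t_j/\eps)\}$, and closing the reduced equation for $\eta_{j,\eps}$ yields the actual solution $u_\eps$. The $O(m)\times O(n)$-invariance is preserved at every step. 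The bound (\ref{energy-est}) follows because each layer contributes an Allen--Cahn energy comparable to $\mathcal{H}^N(\Sigma_{j,\eps}\cap B_R)=O(R^N)$.

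For the infinite Morse index, I would restrict the second variation $Q_\eps(\phi)=\int(|\nabla\phi|^2-(1-3u_\eps^2)\phi^2)$ to perturbations $\phi=\sum_j\mu_j(r,s)v_\star'(t_j/\eps)$ with $\mu_j$ $O(m)\times O(n)$-invariant and supported near the layers. After integration in the normal direction, $Q_\eps$ reduces, up to $O(\eps)$ corrections, to the Jacobi--Toda quadratic form
\begin{equation*}
\tilde Q(\mu)=\eps\sum_{j=1}^k\int_{\Sigma_{j,\eps}}\bigl(|\nabla\mu_j|^2-|A|^2\mu_j^2\bigr)-c_\star\eps\sum_{j=1}^{k-1}\int_{\Sigma_{j,\eps}}e^{-\sqrt{2}d_{j,\eps}/\eps}(\mu_{j+1}-\mu_j)^2,
\end{equation*}
with $c_\star>0$. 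The interaction term is negative definite on the antisymmetric subspace $\mu_{j+1}=-\mu_j$. The Toda balancing forces $e^{-\sqrt{2}d_{j,\eps}/\eps}\sim C/(r^2+s^2)$ along the layer, so the reduced operator on antisymmetric modes behaves, along the unbounded cone coordinate $\tau$, like $-\partial_\tau^2-C'/\tau^2$. In the regime $m+n\ge 8$ the cone's $|A|^2$ saturates the associated Hardy-type stability inequality, so the extra Toda contribution pushes the antisymmetric quadratic form into an infinite-dimensional negativity regime: a family $\{\mu^{(\ell)}\}_{\ell\ge1}$ with pairwise disjoint supports in annular regions $\tau\in[\tau_\ell,\tau_{\ell+1}]$, $\tau_\ell\to\infty$, each satisfies $\tilde Q(\mu^{(\ell)})<0$. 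Lifting them through the ansatz yields infinitely many $L^2$-orthogonal directions on which $Q_\eps<0$, hence infinite Morse index.

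The main obstacle is the Jacobi--Toda step in the degenerate regime $m=2$ or $n=2$, excluded from \cite{AKR1}: $J_{C_{m,n}}$ has finer asymptotics near the vertex of the cone and additional bounded kernel elements at infinity, so the weighted linear theory and matched asymptotic expansions must be carefully redone. The infinite-Morse-index argument is new but rests on a clean Hardy-type mechanism once the antisymmetric Jacobi--Toda form has been identified; the subtlety there lies in controlling the $O(\eps)$ remainders of the reduction well enough to preserve the inverse-square instability on the annular regions where the test functions live.
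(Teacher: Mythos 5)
Your overall strategy (gluing $k$ alternating copies of $v_\star$ along nearby interfaces, projecting to get a Jacobi--Toda system for the heights, solving it by weighted linear theory plus a fixed point, and detecting infinite Morse index through the reduced quadratic form with the Toda interaction) is the same as the paper's. However, there are two genuine gaps. First, you place the layers as normal graphs over the Lawson cone $C_{m,n}$ itself, with ``matched asymptotics near the vertex''. This cannot work as stated: the nodal components of the actual solutions are not graphs over the cone near the origin. In the paper they are normal graphs over the dilation $\eps^{-1}\Sigma$ of a \emph{smooth} minimal hypersurface $\Sigma=\Sigma^{-}_{m,n}$ asymptotic to $C_{m,n}$ (Theorem \ref{th_Sigma}); each component caps off around the axis $\{y=0\}$ at distance of order $\eps^{-1}$ from the origin and has the topology of $S^{m-1}\times\R^{n}$, not that of the punctured cone, so no global cone-graph ansatz (nor a purely vertex-local matching) can describe the core region. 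Moreover, the construction and the positivity $\pm\langle \pe,\nu_{\Sigma^\pm}\rangle>0$ of the Jacobi field of these surfaces is what drives the linear theory for the Jacobi--Toda system (Propositions \ref{prop_Jacobi_Sigma} and \ref{propinverselinearJT}); in the borderline case $\{m,n\}=\{2,6\}$ this existence/stability statement is a new result proved here by a phase-plane analysis, and your plan offers no substitute for it. You correctly flag $m=2$ or $n=2$ as the main obstacle, but you locate it in the weighted theory for $J_{C_{m,n}}$ rather than in the missing smooth core $\Sigma^{\pm}_{m,n}$.

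Second, your Morse index mechanism is the right one in spirit but the quantitative claim it rests on is wrong. You assert that the Toda balancing gives an interaction of size $C/(r^2+s^2)$ with an $\eps$-independent constant, and that since $|A_{C_{m,n}}|^2$ ``saturates'' the Hardy-type stability inequality any extra push produces infinitely many negative directions. In fact for $m+n\ge 8$ the stability gap $\big(\tfrac{N-2}{2}\big)^2-(N-1)$ is strictly positive and grows quadratically in $N$, so a bounded extra inverse-square potential need not create even one negative direction. What saves the argument (and what the paper uses in Section \ref{sec-Morse}) is that the Toda term carries the Lambert-function factor $w\sim \sigma=|\log\eps^2|$: $e^{-\sqrt2(\h_{j+1}-\h_j)}\sim \eps^{2}\,|A_\Sigma|^2\, w$, so the reduced potential is $|A_\Sigma|^2(1+\sigma)$ with $\sigma\to\infty$, the indicial roots become complex for $\eps$ small, the reduced ODE oscillates, and truncation between consecutive zeros yields the infinitely many disjointly supported test functions which are then lifted by $\chi_3\,\phi\, v_\star'$. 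Your antisymmetric-mode reduction is a reasonable variant, but without the $|\log\eps|$ amplification the ``saturation'' step fails, and this is precisely the point where the proof needs the quantitative structure of the Jacobi--Toda solution from Theorem \ref{main-th-JT}.
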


\begin{remark}
Theorem \ref{main-th-AC} generalizes the  results in \cite{AKR1}, by exhibiting solutions with nodal sets having $k\geq 2$ components diverging from the Lawson cone with logarithmic growth (see Theorems \ref{th_Sigma} and \ref{main-th-JT} below) and where either $n=2$ or $m=2$ are allowed. 

Moreover, the Morse index of these solutions, as well as the solutions in \cite{AKR1}, is infinite.

Another consequence of Theorem \ref{main-th-AC} is that in higher dimensions, the energy estimate (\ref{energy-est}) does not imply that the Morse index is finite.
\end{remark}

\begin{remark}
Let $u_{eps}$ be solution predicted in Theorem \ref{main-th-AC}. Then, $-u_\eps$ also satisfies properties ${\rm i. - iv.}$ in Theorem \ref{main-th-AC}. On the other hand, switching the roles of $m$ and $n$, it is possible to construct (in the same manner as for $u_{\eps}$ and $-u_{\eps}$) two more solutions $v_\eps$ and $-v_{\eps}$ of $O(n)\times O(m)$ invariant solutions to (\ref{eq_AC}) satisfies properties ${\rm i. - iv.}$ in Theorem \ref{main-th-AC}.

Composing with the reflection $\sigma(x,y)=(y,x)$, defined for $(x,y)\in\R^m\times\R^n$, we get two more families of solutions $\pm w_\eps:=\pm v_\eps\circ\sigma$ to (\ref{eq_AC}) fulfilling properties (\ref{geom-inv}), (\ref{k-end}) and (\ref{energy-estimate}) of Theorem \ref{main-th-AC}.
\end{remark}

In order to prove Theorem \ref{main-th-AC}, we first find an approximation of the candidate of nodal set. This candidate is in turn, based on suitable $O(m)\times O(n)$-invariant hypersurfaces, which are asymptotic to the Lawson cone $C_{m,n}$ at infinity, with $m,n\ge 2$, $m+n\ge 8$. Our next result deals with existence and stability of such surfaces.

Denote
$$
E^+_{m,n}:=\left\{(x,y)\in\R^m\times\R^n:\,(n-1)|x|^2<(m-1)|y|^2\right\}$$
$$E^-_{m,n}:=\left\{(x,y)\in\R^m\times\R^n:\,(n-1)|x|^2>(m-1)|y|^2\right\}.
$$

\begin{theorem}\label{th_Sigma}

Let $m,n\ge 2$, $m+n\ge 8$. Then there exist two unique minimal hypersurfaces $\Sigma^\pm_{m,n}\subset E^\pm_{m,n}$ asymptotic to the cone $C_{m,n}$ at infinity and satisfying that
\begin{enumerate} 
\item[(i)] $\Sigma^\pm_{m,n}$ are smooth;

\medskip
\item[(ii)]  $\Sigma^\pm_{m,n}$ are $O(m)\times O(n)$-invariant;

\medskip
\item[(iii)] ${\rm dist}(\Sigma^{\pm}_{m,n},\{0\})=1$;

\medskip
\item[(iv)] $\Sigma^\pm_{m,m}$ are stable.
\end{enumerate}
\end{theorem}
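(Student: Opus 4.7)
The plan is to use the $O(m)\times O(n)$-invariance to reduce the minimal surface equation to a one-dimensional geodesic problem in the first quadrant $Q=\{(r,s):r,s>0\}$ of the $(r,s)$-plane (where $r=|x|$ and $s=|y|$), equipped with the density $r^{m-1}s^{n-1}$. Parametrizing the generating curve $\gamma$ by arclength with unit tangent $(\cos\phi,\sin\phi)$, the minimality condition becomes the planar ODE
\begin{equation*}
\dot r=\cos\phi,\qquad \dot s=\sin\phi,\qquad \dot\phi=-\frac{m-1}{r}\sin\phi+\frac{n-1}{s}\cos\phi,
\end{equation*}
and smoothness of the rotated hypersurface forces $\gamma$ to meet the coordinate axes orthogonally. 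The Lawson cone $C_{m,n}$ corresponds to the scale-invariant equilibrium ray from the origin at the angle $\theta_0\in(0,\pi/2)$ with $\tan^2\theta_0=(n-1)/(m-1)$.

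To build $\Sigma^-_{m,n}\subset E^-_{m,n}$ I would shoot from $(r_0,0)$ with $\phi(0)=\pi/2$; the singular term $(n-1)/s$ near the axis is handled by a standard power-series expansion that simultaneously enforces orthogonality. Passing to the scale-invariant variables $\theta=\arctan(s/r)$, $\phi$, and the logarithmic time $\tau=\log\sqrt{r^2+s^2}$, the system becomes autonomous in $(\theta,\phi)$ with an asymptotically stable fixed point at $(\theta_0,\theta_0)$. A phase-plane analysis shows that the trajectory emerging from the axis enters the basin of attraction of this point, so $\gamma$ converges to $C_{m,n}$, while the monotonicity of $\theta$ along the flow keeps it in the region corresponding to $E^-_{m,n}$. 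The scaling symmetry $(r,s)\mapsto(\lambda r,\lambda s)$ of the ODE then rescales $r_0$ so that $\mathrm{dist}(\Sigma^-_{m,n},0)=1$. The surface $\Sigma^+_{m,n}$ is constructed analogously by shooting from the $s$-axis. For uniqueness, I would use a sliding argument: given two candidates $\Sigma_1,\Sigma_2$, the smallest $\lambda\ge 1$ for which $\lambda\Sigma_1$ lies above $\Sigma_2$ produces, by the common asymptote to $C_{m,n}$, an interior tangential contact; the strong maximum principle for minimal hypersurfaces then forces $\lambda\Sigma_1=\Sigma_2$, whence $\lambda=1$ by normalization.

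For stability in the case $m=n$, the plan is to exhibit the dilation family $\{\lambda\Sigma^\pm_{m,m}\}_{\lambda>0}$ as a smooth foliation of $E^\pm_{m,m}\setminus\{0\}$ by minimal hypersurfaces and then invoke the classical Fischer-Colbrie-Schoen criterion. Nesting near infinity follows from the asymptotic approach to $C_{m,m}$, and nesting near the axis follows from the orthogonality condition; to propagate nesting globally I would use the additional reflection symmetry $(x,y)\leftrightarrow(y,x)$ available precisely when $m=n$, via a maximum-principle/reflection argument ruling out interior tangential contact between distinct leaves. Differentiating the foliation at $\lambda=1$ then produces the Jacobi field $\eta=X\cdot\nu$ with $X$ the position vector field; the foliation property gives $\eta$ a definite sign, and the existence of such a positive Jacobi field implies stability of $\Sigma^\pm_{m,m}$.

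The main obstacles I expect are the phase-plane analysis for the shooting (showing the trajectory does not exit $Q$ or cross $C_{m,n}$, which is especially delicate in the new regime $\min\{m,n\}=2$ where the axis geometry degenerates), and the global propagation of the dilation foliation in the stability step, where the reflection symmetry of the diagonal case $m=n$ is essential and underlies the restriction in statement $(iv)$.
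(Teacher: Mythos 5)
The genuine gap is in your treatment of stability, item (iv). You prove it only for $m=n$, invoking the reflection symmetry $(x,y)\leftrightarrow(y,x)$ to propagate the nesting of the dilation family, and you assert that this symmetry is ``essential'' and explains the subscript in the statement. The subscript $\Sigma^\pm_{m,m}$ is in fact a typo for $\Sigma^\pm_{m,n}$: the paper states immediately after the theorem that its main new content is precisely stability in the asymmetric cases $(m,n)=(2,6)$ and $(6,2)$, and stability for general $m,n\ge 2$, $m+n\ge 8$ is what the rest of the construction uses. So as written your argument fails to prove the theorem in the cases that matter most. Moreover, the restriction to $m=n$ is unnecessary, and you already hold the key fact: the monotonicity of $\theta$ along the shooting orbit, which you use to keep the curve inside $E^\pm_{m,n}$, is literally equivalent to the statement that $\pm\langle \pe,\nu_{\Sigma^\pm_{m,n}}(\pe)\rangle>0$ on $\Sigma^\pm_{m,n}$ (in the paper's notation, $u'=(b'a-a'b)/(a^2+b^2)$ while $\langle\pe,\nu\rangle=-ab'+ba'$), i.e.\ that the position field produces a \emph{positive Jacobi field}, equivalently that the dilations $\{\lambda\Sigma\}_{\lambda>0}$ are nested, for every admissible $m,n$ with no reflection needed. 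A positive Jacobi field yields stability directly via the substitution $w=\log u$ and integration by parts (the paper's Remark 2.1), so the foliation/Fischer--Colbrie--Schoen machinery and the diagonal restriction can simply be dropped once the phase-plane monotonicity is established.

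On the remaining points: your reduction to the scale-invariant planar system is essentially the paper's proof (the change of variables $\tan u=a/b$, $\tan v=a'/b'$ makes the problem autonomous; the equilibria are the axis-orthogonal points and the cone direction, and $\Sigma^\pm_{m,n}$ are the heteroclinic orbits joining them). The step you defer as an ``expected obstacle'' --- that the orbit never crosses the cone direction, i.e.\ the monotonicity of $\theta$ --- is exactly the paper's Lemma 2.3, proved via the eigenvector directions at the two equilibria, Grobman--Hartman, and a Bendixson-type divergence argument excluding periodic orbits; note this is also where $m+n\ge 8$ enters, since it makes the cone equilibrium a node (real eigenvalues) rather than a spiral, and for $m+n<8$ the orbit would oscillate across $C_{m,n}$. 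Your sliding argument for uniqueness is a genuinely different route from the paper, which obtains uniqueness from the complete classification of orbits; it is plausible, but you must rule out that the first contact of $\lambda\Sigma_1$ with $\Sigma_2$ occurs only at infinity, which requires the refined asymptotics of the ends toward the cone (the expansion ${\tt w}^\pm(\xi)=r^{\gamma}(c_\pm+O(r^{-\alpha}))$ recorded in the paper's Remark 2.1).
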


Theorem \ref{th_Sigma} is a generalisation of Theorem $1.1$ in \cite{AKR1}, which also deals with the cases $n=2$ and $m=6$ or $n=6$ and $m=2$. If $m+n\ge 9$ and $m,n\ge 2$ or $m+n=8$ and $m,n\ge 3$, stability follows from Theorem $2.1$ of \cite{HS}, which also provides the construction of such surfaces under the hypothesis that the tangent cone is minimising. The main contribution of  Theorem \ref{th_Sigma} is the proof of existence and stability in the cases $m=2$ and $n=6$ or $m=6$ and $n=2$, which is a non trivial new result. 


The next result provides the sets that will serve as main approximation of the nodal set of the solutions predicted in Theorem \ref{main-th-AC}.

\begin{theorem}
\label{main-th-JT}
Let {$m,\,n\ge 2$}, $m+n\ge 8$  and let $\gamma>0$. Let $\Sigma$ denote one of the two minimal hypersurfaces constructed in Theorem \ref{th_Sigma}. Then there exists $\delta_*>0$ small such that if $0<\delta \leq \delta_*$, the system of equations
\begin{equation}
\label{JT-system}
\delta \big(\Delta_\Sigma \h_j+|A_\Sigma|^2 \h_j\big)=  \gamma (e^{-\sqrt{2}(\h_j-\h_{j-1})}-e^{-\sqrt{2}(\h_{j+1}-\h_j)}),\qquad 1\le j\le k
\end{equation}
has a smooth solution $\h:=(\h_1,\dots,\h_{k})$ which is $O(m)\times O(n)$-invariant and such that
\begin{equation}
\label{disj-graph}
-\infty\equiv \h_0<\h_1<\h_2<\cdots<\h_k<\h_{k+1}\equiv+\infty.
\end{equation}
\end{theorem}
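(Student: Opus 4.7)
The plan is to use the smallness of $\delta$ together with the stability of $\Sigma$ (Theorem \ref{th_Sigma}(iv)) to solve \eqref{JT-system} by a fixed-point argument around a carefully chosen approximate solution. By the $O(m)\times O(n)$-invariance of $\Sigma$, each $\h_j$ is a function of a single parameter along the generating curve of $\Sigma$, so \eqref{JT-system} reduces to a system of $k$ coupled second-order ODEs.

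First, I would construct an ordered approximate solution $\h^0=(\h^0_1,\ldots,\h^0_k)$. On the Lawson cone $C_{m,n}$ one has $|A|^2\sim c/r^2$ at infinity (with $r$ the distance to the origin), and on $\Sigma$ the same asymptotics persist. A natural ansatz is $\h^0_j=\bigl(j-\tfrac{k+1}{2}\bigr)G+c_j$, where $G$ is a positive $O(m)\times O(n)$-invariant solution of the linearized problem $L_\Sigma G:=\Delta_\Sigma G+|A_\Sigma|^2 G=0$ with logarithmic growth at infinity; the existence of such a $G$ follows from ODE analysis on the generating curve, with stability ruling out oscillations. With this choice the spacings $\h^0_{j+1}-\h^0_j=G$ grow, so the Toda terms $e^{-\sqrt{2}(\h^0_{j+1}-\h^0_j)}$ decay like a power of $r$, and the error of the approximate solution, measured in a suitable weighted H\"older norm, can be made of order $\delta$ by tuning the constants $c_j$ or by adding a further correction solving a linear problem.

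Second, I would develop a linear theory for $L_\Sigma$ on $O(m)\times O(n)$-invariant functions in weighted H\"older spaces $C^{2,\alpha}_\nu(\Sigma)$ with weight $r^{-\nu}$ at infinity, choosing $\nu>0$ to avoid the indicial roots of $L_\Sigma$ on the asymptotic Lawson cone. Stability of $\Sigma$ combined with the symmetry, which removes the translational Jacobi fields, yields the a priori estimate $\|\phi\|_{C^{2,\alpha}_\nu}\leq C\|L_\Sigma\phi\|_{C^{0,\alpha}_{\nu+2}}$, possibly after a Lyapunov--Schmidt reduction that handles the residual symmetric Jacobi field. Writing $\h=\h^0+\phi$, the system becomes $\delta L_\Sigma\phi=E[\h^0]+N[\phi]$, where $N[\phi]$ is a smooth Lipschitz perturbation on small balls because the Toda exponentials depend smoothly on differences of heights that remain large along $\h^0$. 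The Banach fixed-point theorem in $C^{2,\alpha}_\nu$ then produces a small $\phi$, and the strict ordering \eqref{disj-graph} is inherited from $\h^0$ by smallness of $\phi$ in the uniform norm.

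The main obstacle will be the linear theory for $L_\Sigma$ on the non-compact surface $\Sigma$: one must identify the correct weighted spaces, verify that the chosen $\nu$ avoids the indicial roots of the Lawson cone at infinity, and control the at-most-one-dimensional space of $O(m)\times O(n)$-invariant Jacobi fields via a Lyapunov--Schmidt reduction that introduces a free additive parameter for the overall translation of the heights. A secondary difficulty is the delicate balance in the ansatz: the coefficients of $G$ must be compatible with the endpoint Toda terms at $j=1$ and $j=k$ (where the boundary conventions $\h_0\equiv-\infty$ and $\h_{k+1}\equiv+\infty$ make the nonlinearity one-sided), which may force a more refined choice $\h^0_j=c_j^{(1)}G+c_j^{(0)}$ with $c_j^{(1)}$ determined by a linear algebraic problem. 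Once this linear and algebraic structure is set up, the smallness of $\delta$ converts the nonlinear problem into a routine contraction.
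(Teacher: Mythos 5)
Your plan has two genuine gaps, and they are exactly the places where the paper has to work hardest. First, the ansatz: there is no $O(m)\times O(n)$-invariant solution $G$ of $\Delta_\Sigma G+|A_\Sigma|^2G=0$ with logarithmic growth. Since $|A_\Sigma|^2\sim (N-1)s^{-2}$ and $N\ge 7$, the invariant Jacobi fields behave like $s^{\gamma^\pm}$ with both indicial roots $\gamma^\pm=-\frac{N-2}{2}\pm\sqrt{(\frac{N-2}{2})^2-(N-1)}$ real and negative (this is Proposition \ref{prop_Jacobi_Sigma}); stability rules out oscillation but forces decay, not log growth. Consequently with your $\h^0_j=(j-\frac{k+1}{2})G+c_j$ the spacings tend to constants at infinity, the Toda exponentials do not decay like $s^{-2}$, and the error cannot be made small in any weight comparable to $|A_\Sigma|^2$; tuning the additive constants $c_j$ cannot repair this. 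The correct leading profile is not a Jacobi field at all but the solution of the pointwise algebraic balance $\delta|A_\Sigma|^2 w=\gamma e^{-\sqrt{2}w}$, i.e.\ the Lambert-function profile $w(s)=\frac{1}{\sqrt2}W\bigl(\frac{\gamma\sqrt2}{\delta\beta(s)}\bigr)\sim\frac{1}{\sqrt2}\bigl(2\log s+|\log\delta|\bigr)$, which builds the necessary $\delta$-dependence into the spacings so that $e^{-\sqrt2(\h^0_{j+1}-\h^0_j)}\sim \delta s^{-2}$ matches the left-hand side.

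Second, the linear theory you propose (invert $L_\Sigma=\Delta_\Sigma+|A_\Sigma|^2$ via stability and treat the Toda terms as a Lipschitz-small perturbation) does not close. Around the correct approximate solution the linearization of the Toda terms is $\gamma\sqrt2\,e^{-\sqrt2(\h^0_{j}-\h^0_{j-1})}\approx \sqrt2\,\delta|A_\Sigma|^2 w$ acting on differences of the corrections, so after dividing by $\delta$ the effective linear operator is $\Delta_\Sigma+|A_\Sigma|^2\bigl(1+\sqrt2\,\mu^0_j\,\w+\cdots\bigr)$ with a potential of size $|A_\Sigma|^2|\log\delta|$ that dominates the Jacobi potential; if you leave it in the nonlinearity, the Lipschitz constant of the fixed-point map relative to $(\delta L_\Sigma)^{-1}$ blows up like $|\log\delta|$, and if you absorb it into the linear operator, stability is of no help because the potential now has the \emph{wrong} sign for positivity (this is in fact the source of the infinite Morse index). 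The paper's route is to decouple the system through the difference/sum matrix $B$, diagonalize the resulting tridiagonal matrix with positive eigenvalues, and then prove Proposition \ref{propinverselinearJT} by a three-regime ODE analysis (Emden--Fowler variables, Lyapunov energy, Gronwall), obtaining an inverse whose norm degenerates like $\sigma^{3/4}\log\sigma$ with $\sigma=|\log\delta|$; the contraction then only closes because the approximate solution is refined iteratively (Lemma \ref{lemmaapproximateslnJacToda}) to order $l>7$, making the error $O(\sigma^{-(l-1)/2})$. Neither the loss in the linear inverse nor the high-order refinement appears in your outline, and your "error of order $\delta$ plus Banach fixed point in $C^{2,\alpha}_\nu$" would not survive these obstructions. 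The Lyapunov--Schmidt step for a residual invariant Jacobi field that you anticipate is not needed, and is not where the difficulty lies.
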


Observe that property (\ref{disj-graph}) in Theorem \ref{main-th-JT} guarantees that the graphs of the $\h_j$ are mutually disjoint.

The idea of the proof of Theorem \ref{main-th-AC} is that  the connected components of the zero level set of the solutions to (\ref{eq_AC}) are close to the graphs of the $\h_j$'s. The proof relies on an infinite dimensional Lyapunov-Schmidt reduction.

The paper is organised as follows. In Section 2 we prove Theorem \ref{th_Sigma} and study the Jacobi fields of the predicted surfaces. In Section 3 we prove Theorem \ref{main-th-JT} and study invertibility of its associated linearezed operator. In Section 4 we prove Theorem \ref{main-th-AC} and leave the calculations of the Morse Index of the solutions to Section 5.

\section{The surfaces $\Sigma^\pm_{m,n}$: construction and Jacobi fields}

The aim of this Section is to prove Theorem \ref{th_Sigma}, that is to construct the surfaces $\Sigma^\pm_{m,n}$, and to study the asymptotic behaviour at infinity of their Jacobi fields. We will actually prove more than Theorem \ref{th_Sigma}, that is we will prove the following Theorem.

\begin{theorem}\label{Th_Sigma_detailed}
Let $m,n\ge 2$, $m+n\ge 8$. Then there exist two unique minimal hypersurfaces $\Sigma^\pm_{m,n}\subset E^\pm_{m,n}$ asymptotic to the cone $C_{m,n}$ at infinity and satisfying properties (i), (ii) and (iii) of Theorem \ref{th_Sigma}. Moreover
\begin{equation}
\label{stab+}
\langle \pe,\nu_{\Sigma^+_{m,n}}(\pe)\rangle >0\qquad\forall\,\pe\in\Sigma^+_{m,n}
\end{equation}
and 
\begin{equation}
\label{stab-}
\langle \pe,\nu_{\Sigma^-_{m,n}}(\pe)\rangle <0\qquad\forall\,\pe\in\Sigma^-_{m,n},
\end{equation}
being $\nu_{\Sigma^\pm_{m,n}}(\pe)$ the unit normal to $\Sigma^\pm_{m,n}$ at $\pe$ pointing towards $E^+_{m,n}$. 
\end{theorem}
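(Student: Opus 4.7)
The plan is to use the $O(m)\times O(n)$-invariance to reduce the problem to a planar ODE and then carry out a phase-plane shooting argument. Any $O(m)\times O(n)$-invariant hypersurface is determined by its generating curve in the quadrant $Q=\{(r,s):r,s\geq 0\}$ via $(x,y)\mapsto(|x|,|y|)$. Parametrizing this curve by arclength as $\tau\mapsto(r(\tau),s(\tau))$ with tangent angle $\theta$, $(r',s')=(\cos\theta,\sin\theta)$, the minimality condition $H_\Sigma\equiv 0$ is equivalent to
\begin{equation*}
r'=\cos\theta,\qquad s'=\sin\theta,\qquad \theta'=(m-1)\frac{\sin\theta}{r}-(n-1)\frac{\cos\theta}{s}.
\end{equation*}
The Lawson cone $C_{m,n}$ is the ray of constant angle $\theta_c$ with $\tan^2\theta_c=(n-1)/(m-1)$.

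Next, I would introduce polar coordinates $r=\rho\cos\phi,\,s=\rho\sin\phi$ and the logarithmic time $t=\log\rho$, which turns the system into an autonomous planar flow in $(\phi,\theta)\in(0,\pi/2)\times\R$. The cone corresponds to the equilibrium $P_c=(\phi_c,\phi_c)$ with $\phi_c=\theta_c$. A direct Jacobian computation shows that $P_c$ is a hyperbolic saddle for every $m,n\geq 2$, with a one-dimensional stable manifold $W^s(P_c)$. Construction of $\Sigma^+_{m,n}$ reduces to the selection of a trajectory in $W^s(P_c)$ leaving the $s$-axis orthogonally: for each $s_0>0$ follow the trajectory with initial data $(r(0),s(0),\theta(0))=(0,s_0,0)$ (the orthogonality $\theta(0)=0$ guarantees smoothness of the rotated hypersurface at $\{r=0\}$). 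Partition the parameter $s_0$ according to long-time behaviour into bowl-type trajectories (hitting the $r$-axis), catenoid-type trajectories (escaping to infinity in $E^+_{m,n}$), and cone-asymptotic trajectories. Barrier comparisons show that the bowl-type set is non-empty and open for small $s_0$ and the catenoid-type set is non-empty and open for large $s_0$, so by connectedness there exists $s_0^*$ whose trajectory lies in $W^s(P_c)$; rescaling in $t$ to impose $\mathrm{dist}(\Sigma^+,0)=1$ singles out $\Sigma^+_{m,n}$. The construction of $\Sigma^-_{m,n}$ is symmetric, starting from the $r$-axis, and uniqueness follows from the one-dimensionality of $W^s(P_c)$ combined with the dilation invariance of the ODE.

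For the star-shapedness conditions \eqref{stab+} and \eqref{stab-}, I would use the identity
\begin{equation*}
\langle \pe,\nu_\Sigma(\pe)\rangle \;=\; s\cos\theta-r\sin\theta \;=\; \rho\sin(\phi-\theta),
\end{equation*}
valid for the normal $\nu_\Sigma$ obtained by rotating the planar normal $(-\sin\theta,\cos\theta)$ under $O(m)\times O(n)$. On $\Sigma^+_{m,n}$, at the axis point $(0,s_0^*)$ the right hand side equals $s_0^*>0$. Along the trajectory in $W^s(P_c)$, $\phi-\theta$ stays strictly positive: a transversal zero at some interior $\tau$ would force the trajectory to cross the cone into $E^-_{m,n}$, contradicting its confinement to the stable manifold of $P_c$ from the $E^+$-side. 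Hence \eqref{stab+} holds, and the analogous argument for $\Sigma^-_{m,n}$ yields \eqref{stab-}.

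The main obstacle is handling the borderline cases $(m,n)\in\{(2,6),(6,2)\}$ that are not covered by \cite{HS}: in these cases $C_{m,n}$ is stable but not area-minimising, so the approach via approximation by area-minimising currents is unavailable. The phase-plane shooting above is purely ODE-theoretic and should produce the hypersurfaces in all cases $m+n\geq 8$, but controlling the approach to $P_c$ along $W^s(P_c)$, ruling out pathological non-embedded trajectories in the shooting partition, and verifying smoothness at the axis without appealing to the regularity theory of minimisers together form the delicate technical core of the proof; this is where genuinely new work, going beyond \cite{HS,AKR1}, is required.
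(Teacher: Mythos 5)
Your overall strategy --- reduce to the profile-curve ODE and study an autonomous planar system in the two angles (position angle $\phi$ and tangent angle $\theta$) --- is exactly the paper's route (its variables $\tan u=a/b$, $\tan v=a'/b'$ following \cite{ABPRS}), but the central dynamical claim on which your construction and uniqueness rest is false. With $(r',s')=(\cos\theta,\sin\theta)$, minimality reads $\theta'=(n-1)\frac{\cos\theta}{s}-(m-1)\frac{\sin\theta}{r}$; your $\theta$-equation is the negative of this, and that is not a time reversal of the full system, which is presumably why your ``direct Jacobian computation'' produced a saddle. With the correct sign, the linearization of the flow $\frac{d\phi}{dt}=\tan(\theta-\phi)$, $\frac{d\theta}{dt}=\frac{(n-1)\cot\phi\,\cos\theta-(m-1)\tan\phi\,\sin\theta}{\cos(\theta-\phi)}\cdot\frac{\sin\phi\cos\phi}{\sin\phi\cos\phi}$ (time $t=\log\rho$) at $P_c=(\phi_c,\phi_c)$ has eigenvalues $\gamma^\pm-1$, with $\gamma^\pm$ as in Section 2: these are real precisely when $m+n\ge 8$, so $P_c$ is a two-dimensional node in that range and a focus when $m+n\le 7$ (this is the paper's formula $\mu_{1,2}=\frac{\sin\bar\alpha\cos\bar\alpha}{2}(N\pm\sqrt{N^2-8N+8})$ for $p_6$, $p_7$). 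It is never a hyperbolic saddle with one-dimensional stable manifold. This is exactly where the hypothesis $m+n\ge 8$ must enter: in the focus case every orbit tending to $P_c$ spirals, so the profile curve crosses the cone infinitely often and no surface asymptotic to $C_{m,n}$ can stay inside $E^\pm$. Your argument never uses $m+n\ge 8$ and would ``prove'' the theorem in low dimensions --- a clear warning sign --- and your uniqueness argument, resting on the alleged one-dimensionality of $W^s(P_c)$, collapses because that manifold is two-dimensional.

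Two further steps do not hold as written. First, the shooting in $s_0$ is vacuous: the minimal surface system is invariant under $(r,s)\mapsto\lambda(r,s)$, so all trajectories leaving the $s$-axis orthogonally are dilations of a single orbit, and the sets ``bowl-type / catenoid-type / cone-asymptotic'' cannot vary with $s_0$; there is nothing to apply connectedness to. What is actually required --- and is the content of the paper's proof --- is a global phase-plane analysis of that single orbit: exclusion of periodic orbits by a divergence (Bendixson-type) argument as in Lemma~\ref{lemma-per-orb}, classification of the heteroclinic connections, and identification of $\Sigma^\pm_{m,n}$ with the connections joining the saddles $p_2$, $p_4$ (the axis-crossing points) to the cone node; uniqueness then comes from the one-dimensionality of the relevant invariant manifolds of those axis saddles together with the normalization ${\rm dist}(\Sigma^\pm_{m,n},\{0\})=1$, not from $W^s(P_c)$. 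Second, your star-shapedness argument has a gap: $\langle\pe,\nu_\Sigma(\pe)\rangle=\rho\sin(\phi-\theta)$ vanishes when the orbit touches the line $\{\phi=\theta\}$, which is not the cone $\{\phi=\phi_c\}$, so a zero of $\phi-\theta$ does not force a crossing of $C_{m,n}$ and no contradiction arises the way you claim. Keeping $\phi-\theta$ of one sign along the whole connection is precisely the paper's Lemma~\ref{lemma-stability}, and it needs the confinement argument given there (eigendirections at both equilibria plus the impossibility of meeting the family of orbits in $W^u(p_6)\cap W^s(p_7)$).
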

In the sequel, we will set $\Sigma:=\Sigma^-_{m,n}$ and $E^+:=E^+_{m,n}$. We set
$$\gamma^\pm=-\frac{N-2}{2}\pm\sqrt{\left(\frac{N-2}{2}\right)^2-(N-1)}.$$
\begin{remark}
\label{rem-Sigma}
\begin{enumerate}
\item The results in \cite{AA} and \cite{S} imply that, outside a ball, $\Sigma^\pm_{m,n}$ are normal graphs over the cone $C_{m,n}$ of $O(m)\times O(n)$-invariant functions ${\tt w}^\pm:C_{m,n}\to\R$ such that ${\tt w}^+>0$ and ${\tt w}^-<0$. Moreover, Theorem $4.2$ of \cite{PW} implies that the asymptotic behaviour at infinity of ${\tt w}^\pm$ is given by
$${\tt w}^\pm(\xi)=r^{\gamma}(c_\pm+O(r^{-\alpha})),\qquad\gamma\in\{\gamma^\pm\}, r:=|\xi|,\,\xi\in C_{m,n},\, c_+>0,\,c_-<0$$
where $\alpha>0$ and this relation can be differentiated.
\item If a minimal surface $\Sigma$ has a positive Jacobi field $u>0$, then it is stable. In fact, setting $w:=\log u$, we have
$$\Delta_\Sigma w=\frac{\Delta_\Sigma u}{u}-|\nabla_\Sigma w|^2=-|A_\Sigma|^2-|\nabla_\Sigma w|^2.$$
Taking a test function $\varphi\in C^\infty_c(\Sigma)$, multiplying by $\varphi^2$ and integrating by parts, we have
\begin{equation}\notag
\begin{aligned}
\int_\Sigma|A_\Sigma|^2\varphi^2&=-\int_{\Sigma}|\nabla_\Sigma w|^2\varphi^2+2\int_\Sigma \varphi \nabla w\cdotp\nabla\varphi\\
&\le-\int_{\Sigma}|\nabla_\Sigma w|^2\varphi^2+2\int_\Sigma |\varphi||\varphi \nabla w|\,|\nabla\varphi|\\
&\le-\int_{\Sigma}|\nabla_\Sigma w|^2\varphi^2+\int_\Sigma \varphi^2|\varphi \nabla w|^2+\int_\Sigma |\nabla\varphi|^2=\int_\Sigma |\nabla\varphi|^2.
\end{aligned}
\end{equation}
\item Point (iv) of Theorem \ref{th_Sigma} follows from Theorem \ref{Th_Sigma_detailed}. In fact $\Sigma^\pm_{m,n}$ have a positive Jacobi field given by the function $\pe\in\Sigma^\pm_{m,n}\mapsto\pm\langle \pe,\nu_{\Sigma^\pm_{m,n}}(\pe)\rangle$, so in particular $\Sigma^\pm_{m,n}$ are stable.
\end{enumerate}
\end{remark}

\subsection{Proof of Theorem \ref{Th_Sigma_detailed}}
We look for $O(m)\times O(n)$-invariant smooth surfaces, that is surfaces generated by a smooth, regular curve $\Upsilon:\R \to \R^2$, $\Upsilon(s):=(a(s),b(s))$ in the half-plane
$$
Q:= \{(a,b)\in \R^2\,:\, a>0\} 
$$
In other words, we want to construct surfaces $\Sigma$ such that, for any $\pe\in\Sigma$, there exists a unique $(s,\x,\y)=(s(\pe),\x(\pe),\y(\pe))\in\R\times S^{m-1}\times S^{n-1}$ such that
\begin{equation}
\label{param_O(m)O(n)_invariant}
\pe:=\left(a(s){\tt x},b(s){\tt y}\right).
\end{equation}
If we parametrise $\Upsilon$ by arc length, that is we impose the condition $(a')^2+(b')^2=1$, then the zero mean curvature equation translates into the ODE
\begin{equation}
\label{H=0-inv}
-a''b'+a'b''+(m-1)\frac{b'}{a}+(n-1)\frac{a'}{b}=0.
\end{equation}
Now we introduce the change of variables
\begin{equation}
\label{variables-uv}
\tan u=\frac{a}{b}\qquad\tan v=\frac{a'}{b'}
\end{equation}
also used in \cite{ABPRS}, so that equation (\ref{H=0-inv}) is equivalent to the dynamical system
\begin{equation}
\label{dyn-syst-uv}
\begin{aligned}
u'&=X_1(u,v)\\
v'&=X_2(u,v)
\end{aligned}
\end{equation}
where the vector field $X(u,v)=(X_1(u,v),X_2(u,v))$ is given by
\begin{eqnarray}
\begin{aligned}
X_1(u,v)&:=\cos u\sin u \sin(u-v)\\
X_2(u,v)&:=(m-1)\sin u\sin v-(n-1)\cos u\cos v
\end{aligned}
\end{eqnarray}
The aim now is to study the phase plane associated to vector field $X$, in order to determine all the associated orbits. By periodicity, we can reduce ourselves to consider this vector field in the rectangle $D:=(0,\frac{\pi}{2})\times(-\pi,\pi)$. Since $\bar{D}$ is compact and $X$ is continuous, it follows that every orbit is complete, that is defined in all $\R$. We note that
\begin{equation}
\label{sign-X_1}
\begin{aligned}
\{(u,v)\in\bar{D}:\,X_1(u,v)=0\}&=\{(u,v)\in\bar{D}:\,v=u-k\pi,\,k\in\{0,1\}\}\cup(\{0\}\times[-\pi,\pi])\cup
(\{\frac{\pi}{2}\}\times[-\pi,\pi])\\
\{(u,v)\in\bar{D}:\,X_1(u,v)>0\}&=\{(u,v)\in\bar{D}:\,u-\pi<v<u\}\\
\{(u,v)\in\bar{D}:\,X_1(u,v)<0\}&=\{(u,v)\in\bar{D}:\,v<u-\pi\}\cup\{(u,v)\in\bar{D}:\,v>u\}\\
\end{aligned}
\end{equation}
Similarly, setting
$$\bar{v}(u):=\arctan\left(\frac{n-1}{m-1}\cot u\right),$$
we have
\begin{equation}
\label{sign-X_2}
\begin{aligned}
\{(u,v)\in\bar{D}:\,X_2(u,v)=0\}&=\{(u,v)\in\bar{D}:\,v=\bar{v}(u)-k\pi,\,k\in\{0,1\}\}\\
\{(u,v)\in\bar{D}:\,X_2(u,v)>0\}&=\{(u,v)\in\bar{D}:\,v<\bar{v}(u)-\pi\}\cup\{(u,v)\in\bar{D}:\,v>\bar{v}(u)\}\\
\{(u,v)\in\bar{D}:\,X_2(u,v)<0\}&=\{(u,v)\in\bar{D}:\,\bar{v}(u)-\pi<v<\bar{v}(u)\}\\
\end{aligned}
\end{equation}
As a consequence, the singular points of $X$ in $\bar{D}$ are given by
\begin{equation}
p_1:=(0,-\frac{\pi}{2}),\, p_2:=(0,\frac{\pi}{2}),\, p_3:=(\frac{\pi}{2},-\pi),\, p_4=(\frac{\pi}{2},0),\,p_5=(\frac{\pi}{2},\pi),\,p_6:=(\bar{\alpha},\bar{\alpha}),\,p_7:=(\bar{\alpha},\bar{\alpha}-\pi),
\end{equation}
where $\bar{\alpha}:=\arctan\left(\sqrt\frac{n-1}{m-1}\right)$ is such that $p_6:=(\bar{\alpha},\bar{\alpha})$ is the intersection between the straight line $v=u$ and the graph of $\bar{v}$.
\begin{lemma}
For any integers $m,n\ge 2$, $m+n\ge 8$, the points $p_j$ are saddle points for $1\le j\le 5$, $p_6$ is an unstable node and $p_7$ is a stable node.
\end{lemma}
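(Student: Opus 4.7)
The plan is to linearize the vector field $X=(X_1,X_2)$ at each singular point and classify the critical point by inspecting the trace, determinant and discriminant of the Jacobian $DX(p_j)$. Since $X$ is smooth and polynomial in $\sin u,\cos u,\sin v,\cos v$, the derivatives to be computed are
\begin{align*}
\partial_u X_1 &= \cos(2u)\sin(u-v)+\tfrac{1}{2}\sin(2u)\cos(u-v),\\
\partial_v X_1 &= -\tfrac{1}{2}\sin(2u)\cos(u-v),\\
\partial_u X_2 &= (m-1)\cos u\sin v+(n-1)\sin u\cos v,\\
\partial_v X_2 &= (m-1)\sin u\cos v+(n-1)\cos u\sin v.
\end{align*}

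First I would handle the five boundary singular points $p_1,\dots,p_5$. At each of them one of the pairs $(\sin u,\cos u)$ or $(\sin v,\cos v)$ takes a value in $\{0,\pm 1\}$, which makes $DX(p_j)$ lower-triangular. For instance, at $p_1=(0,-\pi/2)$ one checks directly that $DX(p_1)$ has eigenvalues $1$ and $-(n-1)$, which are of opposite sign for any $n\ge 2$; the same computation at $p_2,p_3,p_4,p_5$ yields eigenvalue pairs in $\{\pm 1, \pm(m-1), \pm(n-1)\}$ of opposite signs. Hence each $p_j$, $1\le j\le 5$, is a saddle.

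The interesting points are $p_6=(\bar\alpha,\bar\alpha)$ and $p_7=(\bar\alpha,\bar\alpha-\pi)$. Using $\sin\bar\alpha=\sqrt{(n-1)/(m+n-2)}$, $\cos\bar\alpha=\sqrt{(m-1)/(m+n-2)}$, and $\sin(u-v)=0$, $\cos(u-v)=\pm 1$, the Jacobian at $p_6$ takes the form
\[
DX(p_6)=\begin{pmatrix} a & -a \\ b & b\end{pmatrix},\qquad a:=\frac{\sqrt{(m-1)(n-1)}}{m+n-2},\quad b:=\sqrt{(m-1)(n-1)},
\]
with trace $a+b>0$ and determinant $2ab>0$, so both eigenvalues have positive real part. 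At $p_7$ the corresponding matrix differs by an overall sign in a way that gives trace $-(a+b)<0$ and the same determinant $2ab>0$, so both eigenvalues have negative real part.

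The remaining step, which is the main obstacle, is to verify that these two critical points are \emph{nodes} rather than spirals, i.e.\ that the discriminant of the characteristic polynomial is non-negative. This discriminant equals
\[
(a+b)^2-8ab \;=\; a^2\bigl(r^2-6r+1\bigr),\qquad r:=\frac{b}{a}=m+n-2.
\]
The polynomial $r^2-6r+1$ is non-negative precisely when $r\ge 3+2\sqrt{2}\approx 5.83$, and under the hypothesis $m+n\ge 8$ we have $r\ge 6>3+2\sqrt{2}$, so the discriminant is strictly positive and the two eigenvalues are real and of the same sign. Combined with the trace computations, this identifies $p_6$ as an unstable node and $p_7$ as a stable node, completing the classification.
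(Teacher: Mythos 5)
Your proof is correct and follows exactly the route the paper uses, which is simply to cite Proposition 3.4 of \cite{ABPRS} ("an explicit computation of the Jacobian matrix of $X$ at $p_j$"): your Jacobians give opposite-sign eigenvalues at $p_1,\dots,p_5$, and at $p_6$, $p_7$ your trace/determinant/discriminant computation reproduces the eigenvalues $\frac{\sin\bar\alpha\cos\bar\alpha}{2}(N\pm\sqrt{N^2-8N+8})$ displayed later in the paper, with $N^2-8N+8>0$ (equivalently your $r=m+n-2\ge 3+2\sqrt{2}$) being precisely where the hypothesis $m+n\ge 8$ enters to rule out spirals. No gaps; your write-up just makes the cited computation explicit.
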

For the proof, see Proposition $3.4$ of \cite{ABPRS}. This is based on an explicit computation of the Jacobian matrix of $X$ at $p_j$.
\begin{lemma}
\label{lemma-per-orb}
Let $m,n\ge 2$, $m+n\ge 8$. Then the dynamical system (\ref{dyn-syst-uv}) admits no periodic orbits in $\bar{D}$.
\end{lemma}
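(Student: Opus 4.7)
My plan combines the Poincar\'e index theorem with a Bendixson--Dulac criterion. First, I would note that $X_1\equiv 0$ on the vertical boundaries $u=0$ and $u=\pi/2$, so these edges are flow-invariant and any periodic orbit lies in the open interior of $\bar D$. Since $X$ is $2\pi$-periodic in $v$, it is natural to view the phase space as the cylinder $(0,\pi/2)\times S^1$. To rule out closed orbits winding non-trivially around $S^1$, I would observe that
\begin{equation*}
X_2(u,0)=-(n-1)\cos u<0, \qquad X_2(u,\pi/2)=(m-1)\sin u>0
\end{equation*}
for all $u\in(0,\pi/2)$, so every crossing of the meridian $v=0$ is downward and every crossing of $v=\pi/2$ is upward, which is incompatible with any nontrivial net winding around the cylinder. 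Hence any periodic orbit is contractible.

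For a contractible closed orbit $\Gamma$ bounding a disk $R$, the Poincar\'e index theorem forces $+1=\sum_{p\in R}\text{ind}(p)$. By the previous lemma, the only interior fixed points are the nodes $p_6,p_7$, each of index $+1$, while the boundary fixed points $p_1,\dots,p_5$ are saddles lying on the invariant edges $u=0,\pi/2$ and therefore outside $R$. Thus $\Gamma$ must enclose exactly one of $p_6,p_7$.

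To rule out each remaining case I would apply Bendixson--Dulac with the Dulac function $B(u,v)=(\sin u\cos u)^{-1}$. A direct computation yields
\begin{equation*}
\diver(BX)(u,v)=\frac{\cos^{2}u+m-1}{\cos u}\cos v+\frac{\sin^{2}u+n-1}{\sin u}\sin v,
\end{equation*}
which evaluates to $m+n-1>0$ at $p_6=(\bar\alpha,\bar\alpha)$ and to $-(m+n-1)<0$ at $p_7=(\bar\alpha,\bar\alpha-\pi)$. Since $BX_1\,dv-BX_2\,du\equiv 0$ along $\Gamma$, Green's theorem gives $\iint_R\diver(BX)\,dA=0$, which would contradict $\diver(BX)$ having constant sign on $R$.

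The main obstacle I anticipate is that $\diver(BX)$ is not globally sign-definite on $\bar D$: its zero locus consists of two curves separating the positive-divergence region (containing $p_6$) from the negative one (containing $p_7$). To conclude for large orbits, I expect to supplement the Dulac argument with a trapping-region analysis exploiting the explicit nullclines $\{X_1=0\}=\{v=u\}\cup\{v=u-\pi\}$ and $\{X_2=0\}=\{v=\bar v(u)\}\cup\{v=\bar v(u)-\pi\}$, together with the hyperbolicity of $p_6,p_7$ as nodes (which is exactly where the hypothesis $m+n\ge 8$ enters), in order to confine any putative closed orbit to the sign-definite neighborhood of its enclosed node.
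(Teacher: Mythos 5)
Your preliminary reductions are sound: the computation $\diver(BX)=\frac{\cos^2u+m-1}{\cos u}\cos v+\frac{\sin^2u+n-1}{\sin u}\sin v$ is correct, the winding argument on the cylinder (using the signs of $X_2$ on $v=0$ and $v=\pi/2$) correctly rules out non-contractible closed orbits, and the index-theoretic observation that a contractible periodic orbit must enclose exactly one of the nodes $p_6,p_7$ is a valid (and nice) step that the paper does not even use. But the proof is not complete, and the missing piece is precisely the heart of the matter. As you yourself note, $\diver(BX)$ changes sign in $\bar D$ (its positivity set is a band of width $\pi$ in $v$ around each node), so Bendixson--Dulac yields nothing until you show that the region bounded by a putative periodic orbit lies entirely inside a sign-definite region. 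Your final paragraph only says you ``expect'' to obtain this by a trapping-region analysis using the nullclines and the node structure; no such argument is given, and without it the contradiction $\iint_R\diver(BX)\,dA=0$ versus sign-definiteness simply cannot be drawn. In other words, the index theorem and the Dulac computation reduce the problem to a confinement statement, but they do not prove it, and that confinement is exactly the nontrivial content of the lemma.

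For comparison, the paper's proof takes the plain divergence (Dulac factor $1$), computes $\diver X=(3\cos^2u+m-2)\sin u\cos v+(3\sin^2u+n-2)\sin v\cos u$, and argues from the nullcline/phase-plane structure (the sign patterns of $X_1$ and $X_2$ recorded in \eqref{sign-X_1}--\eqref{sign-X_2}) that the interior $\Omega$ of any periodic orbit must be contained either in $(0,\frac{\pi}{2})\times(0,\frac{\pi}{2})$ or in $(0,\frac{\pi}{2})\times(-\pi,-\frac{\pi}{2})$, where $\diver X$ is respectively positive and negative; the divergence theorem together with $X\cdot\nu=0$ along the orbit then gives the contradiction. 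So the paper's confinement claim plays exactly the role of the step you left open. To complete your argument you would need to prove an analogous statement, e.g.\ that an orbit encircling $p_6$ cannot leave the band where $\diver(BX)>0$, which requires an explicit analysis of how trajectories cross the lines $v=u$, $v=u-\pi$ and the curves $v=\bar v(u)$, $v=\bar v(u)-\pi$; as it stands, the proposal has a genuine gap.
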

\begin{proof}
If we assume by contradiction that $X$ admits a periodic orbit $\phi=(u,v)$ in $\bar{D}$, then, denoting the interior of $\phi$ by $\Omega$, we would have either $\Omega\subset (0,\frac{\pi}{2})\times (0,\frac{\pi}{2})$ or $\Omega\subset (0,\frac{\pi}{2})\times (-\pi,-\frac{\pi}{2})$. This follows from the phase plane study. Moreover, since
$$\diver X=(3 \cos^2 u+m-2)\sin u\cos v+(3 \sin^2 u+n-2)\sin v\cos u,$$
then we have $\diver X>0$ in $(0,\frac{\pi}{2})\times (0,\frac{\pi}{2})$ and $\diver X<0$ in $(0,\frac{\pi}{2})\times (-\pi,-\frac{\pi}{2})$. If, for instance, $\Omega\subset (0,\frac{\pi}{2})\times (0,\frac{\pi}{2})$, by the divergence Theorem we have
$$\int_{\partial\Omega} X\cdotp\nu d\sigma=\int_\Omega \diver X>0.$$
On the other hand, by the dynamical system (\ref{dyn-syst-uv}) we have $X=\phi'$ on $\partial\Omega$ and $\nu=(\phi')^\bot$, so that $X\cdotp\nu=0$ on $\partial\Omega$, a contradiction.
\end{proof}
 
Once proved Lemma \ref{lemma-per-orb}, the argument is the same as in \cite{ABPRS}. We will sketch it here for the sake of completeness. Using the phase plane of the vector field $X$ and computing the eigenvalues of $JX(p_j)$, we can see that the orbits intersecting $\bar{D}$ are either constants or heteroclinic. The heteroclinic ones can be classified as follows.
\begin{enumerate}
\item The ones which are contained in a vertical line, that is
\begin{equation}
\begin{aligned}
\{\phi_1(t)\}_{t\in\R}&=W^u(p_2)\cap W^s(p_1),\qquad\{\phi_2(t)\}_{t\in\R}=W^u(p_4)\cap W^s(p_3)\\
\{\phi_3(t)\}_{t\in\R}&=W^u(p_4)\cap W^s(p_5),\qquad\{\phi_4(t)\}_{t\in\R}=W^u(p_2)\cap W^s(p_2+(0,\pi))\\
\{\phi_5(t)\}_{t\in\R}&=W^u(p_1-(0,\pi))\cap W^s(p_1).
\end{aligned}
\end{equation}
\item The ones belonging to $W^u(p_6)\cap W^s(p_7)$.
\item The ones belonging to $W^u(p_7-(0,\pi))\cap W^s(p_7)$.
\item The ones belonging to $W^u(p_6)\cap W^s(p_6+(0,\pi))$.
\item four orbits $\phi_6,\,\phi_7,\phi_8,\,\phi_9$ such that
\begin{equation}
\begin{aligned}
\{\phi_6(t)\}_{t\in\R}&=W^u(p_6)\cap W^s(p_2),\qquad\{\phi_7(t)\}_{t\in\R}=W^u(p_6)\cap W^s(p_4)\\
\{\phi_8(t)\}_{t\in\R}&=W^u(p_1)\cap W^s(p_7),\qquad\{\phi_9(t)\}_{t\in\R}=W^u(p_3)\cap W^s(p_7),
\end{aligned}
\end{equation}
\end{enumerate}
\begin{lemma}
\label{lemma-stability}
The orbit $\phi_6=(u_6,v_6)$ fulfils $u'_6<0$, while the orbit $\phi_7=(u_7,v_7)$ fulfils $u'_7>0$.
\end{lemma}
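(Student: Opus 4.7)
The plan is to linearize $X$ at the unstable node $p_6=(\bar\alpha,\bar\alpha)$ to identify the initial tangent directions of $\phi_6$ and $\phi_7$, and then to promote these initial signs to global ones by a phase-plane invariance argument. Using $\sin(u-v)|_{p_6}=0$ and $\cos(u-v)|_{p_6}=1$, a short computation yields
$$DX(p_6)=\sin\bar\alpha\cos\bar\alpha\begin{pmatrix}1 & -1\\ m+n-2 & m+n-2\end{pmatrix}.$$
Setting $N=m+n-2$, the eigenvalues are $\mu_\pm=\tfrac{(1+N)\pm\sqrt{N^2-6N+1}}{2}$, which are real, distinct and both strictly greater than $1$ provided $N\ge 6$, i.e.\ $m+n\ge 8$. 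Because the eigenvector equation reduces to $v_2=(1-\mu)v_1$ with $1-\mu<0$, each unstable eigenvector has components of opposite sign; hence the four separatrices at $p_6$ leave in two pairs of directions, one pointing north-west (NW), the other south-east (SE). Since $p_2=(0,\pi/2)$ is strictly NW of $p_6$ and $p_4=(\pi/2,0)$ is strictly SE, the orbit $\phi_6\in W^u(p_6)\cap W^s(p_2)$ must leave tangent to the NW direction, and $\phi_7\in W^u(p_6)\cap W^s(p_4)$ tangent to the SE direction; in particular $u_6'<0$ and $u_7'>0$ immediately after the starting time.

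To globalize, introduce the open region $R_1:=\{(u,v)\in(0,\pi/2)^2:\,v>u\text{ and }v>\bar v(u)\}\ni p_2$, inside which $X_1<0$ and $X_2>0$, and set $R_3:=\{(u,v):\,0<u<\bar\alpha,\,u<v<\bar v(u)\}$. The previous step shows $\phi_6$ enters $R_1$; I claim it stays there forever, whence $u_6'<0$ along the orbit. Because $v-u$ is strictly increasing in $R_1$ (since $v'>0$ and $u'<0$), $\phi_6$ cannot return to the diagonal $v=u$. The horizontal line $v=\pi/2$ is impassable: $X_2(u,\pi/2)=(m-1)\sin u>0$ and the signs $X_1<0,\,X_2>0$ persist in $\{v>\pi/2\}$, so any orbit crossing the line would never return to $p_2$. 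The edge $\{u=0\}$ is invariant and may only be approached asymptotically, consistent with $\phi_6\to p_2$. The only remaining candidate exit is the nullcline arc $\{v=\bar v(u),\,0<u<\bar\alpha\}$ leading into $R_3$.

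Suppose by contradiction that $\phi_6$ enters $R_3$. There $X_1<0$ and $X_2<0$, so the flow is SW; by Poincar\'e--Bendixson in the compact set $\bar D$ together with Lemma~\ref{lemma-per-orb}, the $\omega$-limit of $\phi_6$ is a boundary singular point. An $R_3$-orbit can only leave $R_3$ through the diagonal into $R_2$ (accumulating at $p_4$) or accumulate on $\{u=0\}$ and flow downward toward $p_1$; in neither case does it approach $p_2$. Indeed, the stable eigenvector of $DX$ at $p_2$ is $(n,-(m-1))$, along which $v\approx\pi/2-\tfrac{m-1}{n}u$, while the expansion $\bar v(u)\approx\pi/2-\tfrac{m-1}{n-1}u$ for small $u$ together with $\tfrac{m-1}{n}<\tfrac{m-1}{n-1}$ shows that $W^s(p_2)$ lies strictly in $R_1$ near $p_2$, not on the nullcline. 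This contradicts $\phi_6\to p_2$, so $\phi_6\subset R_1$. The argument for $\phi_7$ is perfectly symmetric with $(R_1,R_3,p_2)$ replaced by $(R_2,R_4,p_4)$. The main obstacle is the final comparison of $W^s(p_2)$ with the nullcline arc, which is what forbids the one-way excursion from $R_1$ into $R_3$ and hence closes the argument.
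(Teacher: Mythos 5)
Your setup is sound and parallels the paper's own strategy (nullcline decomposition, linearization at $p_6$ and $p_2$, trapping the orbit in the region above both nullclines, which is the paper's $\Omega_4$): the Jacobian at $p_6$, the stable eigenvector $(n,-(m-1))$ at $p_2$, the expansion $\bar v(u)\approx \frac{\pi}{2}-\frac{m-1}{n-1}u$, and the one-way character of the crossing of the nullcline arc between $p_2$ and $p_6$ are all correct and correspond to the paper's comparison of the eigenvectors $\eta_1,\xi_1$ with the slopes of $\bar v$. However, the argument has a genuine gap exactly at the step that carries the whole proof: excluding that $\phi_6$, after entering $R_3=\Omega_1$, returns to $p_2$ ``the long way around''. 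Your claim that an orbit leaving $R_3$ through the diagonal into $R_2$ must accumulate at $p_4$ is unjustified (and generically false: $p_4$ is a saddle, and most such orbits tend to the stable node $p_7$); more importantly, the sign analysis alone does not forbid the itinerary $\Omega_1\to\Omega_2\to\Omega_3\to$ (upward crossing of the diagonal at some $u>\bar\alpha$) $\to\Omega_4\to p_2$, since in $\Omega_4$ the flow points north-west and $W^s(p_2)$ is reached from inside $\Omega_4$. You cannot dismiss this path by saying that orbits in those regions ``do not approach $p_2$'': the orbit under consideration \emph{is} the unique non-generic orbit in $W^s(p_2)\cap D$, so genericity arguments are unavailable. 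The paper closes precisely this loophole by observing that such a winding trajectory would have to intersect the open family of heteroclinics in $W^u(p_6)\cap W^s(p_7)$, contradicting uniqueness of solutions of the ODE; your proof contains no substitute for this barrier/orbit-crossing argument, and your correct observation about the position of $W^s(p_2)$ relative to the nullcline only blocks re-entry through the arc between $p_2$ and $p_6$, not the detour around $p_6$.

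A second, smaller gap is the initial step: asserting that $\phi_6$ ``must leave tangent to the NW direction'' because $p_2$ lies north-west of $p_6$ is a heuristic, not a proof; a priori the orbit could leave in the SE direction and wander before approaching $p_2$, and ruling that out again requires the global crossing argument (or the paper's explicit comparison of $-\xi_1$ with $\frac{d\bar v}{du}(\bar\alpha)$ to place the departing direction in $\Omega_4$, combined with the barrier argument for the rest). Also note a purely notational clash: you set $N=m+n-2$ while the paper has $N+1=m+n$, so your eigenvalue formula $\frac{(1+N)\pm\sqrt{N^2-6N+1}}{2}$ is the paper's $\frac{N\pm\sqrt{N^2-8N+8}}{2}$ in disguise; harmless, but worth aligning.
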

\begin{proof}
We prove, for example, that $u'_6<0$. We write
$$\bar{D}=\cup_{i=1}^4 \bar{\Omega}_i,$$
where
\begin{equation}\notag
\begin{aligned}
\Omega_1&:=\{(u,v)\in D:\,u<\bar{\alpha},\, u<v<\bar{v}(u)\}\\
\Omega_2&:=\{(u,v)\in D:\,v<\min\{\bar{v}(u),u\}\}\\
\Omega_3&:=\{(u,v)\in D:\,u>\bar{\alpha},\, \bar{v}(u)<v<u\}\\
\Omega_4&:=\{(u,v)\in D:\,v>\max\{\bar{v}(u),u\}\}
\end{aligned}
\end{equation}
Being $u'_6=X_1(u_6,v_6)$, the claim is proved if we show that $\phi_6(t)\in\Omega_4$ for any $t\in\R$. The eigenvalues of $JX(p_2)$ are $\lambda_1=1$ and $\lambda_2=-(n-1)$, with corresponding eigenvectors, for instance, $\eta_1=(1,-\frac{m-1}{n})$ and $\eta_2=(0,1)$, while the eigenvalues of $JX(p_6)$ are given by
$$\mu_{1,2}=\frac{\sin\bar{\alpha}\cos\bar{\alpha}}{2}(N\pm\sqrt{N^2-8N+8})$$
with corresponding eigenvectors
\begin{equation}\notag
\begin{aligned}
\xi_1&=\left(1,\frac{-2(N-1)}{3N-2+\sqrt{N^2-8N+8}}\right)\\
\xi_2&=\left(1,\frac{-2(N-1)}{3N-2-\sqrt{N^2-8N+8}}\right).
\end{aligned}
\end{equation}
Computing $\frac{d\bar{v}}{du}(0)$ and $\frac{d\bar{v}}{du}(\bar{\alpha})$ it is possible to see that $\eta_1$ and $-\xi_1$ point towards $\Omega_4$. Since, by the Grobman-Hartman theorem, $\phi_6$ starts from $p_6$ with the direction of $-\xi_1$ and arrive at $p_2$ with the direction of $-\eta_1$, then there exists $T>0$ such that $\phi_6(t)\in \Omega_4$ for $|t|>T$. It remains to prove that $\phi_6(t)\in\Omega_4$ for any $t\in[-T,T]$. If we assume by contradiction that this is not true, then there exist $t_0$ such that $\phi_6$ touches the graph of $\bar{v}$ for the first time. By the phase plane study, it is possible to see that, if it is the case, then $\{\phi_6(t)\}_{t\in\R}$ intersects all the $\Omega_i$, $1\le i\le 4$. In particular, it intersects all the orbits belonging to $W^u(p_6)\cap W^s(p_7)$, which is impossible.
\end{proof}
\begin{remark}
Using that $$u'=\frac{b'a-a'b}{a^2+b^2}$$
and 
\begin{equation}\label{param_O(m)O(n)_invariantnormalvec}
\nu_\Sigma(\pe)=\left(-b'(s){\tt x},a'(s){\tt y}\right)
\end{equation}
so that $\langle \pe,\nu_\Sigma(\pe)\rangle=-ab'+ba'$, it is possible to see that, if $u'$ has constant sign, then the corresponding orbit gives rise to a surface fulfilling property (\ref{stab+}) or (\ref{stab-}). In particular, $\phi_6$ corresponds to $\Sigma^+_{m,n}$, while $\phi_7$ corresponds to $\Sigma^-_{m,n}$.
\end{remark}



\subsection{The Jacobi equation of $\Sigma^\pm_{m,n}$}
In this subsection, we will describe the $O(m)\times O(n)$-invariant Jacobi fields of $\Sigma^\pm_{m,n}$. Since $\Sigma^+_{m,n}=\sigma^{-1}(\Sigma^-_{n,m})$, being $\sigma(x,y):=(y,x)$ the reflection which changes $x$ with $y$, for $(x,y)\in \R^m\times\R^n$, we will only consider the case $\Sigma:=\Sigma^-_{m,n}$. \\

Since we are only interested in $O(m)\times O(n)$-invariant Jacobi fields, that is functions satisfying ${\tt v}(\pe)=v(s)$, $s=s(\pe)$, the equation 
$$\Delta_\Sigma {\tt v}+|A_\Sigma|^2{\tt v}={\tt f}\qquad\text{in $\Sigma$}$$
reads
\begin{equation}
\label{eq-Jacobi-s}
\partial^2_s v+\alpha(s)\partial_s v+\beta(s)=f(s),\qquad\forall\, s\in\R,
\end{equation}
where 
\begin{equation}\label{def:Asigma}
\alpha(s):=(m-1)\frac{a'}{a}+(n-1)\frac{b'}{b},\qquad\beta(s):=|A_\Sigma(\pe)|^2,\qquad f(s):={\tt f}(\pe)
\end{equation}
We introduce the \textit{Emden-Fowler} change of variables $s=e^t$ and we set
\begin{equation}\label{weightsJTEmdenFowler}
\tilde{\alpha}(t):=\alpha(e^t)e^t-1 \quad \hbox{and}\quad \tilde{\beta}(t):= \beta(e^t)e^{2t}.
\end{equation}
and
\begin{equation}\label{Cancellingfirstderterm1}
p(t):=\exp \bigg ({-\int_0^t \frac{\tilde{\alpha}(\tau)}{2}d\tau}\bigg) \quad \hbox{for} \quad t\in\R,
\end{equation}
so that $p(t)$ solves
\begin{equation}\label{Cancellingfirstderterm2}
2\frac{\partial_t p}{p}+\tilde{\alpha}(t)=0.
\end{equation}
We look for solutions $v$ to (\ref{eq-Jacobi-s}) of the form $v(s)=p(t)u(t)$, so that $u$ has to satisfy the equation
\begin{equation}
\partial^2_t u+V(t)u=\tilde{f},
\end{equation}
where
\begin{equation}\label{righthandandpotential}
\begin{aligned}
\tilde{f}(t)&:=\frac{e^{2t}}{p(t)}f(e^t)\quad \hbox{and} \quad 
V(t)&:=\frac{\partial^2_t p}{p}+\tilde{\alpha}(t)\frac{\partial_t p}{p}+\tilde{\beta}(t)
\end{aligned}
\end{equation}
for $t\in \R$. These computations were made in Subsection $2.3$ of \cite{AKR1} too.

We need the following non degeneracy result.
\begin{proposition}\label{prop_Jacobi_Sigma}
Let $m,n\ge 2$, $m+n\ge 8$. Then $\Sigma:=\Sigma^{-}_{m,n}$ has exactly two $O(m)\times O(n)$-invariant linearly independent Jacobi fields ${\tt v}_\pm(\pe)=v_\pm(s(\pe))>0$ such that
\begin{itemize}
\item[(i)] $v_+(s)$ is smooth, even in the variable $s$, $v_+(0)=1$ and
\begin{equation}\label{JacField+asympt}
v_+(s)=c_1 s^{\gamma} \big( 1 + \mathcal{O}(s^{-\alpha})\big)\quad  \hbox{as} \quad s\to\infty
\end{equation}
with $\gamma\in\{\gamma_\pm\}$.
\medskip
\item[(ii)] $v_-(s)$ is smooth except at $s=0$, where it is singular if $n>2$ and for some $c_2>0$
\begin{equation}\label{JacField-asympt1}
v_-(s)= \left\{
\begin{aligned}
s^{-(n-2)}(1 + \mathcal{O}(s^2)) & \quad \hbox{as} \quad s \to 0\\
c_2s^{\bar{\gamma}} \big( 1 + \mathcal{O}(s^{-\alpha})\big) &\quad  \hbox{as} \quad s \to \infty,
\end{aligned}
\right.
\end{equation}
where $\alpha\in (0,1)$ and $\bar{\gamma}\in\{\gamma_{\pm}\}$, $\bar{\gamma}\ne\gamma$. Moreover, relation \eqref{JacField+asympt} and \eqref{JacField-asympt1} can be differentiated.
\end{itemize} 
\end{proposition}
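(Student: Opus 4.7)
The $O(m)\times O(n)$-invariant Jacobi equation on $\Sigma=\Sigma^-_{m,n}$ reduces to the linear second-order ODE $\partial_s^2 v+\alpha(s)\partial_s v+\beta(s)v=0$, whose solution space is two-dimensional. The proof therefore amounts to exhibiting two linearly independent solutions with the prescribed local behaviour at $s=0$ and at $s=\infty$; I would do this by constructing one explicit solution from a geometric Jacobi field and then deriving the second by reduction of order.

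For $v_+$ I would take the dilation Jacobi field. On any minimal hypersurface of $\R^{m+n}$, the function $\pe\mapsto\langle \pe,\nu(\pe)\rangle$ is a Jacobi field, and by Theorem \ref{Th_Sigma_detailed} together with Remark \ref{rem-Sigma}(3) it is $O(m)\times O(n)$-invariant and, after a sign choice, strictly positive on $\Sigma$. Using \eqref{param_O(m)O(n)_invariantnormalvec}, this function reduces to $v_+(s)\propto a'(s)b(s)-a(s)b'(s)$, a smooth and strictly positive function of $s$ alone; after normalisation we may take $v_+(0)=1$. The evenness of $v_+$ in $s$ is the standard manifestation of smoothness of the underlying function on $\Sigma$ in the polar-type coordinate $s$ near the tip where $b(s)\to 0$. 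The second solution $v_-$ is then obtained via reduction of order applied to the transformed equation $\partial_t^2 u + V(t)u=0$ from \eqref{righthandandpotential}, e.g.\ through the explicit Wronskian identity.

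The asymptotic description splits into analyses at the two ends $s=0$ and $s=\infty$. Near $s=0$, the dynamical systems analysis in the proof of Theorem \ref{Th_Sigma_detailed} (the fact that the orbit $\phi_7$ lands at the saddle $p_4=(\pi/2,0)$) ensures that $b(s)\to 0$ linearly while $a(s)$ stays bounded away from zero. Substituting in \eqref{def:Asigma} yields $\alpha(s)=(n-1)/s+\mathcal{O}(s)$ and $\beta(s)=\mathcal{O}(1)$, so that $s=0$ is a regular singular point with indicial equation $\rho^{2}+(n-2)\rho=0$ and indicial roots $\{0,-(n-2)\}$. The Frobenius method then provides a smooth solution normalised to $1$ at $s=0$, which matches $v_+$, and a singular solution of the form $s^{-(n-2)}(1+\mathcal{O}(s^2))$, which matches $v_-$. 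Near $s=\infty$, Remark \ref{rem-Sigma}(1) tells us that $\Sigma$ is a graph over $C_{m,n}$ with perturbation decaying like $r^{\gamma}$ for $\gamma\in\{\gamma^\pm\}$. Plugging these asymptotics into \eqref{def:Asigma} and using \eqref{weightsJTEmdenFowler}--\eqref{righthandandpotential} shows that $V(t)$ has a limit as $t\to+\infty$ and that $\partial_t^2 u+V(t)u=0$ is, up to a controlled decaying perturbation, a constant-coefficient ODE whose characteristic roots correspond precisely to the exponents $\gamma^\pm$ in the variable $s$. A standard asymptotic ODE result for such perturbed equations then yields two linearly independent solutions of the form $s^{\gamma^{\pm}}(1+\mathcal{O}(s^{-\alpha}))$, and the expansions can be differentiated by substituting them back into the ODE. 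Since $v_+$ and $v_-$ are linearly independent, their dominant asymptotic exponents at infinity must be distinct, giving the claimed $\gamma\in\{\gamma^\pm\}$ and $\bar\gamma\neq\gamma$.

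The main technical hurdle is the rigorous matching between the Frobenius solutions defined at $s=0$ and the asymptotic solutions at $s=\infty$: a priori each Frobenius solution is some unknown linear combination of the two asymptotic modes, and one must control the error $\mathcal{O}(s^{-\alpha})$ throughout the transition region. The precise decay of $\Sigma$ toward $C_{m,n}$ recorded in Remark \ref{rem-Sigma}(1), together with the linear independence of $v_+$ and $v_-$ as globally defined Jacobi fields, is what makes this identification possible.
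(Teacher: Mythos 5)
Your overall strategy coincides with the paper's: take $v_+$ to be the dilation Jacobi field $\langle \pe,\nu_\Sigma(\pe)\rangle$, positive by Theorem \ref{Th_Sigma_detailed}, obtain $v_-$ by reduction of order, and analyse the two ends of the ODE $\partial_s^2v+\alpha\partial_s v+\beta v=0$ (the paper delegates the end computations to Proposition 2.1 of \cite{AKR1}; your Frobenius analysis at $s=0$, with indicial roots $0$ and $-(n-2)$, is a fine equivalent). The genuine gap is at the one step that carries the content of \eqref{JacField-asympt1} at infinity: the inference ``$v_+$ and $v_-$ are linearly independent, hence their dominant asymptotic exponents must be distinct'' is false. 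In the two-dimensional solution space, the solutions asymptotic to the subdominant mode $s^{\gamma^-}$ form only a one-dimensional subspace; if $v_+\sim c_1 s^{\gamma^+}$ (the dominant case), a second, linearly independent solution generically also grows like $s^{\gamma^+}$ (e.g.\ $s^{\gamma^+}$ and $s^{\gamma^+}+s^{\gamma^-}$ are independent). Concretely, if you define $v_-(s)=v_+(s)\int_{s_0}^{s}e^{-\int\alpha}\,v_+^{-2}\,d\tau$ with a finite base point $s_0$ and $\gamma=\gamma^+$, then the integrand behaves like $\tau^{-(N-1)-2\gamma^+}$, which is integrable at infinity, so $v_-\sim K v_+$ with $K>0$, i.e.\ $\bar\gamma=\gamma$: your matching claim fails for exactly the object you constructed. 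Since $\bar\gamma\neq\gamma$ is the nontrivial assertion of the proposition at infinity, this is a real hole, not a formality.

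The repair is what the paper's (cited) proof does: read off the asymptotics of $v_-$ from the variation-of-parameters integral itself rather than from independence. At infinity $\alpha(s)\sim(N-1)/s$, so $e^{-\int\alpha}\sim s^{-(N-1)}$, and since $\gamma^++\gamma^-=-(N-2)$ one gets $v_+(s)\int e^{-\int\alpha}v_+^{-2}\sim s^{-(N-2)-\gamma}=s^{\bar\gamma}$ with $\bar\gamma\neq\gamma$, \emph{provided} the base point is chosen correctly (at $+\infty$ when $\gamma=\gamma^+$, at a finite point when $\gamma=\gamma^-$); with that choice the same formula gives positivity of $v_-$ (part of the statement, which you do not address) and, using $\alpha(s)\sim(n-1)/s$ and $v_+(0)=1$ near the tip, the singular rate $s^{-(n-2)}$ as $s\to0$, so both ends are handled at once. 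Alternatively, a corrected soft argument must split into cases: if $\gamma=\gamma^-$, independence does force $\bar\gamma=\gamma^+$; if $\gamma=\gamma^+$, define $v_-$ as the (unique up to scale) solution asymptotic to $s^{\gamma^-}$ and use your own Frobenius dichotomy at $s=0$ --- the solutions smooth at $s=0$ are exactly the multiples of $v_+$, which has a different growth rate at infinity --- to conclude that this $v_-$ must be singular at $s=0$ with the rate $s^{-(n-2)}$. Either route closes the gap; as written, the identification step is missing.
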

\begin{proof}
Point (i) follows setting $${\tt v}_+(\pe):=-\langle \pe,\nu_\Sigma(\pe)\rangle=ab'-a'b>0,$$ 
which is positive due to Theorem \ref{Th_Sigma_detailed}, and point (1) of Remark \ref{rem-Sigma}. Point (ii) follows by taking an Emden-Fowler change of variables and using the variation of parameters formula, which is possible thanks to the fact that ${\tt v}_+>0$ and the asymptotic behaviour of ${\tt v}_+$ as $s\to 0$ and as $s\to\infty$. See Proposition $2.1$ of \cite{AKR1} for the explicit computations, they are exactly the same. The only difference in the case $n=2$ or $m=2$ is that we do not know which of the two Jacobi fields has asymptotic behaviour given by $\gamma_+$ and which one has asymptotic behaviour given by $\gamma_-$.
\end{proof}

\section{Jacobi-Toda system}
In this section we prove Theorem (\ref{main-th-JT}). We look for an $O(m)\times O(n)$-invariant solution to (\ref{JT-system}) of the form
$$\h={\tt v}+\q,$$
where ${\tt v}:=({\tt v}_1,\dots,{\tt v}_k)$ is a suitable  \textit{approximate solution} and $\q:=(\q_1,\dots,\q_k)$ is a {\it small correction} with respect to $\delta$ is some suitable topology (see subsection \ref{subs-decoupling}).

Using the convention that 
$$
{\tt v}_0=\q_0=-\infty,\qquad{\tt v}_{k+1}=\q_{k+1}=\infty,$$
for $C^2$ functions ${\tt v}=({\tt v}_1,\dots,{\tt v}_k):\Sigma\to\R^k$ and $\q=(\q_1,\dots,\q_k):\Sigma\to\R^k$ we introduce the notations
\begin{equation}
\label{errorJacToda}
\begin{aligned}
\ta_0=\ta_k&:=0,\qquad \ta_j:=e^{-\sqrt{2}({\tt v}_{j+1}-{\tt v}_j)},\, 1\le j\le k-1\\
E_{\delta,j}({\tt v})&:=\delta J_\Sigma {\tt v}_j-\gamma (e^{-\sqrt{2}({\tt v}_j-{\tt v}_{j-1})}-e^{-\sqrt{2}({\tt v}_{j+1}-{\tt v}_j)})\\
\mathcal{L}_{\delta,j}(\q)&:=\delta J_\Sigma \q_j+\gamma\sqrt{2} (\ta_{j-1}(\q_j-\q_{j-1})-\ta_j(\q_{j+1}-\q_j)).
\end{aligned}
\end{equation}
Setting $Q(t):=e^{-\sqrt{2}t}-1+\sqrt{2}t$, $Q(\infty)=0$,  system (\ref{JT-system}) can be rewritten in the form
\begin{equation}
\label{syst-JT-sol-appr}
E_{\delta,j}({\tt v})+\mathcal{L}_{\delta,j}(\q)-\gamma \ta_{j-1}Q(\q_j-\q_{j-1})+\gamma \ta_j Q(\q_{j+1}-\q_j)=0,\qquad 1\le j\le k.
\end{equation} 

\subsection{The approximate solution}\label{subs-approx-sol}

Let $W:[0,\infty)\to [0,\infty)$ be the \textit{Lambert function}, which is the inverse of the function $te^t\in[0,\infty)\mapsto [0,\infty)$. In other words, for any given $z \ge 0$, $W(z)$ uniquely solves
\begin{equation}
W(z)e^{W(z)}=z,\qquad\forall z\ge 0
\label{def-Lambert}
\end{equation}

Clearly, $W$ is smooth, $W(z)>0$ for $z>0$ and it is well known that
\begin{equation}\label{asympLambertfnzeroinfty}
W(z)= \left\{
\begin{aligned}
z-z^2+& \mathcal{O}(z^4), \quad & \hbox{as} \quad &z\to 0^+,\\
\log(z)- \log \big(\log (z)\big)&+ \mathcal{O}\bigg(\frac{\log \big(\log (z)\big)}{\log (z)}\bigg), \quad & \hbox{as} \quad  &z\to \infty
\end{aligned}
\right.
\end{equation}
and these relations can be differentiated. From (\ref{asympLambertfnzeroinfty}), for any $i\geq 1$, there exists $C_i$ such that 
 $$|W^{(i)}(z)|\le\frac{C_i}{(1+z)^i},\qquad\forall\, z\ge 0 \quad \hbox{(see Lemma $3.1$ of \cite{AKR1})}.
$$

Recall that $\beta(s)=|A_\Sigma(\pe)|^2$, where $s=s(\pe)\in\R$ is the arc length along the profile curve $(a,b)$ which defines $\Sigma$. We set 
\begin{equation}
\label{def-w}
w(s):=\frac{1}{\sqrt{2}}W\left(\frac{\gamma\sqrt{2}}{\delta\beta(s)}\right).
\end{equation}
 and observe that
\begin{equation}
\label{eq-w}
\delta \beta(s)w=\gamma e^{-\sqrt{2}w}
\end{equation}
and
\begin{equation}
\label{no-intersection-0}
w(s)=O(\log(s^2+2)+|\log\delta|)
\end{equation}
as $\delta\to 0$, uniformly in $s\ge 0$ with
\begin{equation}
\label{est-der-w}
|w^{(i)}(s)|\le\frac{C_i}{(1+s)^i},\qquad\forall\,i\ge 1,\, s\ge 0.
\end{equation}

We use the function $w$ to construct the approximate solution ${\tt v}$ to our Jacobi-Toda system (\ref{JT-system}), whose main term will be given by
\begin{equation}
\label{def-w_0}
{\tt w}^0_j(\pe):=\left(j-\frac{k+1}{2}\right)w(s).
\end{equation}
where $s=s(\pe)$.

\begin{lemma}\label{lemmaapproximateslnJacToda}
For any $\delta>0$ and for any $l\ge 1$, there exist $O(m)\times O(n)-$invariant functions ${\tt w}^0,\dots,{\tt w}^l:\Sigma\to\R^k$ defined on $\Sigma$ which are smooth and satisfy the following:
\begin{itemize}
\item[i.] \begin{equation}
\label{w-sum-0}
\sum_{j=1}^k {\tt w}^l_j=0,\qquad\forall\, l\ge 1;
\end{equation}

\item[ii.] for every $1\le j\le k,\,l\ge 1,\, i\ge 0$
\begin{equation}
\label{est_approx_sol}
|\partial_s^{(i)}w^l_j|\le \frac{C_l}{(s+1)^i(\log(s^2+2))^{\frac{l-1}{2}}|\log\delta|^{\frac{l-1}{2}}},\qquad\forall\, s\ge 0,
\end{equation}
where $s=s(\pe)$ and ${\tt w}^l(\pe)=w^l(s)$;

\item[iii.] the error defined in  \eqref{errorJacToda} related to the function
\begin{equation}\label{recursiveApproxslnJacToda}
{\tt v}^l:={\tt w}^0+\dots+{\tt w}^l
\end{equation}
is given by
\begin{equation}
\label{err_j}
E_{\delta,j}({\tt v}^l)=\delta J_\Sigma {\tt w}^l_j \quad \hbox{in} \quad \Sigma,\,\forall\, 1\le j\le k,\,l\ge 1
\end{equation}
in the coordinate $s=s(\pe)$;

\item[iv.] \begin{equation}
\label{est_err_j}
|E_{\delta,j}(v^l)|\le \frac{C_l\delta }{(1+s)^2(\log(s^2+2))^{\frac{l-1}{2}}|\log\delta|^{\frac{l-1}{2}}},\qquad\forall\,s\ge 0,\, 1\le j\le k,\,l\ge 1
\end{equation}
for some constant $C_l>0$ depending only on $l$.
\end{itemize}
\end{lemma}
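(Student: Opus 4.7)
The plan is to construct the corrections ${\tt w}^l$ recursively, each one designed precisely to absorb the Toda interaction residue left by ${\tt v}^{l-1}$ into the Jacobi operator applied to ${\tt w}^l$.

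For the base case, set ${\tt w}^0_j:=(j-\tfrac{k+1}{2})w$ as indicated; the identity $\sum_{j=1}^{k}(j-\tfrac{k+1}{2})=0$ guarantees the normalisation, and for $2\le j\le k-1$ the successive gaps ${\tt w}^0_j-{\tt w}^0_{j-1}$ all equal $w$, so the two exponentials in \eqref{errorJacToda} cancel identically. At the boundary indices $j\in\{1,k\}$ one exponential is killed by the conventions ${\tt v}_0=-\infty$, ${\tt v}_{k+1}=+\infty$, leaving a single term $\pm\gamma e^{-\sqrt 2 w}=\pm\delta\beta w$ by \eqref{eq-w}. The pointwise bounds on ${\tt w}^0$ and its derivatives reduce to \eqref{est-der-w} and \eqref{no-intersection-0}.

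For the inductive step, assume ${\tt w}^0,\dots,{\tt w}^{l-1}$ have been built. The condition (iii) reads $E_{\delta,j}({\tt v}^l)=\delta J_\Sigma {\tt w}^l_j$; writing ${\tt v}^l={\tt v}^{l-1}+{\tt w}^l$ and cancelling $\delta J_\Sigma {\tt w}^l_j$ on both sides reduces it to the purely algebraic (pointwise in $s$) system
\[
\gamma\bigl(e^{-\sqrt 2\, g^l_j}-e^{-\sqrt 2\, g^l_{j+1}}\bigr)=\delta\, J_\Sigma {\tt v}^{l-1}_j,\qquad 1\le j\le k,
\]
in the gaps $g^l_j:={\tt v}^l_j-{\tt v}^l_{j-1}$, with boundary conditions $g^l_1=g^l_{k+1}=+\infty$. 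Telescoping gives
\[
g^l_{m+1}=-\tfrac{1}{\sqrt 2}\log\Bigl(-\tfrac{\delta}{\gamma}\sum_{i=1}^{m}J_\Sigma {\tt v}^{l-1}_i\Bigr),\qquad 1\le m\le k-1,
\]
and the compatibility condition at $m=k$ reduces to $\sum_{i=1}^{k}J_\Sigma {\tt v}^{l-1}_i=0$, which holds automatically because $\sum_i {\tt v}^{l-1}_i\equiv 0$ by (i) at the previous levels together with the shape of ${\tt w}^0$. The differences ${\tt w}^l_j-{\tt w}^l_{j-1}=g^l_j-w$, combined with the normalisation $\sum_j {\tt w}^l_j=0$, pin down each ${\tt w}^l_j$ uniquely; smoothness and $O(m)\times O(n)$-invariance are inherited from those of $w$ and $\beta$.

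The estimate (ii) propagates from level $l-1$ to level $l$ by differentiating the explicit formula for $g^l_{m+1}$ and using the defining identity $\delta\beta w=\gamma e^{-\sqrt 2 w}$, the derivative bounds \eqref{est-der-w}, and $\beta\le C(1+s)^{-2}$ (which follows from $\Sigma$ being asymptotic to a cone). Informally, at each iteration the logarithm converts one factor of $e^{-\sqrt 2 w}$ into $w$, which gains a factor of order $1/w\sim 1/\sqrt{\log(s^2+2)|\log\delta|}$ over the previous error; this explains the exponent $(l-1)/2$ in \eqref{est_approx_sol}. Estimate (iv) then follows from (iii) by applying $J_\Sigma$ to ${\tt w}^l_j$ and invoking (ii) together with the bound on $\beta$. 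The main obstacle is to verify inductively that $-\sum_{i=1}^{m}J_\Sigma {\tt v}^{l-1}_i$ is strictly positive and suitably small (so that $g^l_{m+1}$ is real and positive, i.e.\ the disjointness \eqref{disj-graph} is preserved at every step); this amounts to showing that the leading contribution from the $l=0$ term $J_\Sigma {\tt w}^0\propto (j-\tfrac{k+1}{2})(\Delta_\Sigma w+\beta w)$ dominates all higher-order corrections, themselves controlled inductively by (ii) at lower levels.
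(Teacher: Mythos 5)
Your construction follows essentially the same route as the paper's proof: a recursion in $l$ in which the requirement $E_{\delta,j}({\tt v}^l)=\delta J_\Sigma {\tt w}^l_j$ is reduced to a pointwise telescoping algebraic system for the gaps, solvable precisely because $\sum_j {\tt v}^{l-1}_j=0$ (the paper phrases this through the matrix inversion of Remark \ref{rem-matrix}, whose explicit solution $a_j=-b_j^{-1}\sum_{i\le j}c_i$ is exactly your telescoped sum), with the estimates propagating through the gain $1/w\lesssim (\log(s^2+2))^{-1/2}|\log\delta|^{-1/2}$ at each step and positivity of the logarithm's argument coming from the dominance of $\beta w$ over $\Delta_\Sigma w$, just as in the paper. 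The only slip is minor: for $l\ge 2$ one has ${\tt w}^l_j-{\tt w}^l_{j-1}=g^l_j-({\tt v}^{l-1}_j-{\tt v}^{l-1}_{j-1})$ rather than $g^l_j-w$; with this correction your closed formula for $g^l_{m+1}$ is equivalent to the paper's choice of $a^l_j$ (at $l=1$ it reproduces $a^1_j=\frac{j(k-j)}{2}\bigl(1+\frac{\Delta_\Sigma w}{\beta w}\bigr)$), and the rest of your argument goes through unchanged.
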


\begin{remark}
Thanks to (\ref{asympLambertfnzeroinfty}),  (\ref{def-w_0}) and (\ref{est_approx_sol}),  for $\delta>0$ sufficiently small  the functions ${\tt v}_1,\ldots, {\tt v}_k $ satisfy (\ref{disj-graph}), i.e., their graphs are mutually disjoint.
\end{remark}

\begin{remark}
\label{rem-matrix}
Let $k\ge 2$ be an integer and let $b_1,\dots,b_{k-1}\in(0,\infty)$. Then the $k\times(k-1)$-matrix
$$M:=\begin{bmatrix}
-b_1 & \dots&0\\
b_1 & -b_2 &\dots\\
\dots & b_{k-2} & -b_{k-1}\\
0 &\dots & b_{k-1} 
\end{bmatrix}$$
has trivial kernel, since the columns are linearly independent.  On the other hand, for any $a:=(a_1,\dots,a_{k-1})\in\R^{k-1}$, we have
$$\langle M a,e\rangle=-b_1 a_1+b_1 a_1+\dots-b_{k-1}a_{k-1}+b_{k-1}a_{k-1}=0$$
and 
$$
{\rm rank}(M)=k-1-{\rm dim}(Ker M)=k-1.$$ and hence the range of $M$ is given by $span\{e\}^\bot\subset\R^k$, where $e:=(1,\dots,1)\in \R^k$.

As a consequence, for any $c\in\R^k$ such that $\langle c,e\rangle=0$, there exists a unique $a\in\R^{k-1}$ such that $$Ma=c.$$ 
By recursion, the solution $a\in\R^{k-1}$ is explicitly determined by the formula
$$a_j=-\frac{\sum_{i=1}^j c_i}{b_j},\qquad 1\le j\le k-1.$$ 
\end{remark}
Now we can prove Lemma \ref{lemmaapproximateslnJacToda}.
\begin{proof}
The proof is by recursion on $l$. Since all the functions we deal with are $O(m)\times O(n)$-invariant, we will use the notation  ${\tt w}^l(\pe)=w^l(s)$ and ${\tt v}^l(\pe)=v^l(s)$, where $s=s(\pe)$. Similarly, with a slight abuse of notation, we write $J_\Sigma w^l_j=J_\Sigma {\tt w}^l_j$ and $\Delta_\Sigma w^l_j=\Delta_\Sigma {\tt w}^l_j$.

We start with the case $l=1$. From (\ref{def-w_0}), we set 
$$
v^1_j(s):=w^0_j(s)+ w^1_j(s)=\left(j-\frac{k+1}{2}\right)w(s) + w^1_j(s),\qquad 1\le j\le k
$$
and from (\ref{eq-w}) we compute
\begin{equation}\notag
\begin{aligned}
E_{\delta,j}(v^1)&=\delta\left(j-\frac{k+1}{2}\right) J_\Sigma w+\delta J_\Sigma w^1_j-\gamma e^{-\sqrt{2}w}(e^{-\sqrt{2}(v^1_j-v^1_{j-1})}-e^{-\sqrt{2}(v^1_{j+1}-v^1_j)})=\\
&=\delta\left(j-\frac{k+1}{2}\right) J_\Sigma w+\delta J_\Sigma w^1_j-\delta\beta(s)w(e^{-\sqrt{2}(w^1_j-w^1_{j-1})}-e^{-\sqrt{2}(w^1_{j+1}-w^1_j)}),\qquad\forall\, 1\le j\le k.
\end{aligned}
\end{equation}
Now we choose $w^1$ in such a way that
$$E_{\delta,j}(v^1)=\delta J_\Sigma w^1_j,$$
that is
$$\left(j-\frac{k+1}{2}\right) J_\Sigma w-\beta(s)w(e^{-\sqrt{2}(w^1_j-w^1_{j-1})}-e^{-\sqrt{2}(w^1_{j+1}-w^1_j)})=0,\qquad\forall\, 1\le j\le k.$$
In order to do so, setting
\begin{equation}
\label{def-a_j}
a^1_0=a^1_k:=0,\qquad a^1_j:=e^{-\sqrt{2}(w^1_{j+1}-w^1_j)},\qquad\forall\, 1\le j\le k-1,
\end{equation}
we have to solve the algebraic system
$$a^1_{j-1}-a^1_j=\left(j-\frac{k+1}{2}\right)\frac{J_\Sigma w}{\beta(s)w},\qquad\forall\, 1\le j\le k.$$
Applying Remark \ref{rem-matrix} with $b_1=\dots=b_{k-1}=1$, it is possible to compute
\begin{equation}
\label{est-a_j}
a^1_j=\frac{j(k-j)}{2}\frac{J_\Sigma w}{\beta w}=\frac{j(k-j)}{2}\left(1+\frac{\Delta_\Sigma w}{\beta w}\right),\qquad 1\le j\le k-1
\end{equation}
so that
$$w^1_{j+1}-w^1_j=-\frac{1}{\sqrt{2}}\log\left(\frac{j(k-j)}{2}\left(1+\frac{\Delta_\Sigma w}{\beta w}\right)\right),\qquad 1\le j\le k-1.
$$

Imposing the condition
$$
\sum_{j=1}^k w^1_j=0
$$
we solve a linear algebraic system for the function $w^1=(w^1_1,\dots,w^1_k)$. Using (\ref{asympLambertfnzeroinfty}) and (\ref{est-der-w}) we see that
$$|\partial^{(i)}w^1_j|\le\frac{C_i}{(1+s)^i}, \qquad\forall\,1\le j\le k,\, s\ge 0,\, i\ge 0
$$
and, in particular,
$$
|E_{\delta,j}(v^1)|=\delta|J_\Sigma w^1_j|\le \frac{C_1 \delta}{(s+1)^2},\qquad\forall\,1\le j\le k
$$
so that (\ref{est_approx_sol}) holds.\\

In order to conclude the proof, we need to prove the step $l>1$. First we note that
$$E_{\delta,j}(v^{l})=E_{\delta,j}(v^{l-1})+\delta J_\Sigma w^l_j- e^{-\sqrt{2}(v^{l-1}_j-v^{l-1}_{j-1})}(e^{-\sqrt{2}(w^l_j-w^l_{j-1})}-1)+
e^{-\sqrt{2}(v^{l-1}_{j+1}-v^{l-1}_j)}(e^{-\sqrt{2}(w^l_{j+1}-w^l_j)}-1)$$
for $1\le j\le k$. We want to get
$$E_{\delta,j}(v^l)=\delta J_\Sigma w^l_j.$$
In order to do so, setting
$$a^l_0=a^l_k=b^l_0=b^l_k=0,\qquad a^l_j:=e^{-\sqrt{2}(w^l_{j+1}-w^l_j)}-1,\qquad b^l_{j}:=e^{-\sqrt{2}(v^{l-1}_{j+1}-v^{l-1}_j)}, \qquad 1\le j\le k-1,\, l\ge 2$$
we have to solve the system
\begin{equation}
\label{syst-a^l}
b^l_{j-1} a^l_{j-1}-b^l_{j} a^l_j=E_{\delta,j}(v^{l-1}), \qquad 1\le j\le k-1.
\end{equation}
with respect to $a^l:=(a^l_1,\dots,a^l_{k-1})$. This is possible thanks to Remark \ref{rem-matrix} applied with $b_j:=b^l_j>0$, $1\le j\le k-1$. To justify this statemente, observe that the right hand side
$$E_{\delta,j}(v^{l-1})=\delta J_\Sigma w^{l-1}_j,$$
satisfies the required orthogonality condition. Indeed, by the inductive hypothesis 
$$\sum_{j=1}^k w^{l-1}_j=0,
$$
and hence
$$\sum_{j=1}^k E_{\delta,j}(v^{l-1})=\sum_{j=1}^k\delta J_\Sigma w^{l-1}_j=\delta J_\Sigma\Big(\sum_{j=1}^k w^{l-1}_j\Big)=0.
$$

As a consequence, the function $w^l=(w^l_1,\dots,w^l_k)$ is uniquely determined by the algebraic system
\begin{equation}
\label{def-w^l}
\begin{aligned}
w^l_{j+1}-w^l_j&=-\frac{1}{\sqrt{2}}\log(1+a^l_j),\qquad 1\le j\le k-1\\
\sum_{j=1}^k w^l_j&=0
\end{aligned}
\end{equation}
Estimates (\ref{est_approx_sol}) and (\ref{est_err_j}) follow from Remark \ref{rem-matrix}, the inductive hypothesis, which gives
$$|\partial_s^{(i)}w^{l-1}_j|\le \frac{C_{l-1}\delta}{(s+1)^2\log(s^2+2)^{\frac{l-2}{2}}|\log\delta|^{\frac{l-2}{2}}},\qquad \forall \,s\ge 0,\, 1\le j\le k,\, i\ge 0,$$
the fact that
\begin{equation}\notag
\begin{aligned}
b^{l}_j&=e^{-\sqrt{2}w}e^{-\sqrt{2}(w^{1}_{j+1}-w^{1}_j)}(1+o(1))=\\
&=\gamma^{-1}\delta \beta(s)we^{-\sqrt{2}(w^{1}_{j+1}-w^{1}_j)}(1+o(1))=wO\left(\frac{C_l\delta}{(s+1)^2}\right),\qquad 1\le j\le k-1
\end{aligned}
\end{equation}
and differentiating relations (\ref{def-w^l}) and (\ref{syst-a^l}). This completes the proof of the lemma.
\end{proof}

\subsection{A decoupling argument}\label{subs-decoupling}

In this section we consider system (\ref{syst-JT-sol-appr}) where ${\tt v}:={\tt v}^l$ is the approximate solution constructed in Subsection \ref{subs-approx-sol}. Let $l>1$  be arbitrary, but fixed, and to be specified later in Subsection \ref{subs-fixed-point-JT}.

Next, motivated by the behaviour of the error $E_{\delta,j}({\tt v})$ described in \eqref{est_err_j}, we introduce the suitable function spaces for our framework.

For $\beta \in (0,1)$, $\mu>0$, $\varrho\in\R$ and a function ${\tt f}:\Sigma\to\R$, we introduce the norms
\begin{equation}\label{decayingLinftynorm}
\|{\tt f}\|_{*,\mu,\varrho}:=\sup_{\pe\in\Sigma}(s(\pe)^2+2)^{\frac{\mu}{2}} \log(s(\pe)+2)^\varrho\|{\tt f}\|_{L^\infty(B_1(\pe))}.
\end{equation}

\begin{equation}\label{decayingHoldernorm}
\|{\tt f}\|_{\mathcal{D}^{0,\beta}_{\mu,\varrho}(\Sigma)}:=\sup_{\pe\in\Sigma}(s(\pe)^2+2)^{\frac{\mu}{2}}\log(s(\pe)+2)^\varrho\|{\tt f}\|_{C^{0,\beta}(B_1(\pe))}
\end{equation}
and we consider the Banach space $\mathcal{D}^{0,\beta}_{\mu,\varrho}(\Sigma)$ defined as the space of $O(m)\times O(n)$-invariant functions ${\tt f}\in C^{0,\beta}_{loc}(\Sigma)$ for which the norm
\begin{equation}\label{functionspacerighthandsideJT}
\|{\tt f}\|_{\mathcal{D}^{0,\beta}_{\mu,\varrho}(\Sigma)}< \infty.
\end{equation}

We also consider the Banach space $\mathcal{D}^{2,\beta}_{\mu,\varrho}(\Sigma)$ defined as the space of $O(m)\times O(n)-$invariant functions ${\tt q}\in C^{2,\beta}_{loc}(\Sigma)$ for which the norm
\begin{equation}\label{eq_norm_q(s)}
\|{\tt q}\|_{\mathcal{D}^{2,\beta}_{\mu,\varrho}(\Sigma)}:=\sigma^{-1}\|D^2_\Sigma{\tt q}\|_{\mathcal{D}^{0,\beta}_{\mu+2,\varrho-1}(\Sigma)}+\|\nabla_{\Sigma}{\tt q}\|_{*,\mu +1,\varrho}+\|{\tt q}\|_{*,\mu,\varrho}<\infty.
\end{equation}

We note that system (\ref{syst-JT-sol-appr}) can be rewritten as
\begin{equation}
\label{syst_lin_JT_Sigma}
\mathcal{L}_{\delta,j}(\q)=\delta\mathcal{F}_{\delta,j}(\cdotp,\q)
\end{equation}
where 
\begin{equation}
\label{def-remainder-JT}
\delta\mathcal{F}_{\delta,j}(\cdotp,\q):=-E_{\delta,j}({\tt v})+\gamma \ta_{j-1}Q(\q_j-\q_{j-1})-\gamma \ta_j Q(\q_{j+1}-\q_j),\qquad 1\le j\le k,
\end{equation}
the functions ${\ta_j}$ are defined in \eqref{errorJacToda}, $Q$ is defined in \eqref{syst-JT-sol-appr} and $\q:=(\q_1,\dots,\q_k):\Sigma\to\R^k$ is a vectorial $O(m)\times O(n)$-invariant function, with $\q_j\in\mathcal{D}^{2,\beta}_{0,\varrho-\frac{1}{2}}(\Sigma)$, with $\varrho>0$ to be chosen later depending only on $l$.

Setting
\begin{equation}\notag
\begin{aligned}
\hat{\q}_j:&=\q_{j+1}-\q_j\qquad 1\le j\le k-1\\
\hat{\q}_k:&=\sum_{j=1}^k \q_j
\end{aligned}
\end{equation}
or equivalently $\hat{\q}:=B\q$ where $B$ is the $k\times k$ matrix given by
\begin{equation}
\label{def-B}
B:=\begin{bmatrix}
-1 & 1 &\dots &0\\
0 & -1 & 1 &\red{\vdots}\\
0 & \dots & -1 & 1\\
1 &\dots & \dots & 1
\end{bmatrix},
\end{equation}
the vector valued function $\hat{\q}$ must satisfy the system
\begin{equation}
\label{syst-hat}
\begin{aligned}
\delta J_\Sigma \hat{\q}_j+\gamma\sqrt{2}(-\ta_{j-1}\hat{\q}_{j-1}+2\ta_j\hat{\q}_j-\ta_{j+1}\hat{\q}_{j+1})&=
\delta\hat{\mathcal{F}}_{\delta,j}(\cdotp,B^{-1}\hat{\q})\qquad 1\le j\le k-1\\
J_\Sigma \hat{\q}_k&= 0
\end{aligned}
\end{equation}
in $\Sigma$, where $\hat{\mathcal{F}}_{\delta}(\cdotp,\q):=B \mathcal{F}_{\delta}(\cdotp,\q)$. We note that $\hat{\mathcal{F}}_{\delta,k}(\cdotp,\q)=0$, so that we can choose $\hat{\q}_k=0$. Setting $\q^\sharp:=(\hat{\q}_1,\dots,\hat{\q}_{k-1}):\Sigma\to\R^{k-1}$ and defining the $(k-1)\times(k-1)$ matrix
\begin{equation}
\label{def-A}
\mathcal{A}:=\begin{bmatrix}
2\ta_1 & -\ta_2 & \dots & 0 \\
-\ta_1 & 2\ta_2 & \ta_3 &\dots\\
\dots & -\ta_{k-3} & 2\ta_{k-2} & -\ta_{k-1}\\
0 &\dots & -\ta_{k-2} & 2\ta_{k-1}
\end{bmatrix},
\end{equation}
then it is possible to rewrite (\ref{syst-hat}) as
\begin{equation}
\label{syst-sharp}
\delta J_\Sigma \q^\sharp+\gamma\sqrt{2}\mathcal{A}\q^\sharp=\delta\mathcal{F}^\sharp_{\delta}(\cdotp,B^{-1}(\q^\sharp,0))
\end{equation}
where $\mathcal{F}^\sharp_{\delta}(\cdotp,\q):=(\hat{\mathcal{F}}_{\delta,1}(\cdotp,\q),\dots,\hat{\mathcal{F}}_{\delta,k-1}(\cdotp,\q))$.\\

For future purposes we write $\mathcal{A}=CD$, where $C$ and $D$ are $(k-1)\times (k-1)$ matrices defined by
$$C:=\begin{bmatrix}
2 & -1 \dots &\dots\\
-1 & 2 & -1 &\dots\\
\dots & -1 & 2 & -1\\
\dots &\dots & -1 & 2
\end{bmatrix},$$
and $D:=diag(\ta_j)_{1\le j\le k-1}$.
\begin{lemma}
\label{lemma-A}
Using the $s$ variable, that is setting $a_j(s):=\ta_j(\pe)$, we have
$$a_0=a_k=0,\qquad a_j=\delta\gamma^{-1}\beta(s)w(s)\left(\frac{j(k-j)}{2}
+O\left(\frac{1}{\log(s^2+2)^{\frac{1}{2}}|\log\delta|^{\frac{1}{2}}}\right)\right),\qquad 1\le j\le k-1.$$
In particular, $a_j>0$ for $1\le j\le k-1$ and $\delta>0$ small enough.
\end{lemma}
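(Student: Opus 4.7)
The plan is to unwind the definition $\ta_j = e^{-\sqrt{2}({\tt v}^l_{j+1}-{\tt v}^l_j)}$ by isolating the contribution of each $\w^i$ in the telescoping decomposition ${\tt v}^l = \w^0 + \w^1 + \dots + \w^l$, then multiply the pieces back together. First I would write
\begin{equation*}
a_j(s) = e^{-\sqrt{2}(w^0_{j+1}-w^0_j)}\,\cdot\,e^{-\sqrt{2}(w^1_{j+1}-w^1_j)}\,\cdot\,\prod_{i=2}^{l}e^{-\sqrt{2}(w^i_{j+1}-w^i_j)},
\end{equation*}
and handle the three factors in turn. The cases $j=0$ and $j=k$ are immediate from the convention $\w^i_0=-\infty$, $\w^i_{k+1}=+\infty$.

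Since $\w^0_{j+1}-\w^0_j = w(s)$, the first factor is $e^{-\sqrt{2}w(s)}$, which by the defining identity \eqref{eq-w} is exactly $\delta\gamma^{-1}\beta(s)w(s)$. This produces the overall prefactor appearing in the statement. For the second factor, recall from the construction in the proof of Lemma \ref{lemmaapproximateslnJacToda} (specifically formula \eqref{est-a_j} and the definition \eqref{def-a_j}) that
\begin{equation*}
e^{-\sqrt{2}(w^1_{j+1}-w^1_j)} = \frac{j(k-j)}{2}\Bigl(1+\frac{\Delta_\Sigma w}{\beta w}\Bigr).
\end{equation*}
Thus the leading coefficient $j(k-j)/2$ is already built in, and what remains is to estimate $\Delta_\Sigma w/(\beta w)$ and the higher order factors.

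To estimate $\Delta_\Sigma w/(\beta w)$ I would use the derivative bound \eqref{est-der-w}, which gives $|\Delta_\Sigma w| \le C/(1+s)^2$ after combining with the boundedness of the coefficient $\alpha(s)$ along $\Sigma$ (recall $\alpha = (m-1)a'/a + (n-1)b'/b$ is uniformly bounded since $\Sigma$ is asymptotic to $C_{m,n}$ and $\beta(s)\sim c/s^2$ at infinity). On the other hand, from the Lambert asymptotics \eqref{asympLambertfnzeroinfty} applied to $w(s) = \tfrac{1}{\sqrt 2}W(\gamma\sqrt 2/(\delta\beta(s)))$, one has $w(s)\ge c(|\log\delta|+\log(s^2+2))$ and $\beta(s)\sim c/(1+s)^2$, so $\beta w\gtrsim (|\log\delta|+\log(s^2+2))/(1+s)^2$. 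Dividing the two bounds yields
\begin{equation*}
\Bigl|\frac{\Delta_\Sigma w}{\beta w}\Bigr| \le \frac{C}{|\log\delta|+\log(s^2+2)} \le \frac{C}{|\log\delta|^{1/2}\log(s^2+2)^{1/2}},
\end{equation*}
which is subsumed by the error term in the statement.

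For the remaining factors with $i\ge 2$, apply estimate \eqref{est_approx_sol} with $i=0$:
\begin{equation*}
|w^i_{j+1}-w^i_j| \le \frac{C}{(\log(s^2+2))^{(i-1)/2}|\log\delta|^{(i-1)/2}},
\end{equation*}
which for $i\ge 2$ is of order $(\log(s^2+2)|\log\delta|)^{-1/2}$ or smaller. Taylor expanding each exponential, the product $\prod_{i=2}^{l}e^{-\sqrt{2}(w^i_{j+1}-w^i_j)}$ equals $1 + O\bigl((\log(s^2+2)|\log\delta|)^{-1/2}\bigr)$. Multiplying the three contributions gives the claimed asymptotic formula. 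Finally, positivity of $a_j$ for $1\le j\le k-1$ and $\delta$ small is immediate from positivity of $\beta w$ and the dominance of $j(k-j)/2>0$ over the vanishing error term. The only mild technical point is keeping track of the weight $(\log(s^2+2)|\log\delta|)^{-1/2}$ uniformly in $s\ge 0$ (so that the estimate is global rather than asymptotic), but this follows from the definitions without further difficulty.
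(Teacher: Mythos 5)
Your argument is correct and is essentially the paper's own proof: the same factorization of $a_j$ into $e^{-\sqrt{2}w}$ times the $w^1$-factor (computed from \eqref{def-a_j}--\eqref{est-a_j}) times the $i\ge 2$ factors controlled by \eqref{est_approx_sol}, with the identity \eqref{eq-w} producing the prefactor $\delta\gamma^{-1}\beta(s)w(s)$. The only quibble is your parenthetical claim that $\alpha(s)$ is uniformly bounded: since $b(0)=0$ for the profile curve, $\alpha(s)$ blows up like $(n-1)/s$ as $s\to 0$; the bound $|\Delta_\Sigma w|\le C(1+s)^{-2}$ nevertheless holds because $w$ is a smooth, even, $O(m)\times O(n)$-invariant function on $\Sigma$, so $w'(s)=O(s)$ near $s=0$, and this does not affect the rest of your argument.
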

\begin{proof}
We note that, using definition of $\ta_j$ given in (\ref{errorJacToda}), the definition of the approximate solution ${\tt v}$ given by (\ref{def-w_0}) and (\ref{recursiveApproxslnJacToda}),  and the definition of $w$ given by (\ref{def-w}) we have
\begin{equation}
\begin{aligned}
a_j&=e^{-\sqrt{2}(v_{j+1}-v_{j})}=e^{-\sqrt{2}w}e^{-\sqrt{2}(w^1_{j+1}-w^1_{j})}e^{-\sqrt{2}\sum_{i=2}^l(w^i_{j+1}-w^i_{j})}\\
&=\delta\gamma^{-1} \beta(s) w e^{-\sqrt{2}(w^1_{j+1}-w^1_{j})}e^{-\sqrt{2}\sum_{i=2}^l(w^i_{j+1}-w^i_{j})}
\end{aligned}
\end{equation}
Recalling relations (\ref{def-a_j}) and (\ref{est-a_j}), we can see that
$$e^{-\sqrt{2}(w^1_{j+1}-w^1_{j})}=\frac{j(k-j)}{2}\left(1+\frac{\Delta_\Sigma w}{\beta(s)w}\right)=\frac{j(k-j)}{2}\left(1+O\left(\frac{1}{\log(s^2+2)^{\frac{1}{2}}|\log\delta|^{\frac{1}{2}}}\right)\right),$$
thanks to the asymptotic behaviour of $w$, given by (\ref{asympLambertfnzeroinfty}) and (\ref{est-der-w}). Moreover, by (\ref{est_approx_sol}) we have
\begin{equation}\notag
e^{-\sqrt{2}\sum_{i=2}^l(w^i_{j+1}-w^i_j)}=1+O\left(\frac{1}{\log(s^2+2)^{\frac{1}{2}}|\log\delta|^{\frac{1}{2}}}\right),\qquad 1\le j\le k-1,
\end{equation}
which concludes the proof.
\end{proof}
We note that the constant matrix $\mathcal{A}_0:=C\,diag(\frac{j(k-j)}{2})_{1\le j\le k-1}$, has $k-1$ eigenvalues $\mu_1^0,\dots,\mu^0_{k-1}\in (0,\infty)$. Therefore, in view of Lemma \ref{lemma-A}, the eigenvalues of $\mathcal{A}$ can be expanded as
\begin{equation}
\mu_j=\delta\gamma^{-1}|A_\Sigma(\pe)|^2\w(\pe)(\mu^0_j+{\tt r}_j),\qquad \forall\, 1\le j\le k-1
\end{equation}
where ${\tt r}_j:\Sigma\to\R$ are smooth $O(m)\times O(n)$-invariant functions such that
$$|{\tt r}_j(\pe)|=|r_j(s)|\le \frac{C}{\log(s^2+2)^{\frac{1}{2}}|\log\delta|^{\frac{1}{2}}}.$$
In particular, $\mu_j>0$ for $\delta>0$ sufficiently small, and $\mathcal{A}$ is similar to $M:=diag(\mu_j)_{1\le j\le k}$, in the sense that there exists a  invertible matrix $V$, depending on $\delta$ and $s(\pe)$, with smooth and bounded, $O(m)\times O(n)$-invariant entries and such that $A=V M 
V^{-1}$.

Setting $\q^\sharp=V\tilde{\q}$, system (\ref{syst-sharp}) is equivalent to
\begin{equation}
\label{syst-tilde-q}
\Delta_\Sigma \tilde{\q}_j+|A_\Sigma|^2(1+\sqrt{2}(\mu^0_j+{\tt r}_j) \w)\tilde{\q}_j
=\mathcal{G}_{\delta,j}(\tilde{\q}),\qquad\forall \,1\le j\le k-1
\end{equation}
where
\begin{equation}
\label{def-G}
\mathcal{G}_{\delta,j}(\tilde{\q}):=(V^{-1}\mathcal{F}^\sharp_{\delta})_{j}(\cdotp,B^{-1}(V\tilde{\q},0)),\qquad\forall \,1\le j\le k-1
\end{equation}


First we focus on the linear problem
\begin{equation}
\label{eq_lin_JT_Sigma}
\Delta_\Sigma \q+|A_\Sigma|^2(1+\sqrt{2}(\mu^0_j+{\tt r}_j)\w)\q={\tt f}\qquad\text{in $\Sigma$,}
\end{equation}
where $\q,\,{\tt f}:\Sigma\to\R$ are real valued functions.

\medskip
The following proposition shows that \eqref{eq_lin_JT_Sigma} has an inverse and also it accounts for its size respect to $\sigma=\ln\big(\frac{1}{\delta}\big)$, for $\delta>0$ small.

\begin{proposition}\label{propinverselinearJT}
Let $\beta \in (0,1)$ and $\varrho\ne\frac{3}{4}$. There exist $\sigma_0>0$ and a constant $C>0$ such that for any $\sigma>\sigma_0$ and any ${\tt f}\in \mathcal{D}^{0,\beta}_{2,\varrho}(\Sigma)$, equation \eqref{eq_lin_JT_Sigma} has a solution ${\tt q}:={\tt F}_1({\tt f})\in\mathcal{D}^{2,\beta}_{0,\varrho-\frac{1}{2}}(\Sigma)$ satisfying the estimate
\begin{equation}
\label{prop_eq_lin_JT_Sigma}
\|{\tt q}\|_{\mathcal{D}^{2,\beta}_{0,\varrho-\frac{1}{2}}(\Sigma)}\le  c\sigma^{\frac{3}{4}}\log(\sigma)\|{\tt f}\|_{\mathcal{D}^{0,\beta}_{2,\varrho}(\Sigma)}.
\end{equation}
\end{proposition}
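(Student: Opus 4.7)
The plan is to reduce \eqref{eq_lin_JT_Sigma} to a one-dimensional ODE using the $O(m)\times O(n)$-invariance, build an inverse for the pure Jacobi operator from the fields constructed in Proposition \ref{prop_Jacobi_Sigma}, and then absorb the additional potential $|A_\Sigma|^2\w$ by a fixed-point iteration, with a careful analysis to extract the sharp $\sigma^{3/4}\log\sigma$ dependence.

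First I would use $O(m)\times O(n)$-invariance to write $\q(\pe) = q(s)$ and ${\tt f}(\pe) = f(s)$ with $s = s(\pe)$, so that \eqref{eq_lin_JT_Sigma} becomes
\begin{equation*}
q''(s) + \alpha(s)\, q'(s) + \beta(s)\bigl(1 + \sqrt{2}(\mu_j^0 + r_j(s))\, w(s)\bigr)\, q(s) = f(s), \qquad s \in \R,
\end{equation*}
with $\alpha,\beta$ as in \eqref{def:Asigma}. After the Emden-Fowler substitution $s = e^t$, $q(s) = p(t) u(t)$ with $p$ from \eqref{Cancellingfirstderterm1}, the first-order term vanishes and one arrives at $u''(t) + V(t)\, u(t) + P(t)\, u(t) = \tilde f(t)$, where $V$ is the Jacobi potential in \eqref{righthandandpotential} and $P(t) := \sqrt{2}(\mu_j^0 + r_j(e^t))\, e^{2t}\beta(e^t)\, w(e^t)$ decays exponentially as $t \to -\infty$ and grows like $c_0(t+\sigma)$ as $t \to +\infty$ for some $c_0 > 0$.

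Second, I would invert the pure Jacobi operator. The two $O(m)\times O(n)$-invariant Jacobi fields from Proposition \ref{prop_Jacobi_Sigma} define $\tilde v_\pm(t) := v_\pm(e^t)/p(t)$, and variation of parameters together with their sharp $s^{\gamma_\pm}$ asymptotics produce a Green's function $G_0(t,t')$ and a continuous inverse $J_\Sigma^{-1}: \mathcal{D}^{0,\beta}_{2,\varrho}(\Sigma) \to \mathcal{D}^{2,\beta}_{0,\varrho'}(\Sigma)$ for an appropriate $\varrho' < \varrho$ whenever $\varrho \ne 3/4$; the excluded value corresponds to a resonance between the logarithmic weight and the indicial exponents $\gamma_\pm$ at which the variation-of-parameters integral barely fails to belong to the target class. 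Equation \eqref{eq_lin_JT_Sigma} is then equivalent to the fixed-point problem
\begin{equation*}
\q = J_\Sigma^{-1}({\tt f}) - J_\Sigma^{-1}\bigl(\sqrt{2}\,(\mu_j^0 + {\tt r}_j)\,|A_\Sigma|^2\,\w\, \q\bigr),
\end{equation*}
and existence reduces to showing that the correction is a contraction in a suitable ball of $\mathcal{D}^{2,\beta}_{0,\varrho-1/2}(\Sigma)$.

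The main obstacle is the sharp $\sigma^{3/4}\log\sigma$ constant, since a direct Neumann series based only on $\w \le C(\sigma + \log(s+2))$ loses a full power of $\sigma$. I would get the improvement by splitting $\Sigma$ at the threshold $s_\star \sim \sigma^{1/2}$, at which $|A_\Sigma|^2\w$ is of unit size. Inside $\{s \le s_\star\}$ the potential dominates and a maximum principle comparison with a suitable barrier gives the pointwise bound $|\q| \lesssim |{\tt f}|/(|A_\Sigma|^2\w)$, while outside $\{s \ge s_\star\}$ the inverse $J_\Sigma^{-1}$ acts as in the unperturbed case. Interpolating the two bounds across the interface produces the exponent $3/4$, the $\log\sigma$ factor arises from matching Hölder norms at $s_\star$, and the half-log loss $\varrho \to \varrho - 1/2$ in the target weight absorbs the mismatch of logarithmic powers. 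Once $\|\q\|_{*,0,\varrho-1/2}$ is under control, the full $\mathcal{D}^{2,\beta}_{0,\varrho-1/2}$-norm follows from interior Schauder estimates.
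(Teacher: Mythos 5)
Your overall reduction to a one--dimensional ODE via the $O(m)\times O(n)$ symmetry and the Emden--Fowler substitution matches the paper, but the core of your argument has a genuine gap: the operator in \eqref{eq_lin_JT_Sigma} is $\Delta_\Sigma+V$ with a \emph{large positive} potential $V=|A_\Sigma|^2(1+\sqrt{2}(\mu^0_j+{\tt r}_j)\w)$, and such an operator does not satisfy a maximum principle. Your inner-region step ``the potential dominates and a maximum principle comparison with a barrier gives $|\q|\lesssim|{\tt f}|/(|A_\Sigma|^2\w)$'' would be valid for $\Delta_\Sigma-V$, not for $\Delta_\Sigma+V$; here the homogeneous solutions oscillate (this same positivity of the potential is what produces the infinite Morse index in Section 5), so no barrier bound of that form is available, and the bound you claim would in fact be far stronger than the true estimate \eqref{prop_eq_lin_JT_Sigma}, which loses $\sigma^{3/4}\log\sigma$. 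The outer-region claim is also off: for $s\gtrsim\sigma^{1/2}$ the Emden--Fowler potential is $Q(t)\sim\sigma+t$, so the equation is in an Airy/WKB regime with oscillating solutions of amplitude $Q^{-1/4}$, nothing like the power-law Jacobi fields $s^{\gamma_\pm}$; hence ``$J_\Sigma^{-1}$ acts as in the unperturbed case'' there is not correct, and the perturbative fixed-point $\q=J_\Sigma^{-1}({\tt f})-J_\Sigma^{-1}(\sqrt2(\mu^0_j+{\tt r}_j)|A_\Sigma|^2\w\,\q)$ has no smallness in any region (the extra potential exceeds the Jacobi potential by a factor $\sigma$ everywhere). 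Finally, the exponent $3/4$ and the $\log\sigma$ are asserted to come from ``interpolation across the interface'' and ``matching H\"older norms,'' but no mechanism is given; these numbers are not produced by weight bookkeeping.

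What the paper actually does is a direct ODE analysis of the \emph{full} potential $Q(t)>0$ in three regimes, and this is where the constant comes from: for $t\le t_\sigma\approx-\tfrac12\log\sigma$ (i.e.\ $s\lesssim\sigma^{-1/2}$, not $s\sim\sigma^{1/2}$) a Volterra/Gronwall argument shows the homogeneous solutions are close to $t$ and $-1$ with Wronskian close to $1$; on the transition interval $(t_\sigma,T_0)$, of length $\tfrac12\log\sigma+O(1)$ and where $\partial_tQ>0$, the monotone Lyapunov energy $H=\frac{(\partial_tv)^2}{2Q}+\frac{v^2}{2}$ controls $|v|$ and gives $|\partial_tv|\lesssim\sqrt\sigma$, which after variation of parameters yields the factor $\sigma^{1/2}\log\sigma$; and for $t>T_0$ the Liouville--Green change of variables $\xi=\int_{T_0}^tQ^{1/2}$, $v=Q^{-1/4}w(\xi)$ turns the equation into $\partial_\xi^2w+(1+{\tt V})w=0$ with ${\tt V}\in L^1$, and Gronwall plus the normalization at $t=T_0$ costs the additional $\sigma^{1/4}$. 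Only after these amplitude estimates does variation of parameters with matching across the three intervals give the inverse and the bound $\sigma^{3/4}\log\sigma$ (the restriction $\varrho\ne\tfrac34$ enters in the final weighted integrals, as in Proposition 3.1 of \cite{AKR1}). To repair your proof you would have to abandon the barrier/contraction framework and carry out this kind of oscillatory ODE analysis (energy monotonicity plus WKB), i.e.\ essentially the paper's argument.
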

In the case $m\ge 2,\,n\ge 3$, Proposition \ref{propinverselinearJT} is proved exactly as Proposition $3.1$ in \cite{AKR1}. The only difference is given by the function multiplying $\w$, but it is totally irrelevant since the asymptotic behaviour as $\delta\to 0$ and $s\to\infty$ is the same as in that case. We give a detailed proof in the case $n=2$ in subsections \ref{subs-est-neg-pot}, \ref{subs_transI} and \ref{subs-large-t}.

Using the symmetries and introducing the Emden-Fowler change of variables $s=e^t$ and setting
\begin{equation}
\begin{aligned}
\tilde{\alpha}(t)&:=\alpha(e^t)e^t-1,\quad\tilde{\beta}(t):=e^{2t}\beta(e^t),\\
\tilde{w}(t)&:=1+\sqrt{2}(\mu^0_j+r_j(e^t))w(e^t),\quad Q(t):=\frac{\partial^2_t p}{p}+\tilde{\alpha}\frac{\partial^2_t p}{p}+\tilde{w}\tilde{\beta}>0,
\end{aligned}
\end{equation}
we reduce ourselves to study an equation of the form
\begin{equation}
\label{eq-Jacobi-Toda-lin-EF}
\partial^2_t v+Q(t)v=\tilde{f}(t)=\frac{e^{2t}}{p(t)}f(e^t).
\end{equation}
Using that
\begin{equation}
\tilde{w}(t)= 
\begin{cases}
\sigma+O(1)\qquad\text{as $t<T_0$}\\
\sigma+2\mu_0^j t+O(\log t)\qquad\text{as $t>T_1$}
\end{cases}
\end{equation}
we get
\begin{equation}
Q(t)=
\begin{cases}
c_0 e^{2t}+O(e^{2t})+O(\sigma e^{4t})\qquad\text{as $t<T_0$}\\
(N-1)(\sigma+2t)+O(\log t)+O(\sigma e^{-t})+O(te^{-t})\qquad\text{as $t>T_1$}
\end{cases}
\end{equation}
and these relations can be differentiated. First we look for two linearly independent solutions $v$ and $\tilde{v}$ to the homogeneous equation
\begin{equation}
\label{eq-Jacobi-Toda-lin-EF-hom}
\partial^2_t v+Q(t)v=0.
\end{equation}
In order to solve equation (\ref{eq-Jacobi-Toda-lin-EF-hom}) we will distinguish three regions, where
\begin{equation}\label{summaryregimesQ}
\left\{
\begin{aligned}
c_0\sigma e^{2t} & \le Q(t) \le C_0 \sigma e^{2t},&\qquad \partial_t Q>0, \qquad  t\le T_0\\
0<  c_1\sigma&\le Q(t) \le   C_1\sigma,& \qquad T_0< t\le T_1\\
c_2(\sigma+t) & \leq Q(t) \leq C_2(\sigma+t), \qquad & t>T_1.
\end{aligned}
\right.
\end{equation}

\subsection{Estimates for negative $t$}\label{subs-est-neg-pot}

We study equation (\ref{eq-Jacobi-Toda-lin-EF}) in $(-\infty,t_\sigma)$, with $t_\sigma<0$ to be chosen later, with $\sigma>0$ large and $v(t)=tx(t)$, so that $x$ has to satisfy the equation
$$\partial_t(t^2\partial_t x)=-Q(t)t^2 x(t),$$
which is equivalent to the system
\begin{equation}
\label{syst-neg-t}
\begin{cases}
\partial_t x=t^{-2}y,\qquad x(t_\sigma)=1\\
\partial_t y=-Q(t)t^2 x,\qquad y(t_\sigma)=0
\end{cases}
\end{equation}
in $(-\infty,t_\sigma)$. Integrating (\ref{syst-neg-t}) and using the initial conditions
\begin{equation}
\label{syst-xy-int}
x(t)=1-\int_t^{t_\sigma}\tau^{-2} y(\tau)d\tau,\qquad y(t)=\int_t^{t_\sigma} Q(\tau)\tau^2 x(\tau)d\tau,
\end{equation}
thus, by Fubini's Theorem,
\begin{equation}\notag
\begin{aligned}
x(t)&=1-\int_t^{t_\sigma} \tau^{-2}\int_\tau^{t_\sigma} Q(\xi)\xi^2 x(\xi)d\xi d\tau\\
&=1-\int_t^{t_\sigma}Q(\xi)\xi^2 x(\xi)\int_t^\xi \tau^{-2}d\tau d\xi,
\end{aligned}
\end{equation}
so that
$$|x(t)|\le 1-\int_t^{t_\sigma}Q(\xi)\xi|x(\xi)|d\xi$$
therefore, by The Gronwall inequality,
\begin{equation}\notag
|x(t)|\le\exp\left(-\int_t^{t_\sigma}Q(\xi)\xi d\xi\right).
\end{equation}
Moreover,
\begin{equation}\notag
\begin{aligned}
-\int_t^{t_\sigma}Q(\xi)\xi d\xi&\le c\sigma\int_t^{t_\sigma}(-\xi)e^{2\xi} d\xi\le \\
&c\sigma\left[-\frac{1}{2}\xi e^{2\xi}\right]^{t_\sigma}_t+c\sigma\int_t^{t_\sigma}\frac{1}{2}e^{2\xi}d\xi<\\
&c\sigma\left(-t_\sigma e^{2t_\sigma}+\frac{1}{2}e^{2t_\sigma}\right)\le ce^{-2M}
\end{aligned}
\end{equation}
provided $t_\sigma<0$ is such that $-2\sigma t_\sigma e^{2t_\sigma}=e^{-2 M}$, with $M>0$ large to be determined later. Studying the inverse of the function $s\mapsto se^{-s}$ for $s>1$, it turns out that 
$$t_\sigma=-\frac{1}{2}\log\sigma(1+o(1)).$$ 
Hence $x\in L^\infty(-\infty,t_\sigma)$ and 
$$\|x\|_{L^\infty(-\infty,t_\sigma)}\le C,$$
for some constant $C>0$. Integrating (\ref{syst-xy-int}), we can see that $y\in L^\infty(-\infty,t_\sigma)$ and
$$\|y\|_{L^\infty(-\infty,t_\sigma)}\le c e^{-2M}(\log\sigma),$$
so that $\partial_t x\in L^\infty(-\infty,t_\sigma)$ and
$$|\partial_t x|\le ce^{-2M}|t|^{-1}.$$
In terms of $v$, this yields that
\begin{equation}
\begin{aligned}
v(t)&=t(1+O(e^{-2M}))<0\\
\partial_t v(t)&=1+O(e^{-2M}).
\end{aligned}
\end{equation}
for $t<t_\sigma$. We stress that
\begin{equation}
v(t_\sigma)=t_\sigma,\qquad \partial_t v(t_\sigma)=1.
\end{equation}
Another linearly independent solutions to (\ref{eq-Jacobi-Toda-lin-EF-hom}) is given by
\begin{equation}\notag
\tilde{v}(t)=v(t)\int_{-\infty}^t (v(\tau))^{-2} d\tau,
\end{equation}
so that
\begin{equation}
\begin{aligned}
\tilde{v}(t)&=-1+O(e^{-2M})\\
\partial_t \tilde{v}(t)&=O(t^{-1}e^{-2M}).
\end{aligned}
\end{equation}
In particular the Wronskian fulfils
\begin{equation}
W(t)\equiv W(t_\sigma)\to 1\qquad\text{as $\sigma\to\infty$.}
\end{equation}
We finished this section by  solving (\ref{eq-Jacobi-Toda-lin-EF}) in $I_1$ via the integral formula
$$
u_1(t):= v_1(t)\int_{-\infty}^{t}W^{-1}(\tau)\tilde{v}_1(\tau)\tilde{f}(\tau)d\tau - \tilde{v}_1(t)\int_{-\infty}^{t}W^{-1}(\tau){v}_1(\tau)\tilde{f}(\tau)d\tau
$$
for $t\in I_0$ from where it is direct to verify that 
\begin{equation}\label{eq:estimatesU0I0}
|u_1(t)|+ |\partial_t u_1(t)| \leq c\|{\tt f}\|_{\mathcal{D}^{0,\beta}_{2,\varrho}(\Sigma)}e^{2t} \quad t\in I_1.
\end{equation}

\subsection{Estimates for the transition region}\label{subs_transI}

In this subsection, we consider the interval $I_2:=(t_\sigma,T_0)$, where the potential $Q(t)$ has the properties that 
$$
0<c_0 <Q(t)\leq c_1\sigma, \qquad  \partial_t Q(t)>0 \quad \hbox{in} \quad I_2 \quad \hbox{see e.g. \eqref{summaryregimesQ}}. 
$$

Observe also that $${\rm lenght}(I_2)=T_0 - t_\sigma=\frac{1}{2}\log(\sigma) + O(1).$$

\medskip

As in the previous subsection, we study the solutions of (\ref{eq-Jacobi-Toda-lin-EF-hom}) in $I_2$. Let $v(t)$ solve (\ref{eq-Jacobi-Toda-lin-EF-hom}) and consider the Lyapunov energy
$$
H(t):=\frac{1}{Q(t)}\frac{\bigl(\partial_t v(t)\bigr)^2}{2} + \frac{v^2(t)}{2} \quad t\in I_2. 
$$

The functional $H(t)$ is differentiable with
$$
\partial_t H(t)=-\frac{\partial_t Q(t)}{Q^2(t)}\frac{\bigl(\partial_t v(t)\bigr)^2}{2} \leq 0 \quad \hbox{in} \quad I_2
$$
and thus 
$$
0 \leq H(t) \leq H(t_{\sigma})\leq c\bigl(|v(t_{\sigma})|^2 +|\partial_t v(t_\sigma)|^2 \bigr) \quad \hbox{for} \quad t\in I_2.
$$ 

\medskip
We estimate for $t\in I_2$
\begin{equation}\label{eq:estimate_generic_kernelI2}
|v(t)| \leq c\bigl(|v(t_{\sigma})| + |\partial_t v(t_\sigma)|\bigr),\qquad |\partial_t v(t)| \leq  c\sqrt{\sigma}\bigl(|v(t_{\sigma})| + |\partial_t v(t_\sigma)|\bigr). 
\end{equation}

Next, we proceed exactly in the same fashion as we did in the previous subsection, first by fixing two solutions $v_2(t)$ and $\tilde{v}_2(t)$ of (\ref{eq-Jacobi-Toda-lin-EF-hom}) in $I_2$, satisfying the initial conditions
$$
\begin{aligned}
v_2(t_{\sigma})=& 1 \quad \partial_{t}v_2(t_{\sigma})=&0\\
\tilde{v}_2(t_{\sigma})=& 0 \quad \partial_{t}\tilde{v}_2(t_{\sigma})=&1
\end{aligned}
$$
with Wronskian determinant $W(t)=1$ in $I_2$. 

\medskip
Next,we set the variations of parameters formula
$$
\begin{aligned}
u_2(t)=& u_1(t_{\sigma})v_2(t) + \partial_t u_1(t_{\sigma})\tilde{v}_2(t)\\
& + v_2(t)\int_{t_{\sigma}}^t \tilde{v}_2(\tau)\tilde{f}(\tau)d\tau - \tilde{v}_2(t)\int_{t_{\sigma}}^t {v}_2(\tau)\tilde{f}(\tau)d\tau 
\end{aligned}$$
for $t\in I_2$.

\medskip
Again, proceeding similarly as 
we did for estimate (\ref{eq:estimatesU0I0}), we obtain
\begin{equation}\label{eq:estimatesU2I2}
|u_2(t)|+ |\partial_t u_2(t)| \leq c\sigma^{\frac{1}{2}}\log(\sigma)\|{\tt f}\|_{\mathcal{D}^{0,\beta}_{2,\varrho}(\Sigma)}e^{2t} \quad t\in I_2.
\end{equation}

\subsection{Estimates for large $t$}\label{subs-large-t}
Now we will study the behaviour of the solutions $v$, $\tilde{v}$ in the interval $(T_0,\infty)$. In this interval, we have $Q\ge 1$ and hence introduce the change of variables
$$
\xi(t)=\int_{T_0}^t Q(\tau)^{\frac{1}{2}}d\tau, \qquad\forall t\ge T_0.
$$

Write
\begin{equation}
\label{def_w}
v(t)=Q(t)^{-\frac{1}{4}}w(\xi(t)) \quad \hbox{for} \quad t \geq T_0 
\end{equation}
and observe from (\ref{eq-Jacobi-Toda-lin-EF-hom}) that $w$ must solve
\begin{equation}
\label{eq_w}
\partial^2_\xi w+(1+{\tt V}(\xi))w=0 \quad \hbox{in} \quad (0,\infty),
\end{equation}
where
$$
{\tt V}(\xi):=-\frac{\partial^2_t Q(t(\xi))}{4Q^2(t(\xi))}+\frac{5(\partial_t Q)^2(t(\xi))}{16 Q^3(t(\xi))}.
$$

Since $\partial_t Q$ and $\partial^2_t Q$ are bounded in $[T_0,\infty)$, we find from (\ref{summaryregimesQ}) that ${\tt V} \in L^1(0,\infty)$ and
\begin{equation}\label{V_L1-norm}
\begin{aligned}
\int_0^\infty |{\tt V}(\xi)|d\xi &=\int_{T_0}^\infty |{\tt V} (\xi(t))|Q^{\frac{1}{2}}(t)dt\\
&\le \int_{T_0}^{T_1}cdt+\int_{T_1}^\infty\frac{c}{(\sigma+t)^{\frac{3}{2}}}dt\\
&\le {\emph{K}}
\end{aligned}
\end{equation}

Moreover, differentiating (\ref{def_w}) and evaluating at $t=T_0$, we get
$$
w(0)=Q(T_0)^{\frac{1}{4}}v(T_0) \quad\hbox{and} \quad \partial_\xi w(0)=\frac{\partial_t Q(T_0)}{4Q(T_0)^{\frac{5}{4}}}v(T_0)-\frac{\partial_t v(T_\sigma)}{Q(T_0)^{\frac{1}{4}}},
$$
which together with (\ref{summaryregimesQ}) yield that
\begin{equation}
\label{w_initial}
\begin{aligned}
|w(0)|+|\partial_\xi w(0)|& \le C\sigma^{\frac{1}{4}}(|v(T_0)|+|\partial_t v(T_0)|).
\end{aligned}
\end{equation}

The aim now is to estimate $w$ and $\partial_\xi w$ in the interval $(T_0,\infty)$. In order to do so, we multiply \eqref{eq_w} by $\partial_{\xi} w$ to find that 
$$
\partial_{\xi}\big( \partial_{\xi } w\big)^2 + \partial_{\xi }\big( w\big)^2  = -2 {\tt V}(\xi)w \partial_{\xi}w 
$$
and a direct integration yields that 
$$
\begin{aligned}
\big( \partial_{\xi } w(\xi)\big)^2 + \big( w(\xi)\big)^2  &= \int_{0}^{\xi}-2 {\tt V}(z)w \partial_{z}wdz\\
& \leq \int_{0}^{\xi}|{\tt V}(z)|\bigg(\big( \partial_{\xi} w\big)^2 + \big( w\big)^2 \bigg)dz. 
\end{aligned}
$$
From the Gronwall inequality, \eqref{V_L1-norm} and \eqref{w_initial},
\begin{equation}
\label{est_w_dx}
|\partial_\xi w(\xi)|+|w(\xi)| \le c(|w(0)|+|\partial_\xi w(0)|)\le C\sigma^{\frac{1}{4}}(|v(T_0)|+|\partial_t v(T_0)|)
\end{equation}
for any $\xi>0$.

\medskip
Going back to $v$ and $\partial_t v$,
\begin{equation}\label{AAA}
|v(t)|+ |\partial_t v(t)|\le c\sigma^{\frac{1}{4}} Q(t)^{-\frac{1}{4}}(|v(T_0)|+|\partial_t v(T_0)|)
\end{equation}
for any $t>T_0$.

Now we proceed as in the previous subsections by selecting two solutions $v_3(t)$ and $\tilde{v}_3(t)$ of (\ref{eq-Jacobi-Toda-lin-EF}) in $I_3$, with 
$$
\begin{aligned}
v_3(T_0)=& 1 \quad \partial_{t}v_3(T_0)=&0\\
\tilde{v}_3(T_0)=& 0 \quad \partial_{t}\tilde{v}_3(T_0)=&1
\end{aligned}
$$
with Wronski determinant $W(t)=1$ in $I_3$. Clearly, $v_3(t)$ and $\tilde{v}_3(t)$ satisfy the estimate \eqref{AAA}.

\medskip
The same argument applied to $\tilde{v}$ yields for $t>T_0$ that
\begin{equation}\label{BBB}
\begin{aligned}
|\tilde{v}(t)|+ |\partial_t \tilde{v}(t)| &\le c\sigma^{\frac{1}{4}}(|\tilde{v}(T_0)|+|\partial_t \tilde{v}(T_0)|) Q(t)^{-\frac{1}{4}}
\end{aligned}
\end{equation}
Next, we solve equation (\ref{eq-Jacobi-Toda-lin-EF}) in $I_3$ setting the variations of parameters formula
$$
\begin{aligned}
u_3(t)=& u_2(T_0)v_3(t) + \partial_t u_2(T_0)\tilde{v}_3(t)\\
& + v_3(t)\int_{T_{0}}^t \tilde{v}_3(\tau)\tilde{f}(\tau)d\tau - \tilde{v}_3(t)\int_{T_{0}}^t {v}_3(\tau)\tilde{f}(\tau)d\tau 
\end{aligned}$$
for $t\in I_3$. 

Now it is possible to conclude the proof of Proposition \ref{propinverselinearJT} exactly as in Subsection $3.8$ in \cite{ABPRS}.

\subsection{The proof of Theorem \ref{main-th-JT}: a fixed point argument}\label{subs-fixed-point-JT}
In this section we find the solution $\h$ to system (\ref{JT-system}) by a fixed point argument. We look for a solution of the form $\h:={\tt v}^l+\q$, where $l>0$ is an integer, to be fixed later, ${\tt v}^l$ is the approximate solution constructed in Lemma \ref{lemmaapproximateslnJacToda} and $\q$ is a small correction.

With this assumption, a decoupling argument shows that system (\ref{JT-system}) is equivalent to system (\ref{syst-tilde-q}), as we proven in Subsection \ref{subs-decoupling}. Equivalently, we have to solve
\begin{equation}
\tilde{\q}_j=L_{\delta,j}^{-1}\mathcal{G}_{\delta,j}(\tilde{\q}),
\end{equation}
with respect to $\tilde{\q}$, where $L_{\delta,j}^{-1}$ is the inverse of the operator
$$L_{\delta,j}:=\Delta_\Sigma+|A_\Sigma|^2(1+\sqrt{2}(\mu_j^0+{\tt r}_j)\w)$$
constructed in Proposition \ref{propinverselinearJT} and $\tilde{\q}$ is defined in subsection \ref{subs-decoupling}.

Let $\sigma_0>0$ be as in Proposition \ref{propinverselinearJT} and let $\sigma:= |\log\delta|$ with $\sigma>\sigma_0$.

\medskip
By Lemma \ref{lemmaapproximateslnJacToda}, we have
\begin{equation}
\|E_{\delta,j}({\tt v}^l)\|_{\mathcal{D}^{2,\beta}_{2,\frac{l-1}{2}}(\Sigma)}\le C_l\sigma^{-\frac{l-1}{2}},
\end{equation}
thus, it is possible to find the solution $\tilde{\q}=(\tilde{\q}_1,\dots,\tilde{q}_{k-1})$ applying the contraction mapping Theorem in the ball
$$B:=\{\tilde{\q}\in(\mathcal{D}^{2,\beta}_{0,\frac{l-2}{2}}(\Sigma))^{k-1}:\, \|\tilde{\q}_j\|_{\mathcal{D}^{2,\beta}_{0,\frac{l-2}{2}}(\Sigma)}<\Lambda \sigma^{\frac{5-2l}{4}},\, 1\le j\le k-1\}$$
provided $\Lambda>0$ is large enough and $l$ is so large that $L^{-1}_j\mathcal{G}_{\delta,j}(\tilde{\q})\in B$. It is possible to prove that this condition is satisfied provided $l>7$. We refer to Subsections $3.8$ and $3.9$ of \cite{AKR1} for further details.

\section{The proof of Theorem \ref{main-th-AC}: construction and energy estimate}

In this section we fix $m,n\ge 2$ with $m+n\ge 8$ we prove Theorem \ref{main-th-AC}, finding a solution $u_{\eps}$ to  \eqref{eq_AC} whose zero level set is the disjoint union of $k$ normal graphs over a dilated version of $\Sigma:=\Sigma^-_{m,n}$.

Property ${\rm iv.}$ in \ref{main-th-AC} is proven in Section \ref{sec-Morse}.

\subsection{Fermi coordinates}

Let $\eps>0$ be a fixed scaling parameter and consider the dilated surface $\Sigma_\eps:=\eps^{-1}\Sigma$. Observe first that for any $\pe\in\Sigma_\eps$, there exists a unique 
$$
(\s,\x,\y)=(\s(\pe),\x(\pe),\y(\pe))\in \R \times S^{m-1}\times S^{n-1}
$$ 
such that
$$
\pe=\eps^{-1}\left(a(\eps\s)\x,b(\eps\s)\y\right).
$$
Denoting the function $\s$ by $s$ in the case $\eps=1$, we get
\begin{equation}\notag
s(\eps\pe)=\eps\s(\pe), \quad \forall\,\pe\in\Sigma_\eps,\,\eps>0.
\end{equation}
In the sequel, $\eps>0$ will be taken to be small.

Then the function
$$
X_\eps({\pe},{\tt z}):=\pe+{\tt z} \,\nu_\Sigma(\eps\pe)
$$
is a diffeomorphism onto the dilated neighbourhood
$$\mathcal{N}_\eps:=\left\{\pe+{\tt z}\,\nu_\Sigma(\eps\pe)\,:\,\pe\in\Sigma_\eps, \quad 
| {\tt z}|<\frac{\delta_0}{\eps}+\eta_0|\s(\pe)|\right\},
$$
provided $\delta_0>0$ is small enough and $0<\eta_0 < \frac{1}{2\sqrt{N-1}}\min (\sqrt{m-1},\sqrt{n-1})$. The new coordinates defined by this diffeomorphism are known as the \textit{Fermi coordinates} of $\Sigma_\eps$.

Fix $\alpha\in\left(0,\frac{1}{9}\right)$ and consider $k$ $O(m)\times O(n)$-invariant functions ${\tt h}_1,\dots, {\tt h}_k : \Sigma \to \R$ with ${\tt h}_l(\pe)=h_l(s(\pe))$ for $\pe \in \Sigma$, $1\le l\le k$ and such that
\begin{equation}
\begin{small}
\label{cond_growth_hl}
\frac{1}{\sqrt{2}}\left(l-\frac{3}{2}-\alpha\right)\left(\log(s^2+2)+2|\log\eps|\right)<h_l(s)
<\frac{1}{\sqrt{2}}\left(l-\frac{3}{2}+\alpha\right)\left(\log(s^2+2)+2|\log\eps|\right)
\end{small}
\end{equation}
for $s\in \R$. Assume also that ${\tt h}_l(\pe)=h_{l}(s)$ is even in the variable $s\in \R$ and $C^2(\Sigma)$. 

For $l=1,\dots,k$, the mapping
$$X_{\eps,\h_l}(\pe,t):=\pe+(t+{\tt h}_l(\eps\pe))\nu_\Sigma(\eps\pe)
$$
defines a diffeomorphism onto the tubular neighbourhood 
\begin{equation}
\label{metric_FC}
\begin{aligned}
\mathcal{N}_{l,\eps}
&=\left\{X_{\eps,\h_l}(\pe,t):|t|<\frac{1}{4\sqrt{2}}\left(\log(s(\eps\pe)^2+2)+2|\log\eps|\right)\right\}.\\
&:=\left\{X_\eps(\pe,{\tt z}):|{\tt z}-{\tt h}_l(\eps\pe)|<\frac{1}{4\sqrt{2}}\left(\log(s(\eps\pe)^2+2)+2|\log\eps|\right)\right\}.
\end{aligned}
\end{equation} 

Introducing the change of variables
\begin{equation}
\label{eps_shift_coord}
z=\eps(t+{\tt h}_l(\eps\pe))
\quad \hbox{for} \quad (\pe,t)\in X_{\eps,{\tt h}_l}^{-1}(\mathcal{N}_{l,\eps}),
\end{equation}
the euclidean Laplacian can be computed in the set $\mathcal{N}_{l,\eps}$ in the coordinates $X_{\eps,{\tt h}_l}(\pe,t)$.

Given a metric $g_{ij}$ on $\Sigma$, with inverse $g^{ij}$, through the Fermi coordinates we have a metric $G_{IJ}$ on $\mathcal{N}:=\mathcal{N}_1$ given by
$$
G_{ij}=g_{ij}-2A_{ij}{\tt z}+{\tt z}^2\partial_i\nu_\Sigma\cdotp\partial_j\nu_\Sigma \quad \hbox{for} \quad i,j =1, \ldots,N,$$
$$ 
G_{i{\tt z}}=G_{{\tt z}i}=0 \quad \hbox{for} \quad i=1,\ldots,N, \quad G_{{\tt z}{\tt z}}=1,
$$
with inverse $G^{IJ}$. We denote the mean curvature and the Laplace-Beltrami operator of the normally translated hypersurface
$$
\Sigma_{\tt z}:=\{\pe+{\tt z}\nu_\Sigma(\pe):\, \pe\in\Sigma\}
$$
by $H_{\Sigma_{\tt z}}$ and $\Delta_{\Sigma_{\tt z}}$ respectively. Then we introduce the notations
$${\tt Q}(\pe,z):=H_{\Sigma_z}-|A_\Sigma|^2 \z$$
and
\begin{equation}
\begin{aligned}
a^{ij}(\pe,\z)&:=G^{ij}-g^{ij}\\
b^j(\pe,{\tt z})&:=\partial_i G^{ij}(\pe,{\tt z})+\partial_i(\log\sqrt{\det G(\pe,{\tt z})})G^{ij}(\pe,{\tt z})-\partial_i g^{ij}(\pe)-\partial_i(\log\sqrt{\det g})g^{ij}(\pe).
\end{aligned}
\end{equation}
so that
$$a^{ij}\partial_{ij}+b^i\partial_i=\Delta_{\Sigma_z}-\Delta_\Sigma.$$
\begin{lemma}
\label{lemma_coord_Fermi}
In the neighbourhood $\mathcal{N}_{l,\eps}$, the Laplacian in the $(\pe,t)$ coordinates is given by
\begin{equation}
\label{Laplacian_Fermi}
\begin{aligned}
\Delta&=\Delta_{\Sigma_\eps}+\partial^2_t-\eps^2\big(\Delta_\Sigma {\tt h}_l+|A_\Sigma|^2{\tt h}_l\big)\partial_t- \eps^2 t|A_{\Sigma}|^2 \partial_{t} - 2 \eps\nabla_\Sigma {\tt h}_l\cdotp\partial_t\nabla_\Sigma+\eps^2|\nabla_\Sigma {\tt h}_l|^2\partial^2_t
\\
&-\eps{\tt Q}\partial_t +(a^{ij}\partial^2_{ij}+\eps b^j\partial_j)-\eps^2(a^{ij} \partial_{ij}{\tt h}_l+b^j \partial_j{\tt h}_l)\partial_t-2\eps a^{ij} \partial_i{\tt h}_l\partial_{tj}+\eps^2a^{ij}\partial_i{\tt h}_l\partial_j{\tt h}_l \partial^2_t,
\end{aligned}
\end{equation}
where $|A_\Sigma|^2$, ${\tt h}_l$ and its derivatives are evaluated at $\eps\pe$, while ${\tt Q},\, a^{ij}$ and $b^j$ are evaluated at $(\eps\pe,\eps(t+{\tt h}_l(\eps\pe)))$.
\end{lemma}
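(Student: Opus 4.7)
The plan is to derive \eqref{Laplacian_Fermi} in two stages: first, write the Euclidean Laplacian in the standard Fermi coordinates of the un-dilated surface $\Sigma$; then push it through the rescaling $q=\eps\pe$ and the normal shift $\z=\eps(t+\h_l(\eps\pe))$ by the chain rule.

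For the first stage, in Fermi coordinates $(q,\z)$ associated with $\Sigma$ via $Y(q,\z)=q+\z\,\nu_\Sigma(q)$, it is classical that $\Delta=\partial_\z^2+\Delta_{\Sigma_\z}-H_{\Sigma_\z}\partial_\z$. The Laplace--Beltrami on the parallel surface decomposes as $\Delta_{\Sigma_\z}=\Delta_\Sigma+a^{ij}\partial_{ij}+b^j\partial_j$ by the very definitions of $a^{ij}$ and $b^j$ in the text, while the minimality $H_\Sigma=0$ combined with the definition ${\tt Q}=H_{\Sigma_\z}-|A_\Sigma|^2\z$ gives $H_{\Sigma_\z}=|A_\Sigma|^2\z+{\tt Q}$. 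Plugging these in produces an exact expression on the un-dilated side, to which I only need to apply the change of variables.

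For the second stage, note that $X_{\eps,\h_l}(\pe,t)=\eps^{-1}Y(\eps\pe,\eps(t+\h_l(\eps\pe)))$, so the $\eps^{-1}$-scaling of the ambient space multiplies the Laplacian by an overall factor $\eps^2$, and the rest is a direct chain rule. For $u(\pe,t)$, setting $V(q,\z):=u(\eps^{-1}q,\z/\eps-\h_l(q))$ one has $\partial_\z V=\eps^{-1}\partial_t u$ and $\partial_i V=\eps^{-1}\partial_i u-(\partial_i\h_l)\partial_t u$, with second derivatives obtained by iteration. After substitution, the pure tangential contribution yields $\eps^2\Delta_\Sigma V=\Delta_{\Sigma_\eps}u-2\eps\,\nabla_\Sigma\h_l\cdot\partial_t\nabla_\Sigma u+\eps^2|\nabla_\Sigma\h_l|^2\partial_t^2 u-\eps^2(\Delta_\Sigma\h_l)\partial_t u$; the perturbative $a^{ij}\partial_{ij}V$ and $b^j\partial_j V$ terms give their counterparts in \eqref{Laplacian_Fermi} in exactly the same way; and the normal contribution expands as $-\eps^2(|A_\Sigma|^2\z+{\tt Q})\partial_\z V=-\eps^2|A_\Sigma|^2 t\,\partial_t u-\eps^2|A_\Sigma|^2\h_l\partial_t u-\eps\,{\tt Q}\partial_t u$.

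In the final step I would collect terms. The $-\eps^2(\Delta_\Sigma\h_l)\partial_t u$ coming from the chain rule on $\Delta_\Sigma V$ and the $-\eps^2|A_\Sigma|^2\h_l\partial_t u$ coming from $|A_\Sigma|^2\z$ with $\z=\eps(t+\h_l)$ combine into the Jacobi expression $-\eps^2(\Delta_\Sigma\h_l+|A_\Sigma|^2\h_l)\partial_t u$; every remaining term matches \eqref{Laplacian_Fermi} directly. The main obstacle is the bookkeeping in the chain rule: at each differentiation one has to track whether a $q$-derivative falls on the $\eps^{-1}q$ argument of $u$ (producing $\eps^{-1}$, absorbed by the overall $\eps^2$), on an explicit $\h_l(q)$ factor (no $\eps$), or on the inner argument $\z/\eps-\h_l(q)$ sitting in the $t$-slot of $u$ (producing mixed tangent-normal derivatives with one $\eps$). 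The Jacobi-operator cancellation is then the single algebraic simplification that reveals the stated structure.
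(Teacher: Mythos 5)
Your computation is correct and is exactly the standard Fermi-coordinate expansion that this paper does not reproduce but delegates to Section 4.1 of \cite{AKR1}: write $\Delta=\partial_z^2+\Delta_{\Sigma_z}-H_{\Sigma_z}\partial_z$ with $\Delta_{\Sigma_z}-\Delta_\Sigma=a^{ij}\partial_{ij}+b^j\partial_j$ and $H_{\Sigma_z}=|A_\Sigma|^2z+{\tt Q}$, then apply the dilation and the shift $z=\eps(t+\h_l(\eps\pe))$ by the chain rule. All the $\eps$-bookkeeping and the recombination into the Jacobi operator term in your sketch match the statement, so the proposal is essentially the same argument as the cited proof.
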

Since we are interested in $O(m)\times O(n)$ invariant functions, we write the Laplacan in the $(\s,t)$ variables, that is
\begin{equation}
\label{Laplacian_Fermi_st}
\begin{aligned}
\Delta= & \Delta_{\Sigma_{\eps}}+\partial^2_t-\eps^2(\Delta_{\Sigma}{\tt h}_l +|A_\Sigma|^2{\tt h}_l)\partial_t - \eps^2t\beta \partial_t-\eps^2(h''_l+\alpha(\eps \s)h'_l)\partial_t-2\eps h'_l\partial_{t\s}\\
&+\eps^2(h'_l)^2\partial^2_t-\eps\Q\partial_t+\ta\partial^2_\s+\eps\bi\partial_\s-\eps^2(\ta h''_l+\bi h'_l)\partial_t-2\eps\ta h'_l\partial_{t\s}+\eps^2\ta(h'_l)^2 \partial^2_t,
\end{aligned}
\end{equation}
where
$$
\Delta_{\Sigma_{\eps}}{\tt h}_l +|A_\Sigma|^2{\tt h}_l=h''_l+\eps\alpha(\eps\s)h'_l+\beta(\eps\s)h_l
$$
$\beta$, $h_l$ and its derivatives are evaluated at $\eps\s$, while $\Q,\, a^{ij}$ and $b^j$ are evaluated at $(\eps\s,\eps(t+h_l(\eps\s)))$ and fulfil
\begin{equation}
\label{est_remainder_laplacian}
\begin{aligned}
|\Q(\eps\s,\eps(t+h_l(\eps\s)))|&\le C\frac{\eps^2(t+h_l(\eps\s))^2}{(1+|\eps \s|)^3}\\
|\ta(\eps\s,\eps(t+h_l(\eps\s)))|&\le C\frac{\eps(t+h_l(\eps\s))}{1+|\eps\s|}
\\
|\bi(\eps\s,\eps(t+h_l(\eps\s)))|&\le C\frac{\eps(t+h_l(\eps\s))}{(1+|\eps\s|)^2}.
\end{aligned}
\end{equation}
For the proof of Lemma \ref{lemma_coord_Fermi} and the estimates (\ref{est_remainder_laplacian}) see Section 4.1 of \cite{AKR1}.

\subsection{Approximate solution to the Allen-Cahn equation \ref{eq_AC}}

In this subsection we describe the approximate solution to the Allen-Cahn equation (\ref{eq_AC}). In order to do so, first we fix $k$ $O(m)\times O(n)$ invariant $C^2$ functions $\h_1,\dots,\h_k:\Sigma\to\R$ such that the corresponding functions $h_j(s(\pe)):=\h_j(\pe)$ fulfil (\ref{cond_growth_hl}) and
\begin{equation}
\label{cond_growth_der_hl}
|h'_j(s)|\le\frac{c}{|s|+1},\quad |h''_j(s)|\le\frac{c}{(|s|+1)^2}, \quad 1\le j\le k,\,s\in\R.
\end{equation}
Then, using the Fermi coordinates $(\pe,\z)$ of $\Sigma_\eps$ we set
\begin{equation}
U_0(\pe,z)=\sum_{j=1}^k w_j(z-\h_j(\eps\pe))+\frac{(-1)^{k-1}-1}{2}, \qquad w_j(t):=(-1)^{j-1}v_\star(t),
\end{equation}
being $v_\star(t):=\tanh(t/\sqrt{2})$. Set also
$$
S(u)=\Delta u+F(u), \qquad F(u)=u(1-u^2)
$$
and let us now compute the error $S(U_0)$ near the normal graphs of the functions $\pe\mapsto{\tt h}_l(\eps\pe)$ over $\Sigma_\eps$.

\begin{lemma}\label{lemmaerrorSUzero}
For $1\le l\le k$ and for any $(\pe,t)\in X^{-1}_{\eps, {\tt h}_l}(\mathcal{N}_{l,\eps})$,
\begin{equation}\label{errorSUzero}
\begin{aligned}
(-1)^{l-1}S(U_0)=&-\eps^2(\Delta_\Sigma \h_l+|A_\Sigma|^2 \h_l)v'_\star-\eps^2|A_\Sigma|^2tv'_\star+\eps^2|\nabla_\Sigma \h_l|^2v''_\star\\
&+6(1-v_\star^2)(e^{-\sqrt{2}t}e^{-\sqrt{2}(\h_l-\h_{l-1})}-e^{\sqrt{2}t}e^{-\sqrt{2}(\h_{l+1}-\h_l)})
+R_\eps(\h_1,\dots,\h_k),
\end{aligned}
\end{equation}
where $R_\eps(\h_1,\dots,\h_k)$ is such that
\begin{equation}\label{estimateReps}
|R_\eps(\h_1,\dots,\h_k)|\le C\eps^{2+\gamma}(s(\eps\pe)^2+2)^{-\frac{2+\gamma}{2}}e^{-\rho|t|} \quad \hbox{in} \quad X_{\eps,\h_l}^{-1}(\mathcal{N}_{l,\eps})
\end{equation}
for some $\gamma\in(0,\frac{1}{2})$ and $\rho\in(0,\sqrt{2})$.
\end{lemma}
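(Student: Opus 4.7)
The plan is to work in the coordinates $(\pe,t)\in X_{\eps,\h_l}^{-1}(\mathcal{N}_{l,\eps})$ and decompose
\[
S(U_0) \;=\; \bigl[\Delta w_l + F(w_l)\bigr] \;+\; \sum_{j\neq l}\Delta w_j(z-\h_j) \;+\; \bigl[F(U_0) - F(w_l)\bigr],
\]
with $F(u)=u-u^3$, so that the three ``geometric'' summands in \eqref{errorSUzero} come out of the first bracket, and the interaction term from combining the third bracket with $\sum_{j\neq l}F(w_j)$.

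Since $w_l(t)=(-1)^{l-1}v_\star(t)$ solves the ODE $w_l''+F(w_l)=0$ and depends only on $t$, plugging $f=w_l$ into \eqref{Laplacian_Fermi_st} annihilates every tangential derivative and only the coefficients of $\partial_t$ and $\partial_t^2$ survive. Using $\beta = |A_\Sigma|^2$ and $|\nabla_\Sigma\h_l|^2 = (h_l')^2$, the terms $-\eps^2(\Delta_\Sigma\h_l + |A_\Sigma|^2\h_l)\partial_t$, $-\eps^2 t\beta \partial_t$ and $\eps^2(h_l')^2\partial_t^2$ produce exactly, after multiplication by $(-1)^{l-1}$, the first three summands in \eqref{errorSUzero}; the remaining coefficients—those involving $\Q,\ta,\bi,h_l',h_l''$—are absorbed into $R_\eps$ by combining \eqref{est_remainder_laplacian}, \eqref{cond_growth_der_hl} and the pointwise decay $|v_\star^{(j)}(t)|\leq Ce^{-\sqrt 2|t|}$. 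For the nonlinear interaction, I would Taylor-expand $F(U_0) = F(w_l) + F'(w_l)\bar w_l + \tfrac12 F''(w_l)\bar w_l^2 + \mathcal{O}(\bar w_l^3)$ with $\bar w_l := U_0-w_l$. A direct parity check shows that the formal limit constant
$\tau_l := \sum_{j<l}(-1)^{j-1} + \sum_{j>l}(-1)^{j} + \tfrac{(-1)^{k-1}-1}{2}$
vanishes identically, so $\bar w_l$ reduces to the honest corrections $\delta_j := w_j(z-\h_j) - w_j^{\mathrm{lim}}$. Using $v_\star(s)=\mathrm{sgn}(s)\bigl(1 - 2e^{-\sqrt 2|s|}\bigr) + \mathcal{O}(e^{-2\sqrt 2|s|})$ together with \eqref{cond_growth_hl}, the leading contributions $j=l\pm 1$ give
\[
\bar w_l = 2(-1)^l\bigl[-e^{-\sqrt 2 t}e^{-\sqrt 2(\h_l-\h_{l-1})} + e^{\sqrt 2 t}e^{-\sqrt 2(\h_{l+1}-\h_l)}\bigr] + \mathcal{O}\bigl(\eps^{2+\gamma}e^{-\rho|t|}\bigr).
\]
Combining $F'(w_l)\bar w_l$ with $-\sum_{j\neq l}F(w_j)$ (note that, by applying \eqref{Laplacian_Fermi_st} at $\h_j$ instead of $\h_l$, the middle bracket $\sum_{j\neq l}\Delta w_j(z-\h_j)$ coincides with $-\sum_{j\neq l}F(w_j)$ up to $\mathcal{O}(\eps^4 e^{\pm\sqrt 2 t})$) and using the algebraic identity $-2(1-3v_\star^2)-4 = -6(1-v_\star^2)$ (and its mirror), the interaction term of \eqref{errorSUzero} emerges after multiplication by $(-1)^{l-1}$.

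The main obstacle is the bookkeeping to fit every residual into the pointwise bound \eqref{estimateReps}. One has to juggle the Taylor quadratic $\tfrac12 F''(w_l)\bar w_l^2 = \mathcal{O}(\eps^4 e^{\pm\sqrt 2 t})$, the contributions of distant layers $|j-l|\geq 2$ which come with the much smaller exponential weight $e^{-\sqrt 2(\h_{l\pm 2}-\h_l)}$, and the geometric residual whose sharpest piece (from $\Q$) scales as $\eps^3(t+h_l)^2/(1+|\eps\s|)^3$. The exponent $\gamma\in(0,\tfrac12)$ and decay rate $\rho\in(0,\sqrt 2)$ arise by trading the polynomial $(t+h_l)^2 \lesssim t^2 + |\log\eps|^2 + \log^2(s^2+2)$ against the exponential weight $e^{-\sqrt 2|t|}$, and by absorbing the logarithmic size of $\h_l$ granted by \eqref{cond_growth_hl} into a small power of $\eps$ via the inequality $|\log\eps|^C \leq C_\gamma \eps^{-\gamma}$.
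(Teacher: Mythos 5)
Your proposal is correct and follows essentially the same route as the paper, which states this lemma without a written proof and relies on the analogous computation in \cite{AKR1}: the Fermi-coordinate expansion \eqref{Laplacian_Fermi_st} applied to $w_l(t)$ yields the three geometric terms, the parity cancellation of the limiting constants reduces $U_0-w_l$ to exponential tails, and the Taylor expansion combined with the far layers' ODEs produces $(F'(v_\star)+2)$ acting on those tails, i.e. the $6(1-v_\star^2)$ interaction term, with all residuals fitting \eqref{estimateReps} by the $\eps^{-\gamma}\gtrsim|\log\eps|^C$ and $e^{-\rho|t|}$ trades you describe. One caution on your bookkeeping sentence: the coefficient $-\eps^2(h_l''+\alpha(\eps\s) h_l')\partial_t$ appearing in \eqref{Laplacian_Fermi_st} is nothing but $-\eps^2\Delta_\Sigma{\tt h}_l\,\partial_t$ written in the $\s$-variable (not an additional residual), and being of size $\eps^2(1+|s|)^{-2}$ it cannot be absorbed into $R_\eps$, which only tolerates $O(\eps^{2+\gamma})$; only the $\Q$-, $\ta$- and $\bi$-weighted occurrences of $h_l'$, $h_l''$ go into the remainder.
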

Now we improve the approximation, that is we look for a better approximate solution in such a way that the term of order $\eps^2$ in the error is cancelled. In order to do so we write
\begin{equation}\label{AClinearOrthCond1}
6(1-v_\star^2)e^{-\sqrt{2}t}=a_{\star} v'_\star+g_{0}(t) \quad  \hbox{with} \quad \int_\R g_{0}(t)v'_\star(t)dt=0
\end{equation}
and, also using that
\begin{equation}\label{AClinearOrthCond2}
\int_{\R} v''_{\star}(t)v'_{\star}(t)dt =\int_\R t(v'_\star(t))^2 dt=0.
\end{equation}
we solve the ODEs
\begin{equation}
\psi''_0+(1-3v_\star^2)\psi_0=g_0, \quad  \psi''_1+(1-3v_\star^2)\psi_1=-v''_\star, \quad 
\psi''_2+(1-3v_\star^2)\psi_2=tv'_\star \quad \hbox{in} \quad \R.
\end{equation}
under the orthogonality condition
$$\int_\R \psi_i v'_\star =0\qquad 0\le i\le 2.$$
Then we define $\eta_j: \Sigma_{\eps}\times \R \to \R$ by the formula 
\begin{equation}
\begin{aligned}
(-1)^{j-1}\eta_j(\pe,\z):=&-e^{-\sqrt{2}(\h_j(\eps\pe)-\h_{j-1}(\eps\pe))}\psi_0(\h_j(\eps\pe)-\z)
+e^{-\sqrt{2}(\h_{j+1}(\eps\pe)-\h_j(\eps\pe))}\psi_0(\z-\h_j(\eps\pe))\\
&\hspace{3cm}+\eps^2|\nabla_\Sigma \h_j(\eps\pe)|^2\psi_1(\z-\h_j(\eps\pe))+\eps^2|A_\Sigma(\eps\pe)|^2\psi_2(\z-\h_j(\eps\pe))
\end{aligned}
\end{equation}
and we consider the approximation
\begin{equation}\notag
U_1(\pe,\z):=U_0(\pe,\z)+\eta(\pe,\z), \qquad \eta(\pe,\z):=\sum_{j=1}^k\eta_{j}(\pe,\z). 
\end{equation}
\begin{lemma}
\label{lemma_new_error}
Assume the hypothesis in Lemma \eqref{errorSUzero}. The error $S(U_1)$ in $X^{-1}_{\eps,\h_l}(\mathcal{N}_{\eps,l})$ is given by
\begin{equation}\notag
\begin{aligned}
(-1)^{l-1}S(U_1)=-\eps^2(\Delta_\Sigma \h_l+|A_\Sigma|^2 \h_l)v'_\star+a_\star(e^{-\sqrt{2}(\h_l-\h_{l-1})}-e^{-\sqrt{2}(\h_{l+1}-\h_l)})v'_\star+R_{6,\eps}(\h_1,\dots,\h_k),
\end{aligned}
\end{equation}
with 
\begin{equation}
|R_{6,\eps}(\h_1,\dots,\h_k)|\le C \eps^{2+\gamma}(s(\eps\pe)^2+2)^{-\frac{2+\gamma}{2}}e^{-\rho|t|}.
\end{equation}
\end{lemma}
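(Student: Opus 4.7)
The overall plan is to exploit the decomposition
\[
S(U_1)=S(U_0)+L_\eps(\eta)+N(\eta),\qquad L_\eps:=\Delta+F'(U_0),\quad N(\eta):=F(U_0+\eta)-F(U_0)-F'(U_0)\eta,
\]
and to show that the correction $\eta=\sum_j \eta_j$ has been engineered so that $L_\eps(\eta)$ cancels, inside $\mathcal{N}_{l,\eps}$, precisely the $\eps^2$-order and non-resonant terms of $S(U_0)$ exhibited in Lemma \ref{lemmaerrorSUzero}, modulo remainders of the required size. First I would fix $l$ and write, in $\mathcal{N}_{l,\eps}$, $U_0=(-1)^{l-1}v_\star(t)+r_l(\pe,t)$ with $t=\z-\h_l(\eps\pe)$ and $r_l$ collecting the other profiles $w_j$, $j\ne l$; each of these is pointwise of order $e^{-\sqrt{2}|\h_j-\h_l|}e^{-\sqrt{2}|t|}$. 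Consequently $F'(U_0)=1-3v_\star^2+O(r_l)$ and, by Lemma \ref{lemma_coord_Fermi}, the leading action of $L_\eps$ on a function of $(\s,t)$ is the model operator $\partial_t^2+(1-3v_\star^2)$, while the remaining pieces (governed by $\ta,\bi,\Q$ and by $\eps^2\Delta_\Sigma\h_l$, $\eps^2|\nabla_\Sigma\h_l|^2$, $\eps h'_l\partial_{t\s}$) can be controlled via the estimates \eqref{est_remainder_laplacian} together with \eqref{cond_growth_der_hl}.

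Applying $\partial_t^2+(1-3v_\star^2)$ termwise to $\eta_l$, the defining ODEs for $\psi_0,\psi_1,\psi_2$, combined with the split \eqref{AClinearOrthCond1} and with the symmetry identities \eqref{AClinearOrthCond2}, produce exactly the negatives of the contributions
\[
-\eps^2|A_\Sigma|^2 tv'_\star+\eps^2|\nabla_\Sigma\h_l|^2 v''_\star+6(1-v_\star^2)\big(e^{-\sqrt2 t}e^{-\sqrt2(\h_l-\h_{l-1})}-e^{\sqrt2 t}e^{-\sqrt2(\h_{l+1}-\h_l)}\big)
\]
appearing in Lemma \ref{lemmaerrorSUzero}, except for the resonant piece $a_\star v'_\star\big(e^{-\sqrt2(\h_l-\h_{l-1})}-e^{-\sqrt2(\h_{l+1}-\h_l)}\big)$ produced by the $a_\star v'_\star$ summand of \eqref{AClinearOrthCond1}, which has no antiderivative against $v'_\star$ in $L^2(\R)$. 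The term $-\eps^2(\Delta_\Sigma\h_l+|A_\Sigma|^2\h_l)v'_\star$ is not touched by the correction. This produces the announced principal part of $S(U_1)$.

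Everything else has to be packaged into $R_{6,\eps}$, and verified to satisfy the bound $\eps^{2+\gamma}(s(\eps\pe)^2+2)^{-(2+\gamma)/2}e^{-\rho|t|}$ with a common pair $(\gamma,\rho)$. The pieces to track are: (i) the Fermi-coordinate correction terms of the Laplacian applied to $\eta_l$, bounded by \eqref{est_remainder_laplacian}--\eqref{cond_growth_der_hl}; (ii) the cross-interaction pieces $L_\eps\eta_j$ for $j\ne l$, which inherit the factor $e^{-\sqrt2|\h_j-\h_l|}$ and therefore, thanks to the growth \eqref{cond_growth_hl}, gain an extra power $\eps^{2\alpha}$ over the ambient $\eps^2$ scale; (iii) the perturbation $O(r_l)\eta$ caused by replacing $F'(U_0)$ by $1-3v_\star^2$; and (iv) the nonlinear remainder $N(\eta)$, which is pointwise $O(\eta^2)$ and hence strictly smaller. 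Already present in $R_\eps(\h_1,\dots,\h_k)$ from Lemma \ref{lemmaerrorSUzero} is a term of exactly the prescribed form, so items (i)--(iv) just have to be merged into it.

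The main obstacle is the bookkeeping in (i)--(iv): one must verify that the exponent $\gamma\in(0,1/2)$ and the decay rate $\rho\in(0,\sqrt2)$ can be chosen uniformly to absorb every leftover contribution simultaneously. The logarithmic growth in \eqref{cond_growth_hl} is what makes this possible, because it converts each factor $e^{-\sqrt2(\h_{l+1}-\h_l)}$ into a power of $\eps$ and of $(s^2+2)^{-1}$, turning the naive $\eps^2$ error into an $\eps^{2+\gamma}$ error for some $\gamma<2\alpha$; matching this against the inequality $\gamma<1/2$ forces the initial choice $\alpha<1/9$ to be compatible with all the higher-order terms, and is the genuinely delicate part of the computation.
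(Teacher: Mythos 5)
Your decomposition $S(U_1)=S(U_0)+L_\eps(\eta)+N(\eta)$, with the ODEs for $\psi_0,\psi_1,\psi_2$ and the orthogonality splittings \eqref{AClinearOrthCond1}--\eqref{AClinearOrthCond2} cancelling the non-resonant $\eps^2$-terms of Lemma \ref{lemmaerrorSUzero} and leaving only the Jacobi term, the resonant projection $a_\star v'_\star(e^{-\sqrt{2}(\h_l-\h_{l-1})}-e^{-\sqrt{2}(\h_{l+1}-\h_l)})$ and a remainder absorbed into $R_{6,\eps}$, is exactly the argument the paper relies on (it is carried out in detail in \cite{AKR1}, to which the paper defers). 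The proposal is correct and follows essentially the same route, with only the final bookkeeping of the admissible pair $(\gamma,\rho)$ left schematic.
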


\subsection{A gluing procedure}\label{section-gluing}

In this subsection we define a global approximation and we reduce the Allen-Cahn equation (\ref{eq_AC}) to a system of $k+1$ equation, through a gluing argument. Throughout the section, we will use the notations of Subsection $5.1$ of \cite{AKR1}. 

For the reader's convenience, we recall that we took a cutoff function $\chi\in C^\infty(\R)$ such that $0\le\chi\le 1$ and
\begin{equation}\notag
\chi(t)=\begin{cases}
1 \qquad t\le 1\\
0 \qquad t\ge 2.
\end{cases}
\end{equation}
and, for $(\pe,{\tt z})\in\Sigma_\eps\times\R$, we set
$$\zeta(\pe,{\tt z}):=\chi\left(|{\tt z}|-\frac{4}{\sqrt{2}}(\log(s(\eps\pe)^2+2)+2|\log\eps|)+2\right).$$
Composing with the inverse of $X_\eps$, we have the cutoff function
\begin{equation}
\tilde{\zeta}(\xi):=
\left\{\begin{aligned}
\zeta\circ X_\eps^{-1}(\xi) , \qquad &\hbox{for} \quad \xi\in \mathcal{N}_\eps\\
0 \,\,\,\,\,\,, \qquad &\hbox{otherwise.}
\end{aligned}
\right.
\end{equation}
defined in natural Euclidean coordinates.
Using this function, we set
\begin{equation}\label{globalapproxsect5}
\tilde{U}_1(\xi):=
\left\{\begin{aligned}
U_1\circ X_\eps^{-1}(\xi) , \qquad &\hbox{for} \quad \xi\in \mathcal{N}_\eps\\
0 \,\,\,\,\,\,, \qquad &\hbox{otherwise.}
\end{aligned}
\right.
\end{equation}
Moreover, we introduce a function 
$$\mathbb{H}(\xi):=
\begin{cases}
-1 \qquad\text{in $E^-$}\\
(-1)^{k-1} \qquad\text{in $E^+$}
\end{cases}
$$
and we choose
$$w:=\tilde{\zeta}\tilde{U}_1+\mathbb{H}(1-\tilde{\zeta})$$
as a global approximation. The idea is that $w$ is constant far from the surface, and oscillates $k$ times in a neighbourhood of $\Sigma$ which is logarithmically opening as $\eps\to 0$ and $|\xi|\to\infty$.  

For $1\le l\le k$ and a function $u:\Sigma_\eps\times\R\to\R$, we define its \textit{pull back} by
\begin{equation}
u^{\natural}_l(\xi):=
\begin{cases}
u\circ X_{\eps,{\tt h}_l}^{-1}(\xi),\qquad &\text{for $\xi\in\mathcal{N}_{\eps,l}\subset\R^{N+1}$}\\
\hspace{1cm}0, \qquad &\text{otherwise}.
\end{cases}
\end{equation}

On the other hand, given any $O(m)\times O(n)$-invariant function $v:\R^{N+1}\to\R$, we define for $1\le l\le k$ and $(\pe,t)\in\Sigma_\eps\times\R$, the \textit{push forward}
\begin{equation}
v_{l}^{\sharp}(\pe,t):=\begin{cases}
v\circ X_{\eps,{\tt h}_l}(\pe,t)\qquad&\text{if $(\pe,t)\in X^{-1}_{\eps,{\tt h}_l}(\mathcal{N}_{\eps,l})$}\\
\hspace{1cm}0,\qquad &\text{otherwise.}
\end{cases}
\end{equation}

\medskip
For any integer $i\ge 1$ and $(\pe,t)\in\Sigma_\eps\times\R$, we set
\begin{equation}\label{def:chi_j}
\chi_i(\pe,t):=\chi\left(|t|-\frac{1}{4\sqrt{2}}(\log(s(\eps\pe)^2+2)+2|\log\eps|)+i\right)
\end{equation}

We look for a solution to (\ref{eq_AC}) of the form
$$u=w+\varphi,\qquad\varphi=\sum_{l=1}^k \chi^{\natural}_{3,j}\varphi_j+\psi,$$
so that, arguing exactly as in \cite{AKR1}, it is possible to see that the Allen-Cahn equation \ref{eq_AC} is equivalent to the system given by
\begin{equation}
\label{eq_nl_psi}
\begin{aligned}
&\Delta\psi+\left(2-(1-\sum_{j=1}^k \chi^{\natural}_{4,j})(F'(w)+2)\right)\psi+\left(1-\sum_{j=1}^k \chi^{\natural}_{4,j}\right)S(w)\\
&+\sum_{j=1}^k 2\nabla\chi^{\natural}_{3,j}\cdotp\nabla\varphi_j+\Delta\chi^{\natural}_{3,j}\varphi_j
+(1-\chi^{\natural}_{4,j})Q_w\left(\psi+\sum_{i=1}^k \chi^{\natural}_{3,i}\varphi_i\right)=0 \quad \hbox{in} \quad \R^{N+1}
\end{aligned}
\end{equation}
and
\begin{equation}
\label{eq_nl_phi_j}
\begin{aligned}
&\Delta_{\Sigma_\eps}\phi_j+\partial^2_t\phi_j+F'(v_\star)\phi_j+\chi_4 S(w^{{\sharp}}_j)+\chi_4
Q_{w^{\sharp}_{j}}(\phi_j+\psi^{\sharp}_{j})+\chi_4(F'(w^{\sharp}_{j})+2)\psi^{\sharp}_{j}\\
&\qquad+\chi_2(\Delta_{\mathcal{N}_{j,\eps}}-\partial^2_t-\Delta_{\Sigma_\eps})\phi_j+\left(F'(w^{\sharp}_{j})-F'(v_\star)\right)\chi_2\phi_j=0 \quad \hbox{in} \quad {\Sigma_{\eps} \times \R},\quad  1\le j\le k,
\end{aligned}
\end{equation}
whose unknowns are the functions $\phi_j:\Sigma_\eps\times\R\to\R$, $1\le j\le k$, which play the role of corrections close to the nodal set, the functions $\h_j:\Sigma\to\R$, used in the definition of the approximate solution $w$, and $\psi:\R^{N+1}\to\R$, which is the correction far from the nodal set. 

First we find a solution $\psi=\psi(\phi_1,\dots,\phi_k,\h_1,\dots,\h_k)$ to (\ref{eq_nl_psi}), for any given $\phi_1,\dots,\phi_k$ and $\h_1,\dots,\h_k$, then we plug it into system \ref{eq_nl_phi_j} and we solve it through an infinite dimensional Lyapunov-Schmidt reduction.

\subsection{The correction far from $\Sigma$}\label{sec-eq-far}

Arguing exactly as in Lemma $5.1$ of \cite{AKR1}, we can see that
\begin{equation}
\label{est-error-far}
\left|\left(1-\sum_{l=1}^2 \chi^{\natural}_{4,l}\right)S(w)\right|\le C\eps^{2+\bar{\gamma}}(|\eps\xi|^2+2)^{-\frac{2+\bar{\gamma}}{2}},\qquad\forall\,\xi\in\R^{N+1}.
\end{equation}
therefore we can use the same function spaces introduced in \cite{ABPRS}. For the reader's convenience, we recall their definition here.

For $\beta\in(0,1)$, $\mu>0$ and functions $g\in C^{0,\beta}_{loc}(\R^{N+1})$, we introduce the norm
\begin{equation}\notag
\|g\|_{{\infty, \mu}}:=\|(|\eps\xi|^2+2)^{\frac{\mu}{2}}g\|_{L^\infty(\R^{{N+1}})}.
\end{equation}
Moreover, we say that $g\in{\tt Y}^{{\natural,\beta}}_{{\mu}}$ if it is $O(m)\times O(n)$-invariant and the norm
\begin{equation}
\|g\|_{{\tt Y}^{{\natural,\beta}}_{{\mu}}}:=\sup_{\xi\in\R^{N+1}}(2+|\eps\xi|^2)^{\frac{2+\mu}{2}}\|g\|_{C^{0,\beta}(B_1(\xi))}
\end{equation}
is finite.

We also say that a function $\psi\in C^{2,\beta}_{loc}(\R^{N+1})$ is in ${\tt X}^{{\natural,\beta}}_{{\mu}}$ if it is $O(m)\times O(n)$-invariant and the norm
\begin{equation}
\label{norm_C2pmu}
\|\psi\|_{{\tt X}^{{\natural,\beta}}_{{\mu}}}:=\sup_{\xi\in\R^{N+1}}(2+|\eps\xi|^2)^{\frac{2+\mu}{2}}\|D^2\psi\|_{C^{0,\beta}(B_1(\xi))}
+\|\nabla\psi\|_{{\infty, 2+\mu}}+\|\psi\|_{{\infty, 2+\mu}}
\end{equation}
is finite.

\medskip
Given $\beta\in (0,1)$, $\rho\in(0,\sqrt{2})$, $\mu>0$ and a function $f\in C^{0,\beta}_{loc}(\Sigma_\eps\times\R)$, we define the norm
\begin{equation}
\|f\|_{{\infty, \mu,\rho}}:=\|(s(\eps\pe)^2+2)^{\frac{2+\mu}{2}}\cosh(t)^\rho f\|_{L^\infty(\Sigma_\eps\times\R)},
\end{equation} 
Furthermore, we say that $f\in Y^{{\sharp,\beta}}_{{\mu, \rho}}$ if it is $O(m)\times O(n)$-invariant and
\begin{equation}
\|f\|_{Y^{{\sharp,\beta}}_{{\mu, \rho}}}:=\sup_{\pe\in \Sigma_\eps,\,t\in\R}(s(\eps\pe)^2+2)^{\frac{2+\mu}{2}}\cosh(t)^\rho \|f\|_{C^{0,\beta}(I_{\pe,t})},\qquad I_{\pe,t}:=B_1(\pe)\times(t,t+1) 
\end{equation}
is finite. Moreover, for $O(m)\times O(n)$-invariant functions $\phi\in C^{2,\beta}_{loc}(\Sigma_\eps\times\R)$, we say that $\phi\in X^{{\sharp,\beta}}_{{\mu, \rho}}$ if
\begin{equation}
\|\phi\|_{X^{{\sharp,\beta}}_{{\mu, \rho}}}:=\sup_{\pe\in\Sigma_\eps,\,t\in\R}(s(\eps\pe)^2+2)^{\frac{2+\mu}{2}}\cosh(t)^\rho \|D^2\phi\|_{C^{0,\beta}(I_{\pe,t})}+\|\nabla\phi\|_{{\infty, 2+\mu,\rho}}
+\|\phi\|_{{\infty, 2+\mu,\rho}}
\end{equation}
is finite.

We make the assumption that $\h:=(\h_1,\dots,\h_k):\Sigma\to\R$ is of the form
\begin{equation}
\label{choice-h}
\h:={\tt v}+\q,\qquad \q:=B^{-1}\hat{\q}
\end{equation}
where ${\tt v}:=({\tt v}_1,\dots,{\tt v}_k)$ is the solution to the Jacobi-Toda system found in Theorem \ref{main-th-JT}, the $k\times k$ matrix $B$ is defined in \ref{def-B} and we take $\hat{\q}:=(\hat{\q}_1,\dots,\hat{\q}_{k})$ such that $\hat{\q}_j\in\mathcal{D}^{2,\beta}_{\mu,\frac{1}{2}}(\Sigma)$ for $1\le j\le k-1$ and $\hat{\q}_k\in\mathcal{C}^{2,\beta}_{\infty,\mu}(\Sigma)$.

\begin{proposition}
Let $\beta\in(0,\frac{1}{2})$, $\rho>0$ and let $\Lambda_0,\,\Lambda_1>0$ be fixed constants. Let also $\h$ be of the form (\ref{choice-h}), with $\hat{\q}_k\in\mathcal{C}^{2,\beta}_{\infty,\mu}(\Sigma),\,\hat{\q}_j\in\mathcal{D}^{2,\beta}_{\mu,\frac{1}{2}}(\Sigma),\,1\le j\le k-1$ such that
$$\|\hat{\q}_k\|_{\mathcal{C}^{2,\beta}_{\infty,\mu}(\Sigma)}<\Lambda_0\eps^{\mu},\qquad
\|\hat{\q}_j\|_{\mathcal{D}^{2,\beta}_{\mu,\frac{1}{2}}(\Sigma)}<\Lambda_0\eps^{\mu},\, 1\le j\le k-1.$$
Let $\phi:=(\phi_1,\dots,\phi_k)\in X^{{\sharp,\beta}}_{{\mu, \rho}}$ be such that 
$$\|\phi_j\|_{X^{{\sharp,\beta}}_{{\mu, \rho}}}<\Lambda_1\eps^{2+\mu},\qquad 1\le j\le k.$$ 
Then there exists a unique solution $\psi:=\psi(\phi,\h)\in {\tt X}^{{\sharp,\beta}}_{{\mu, \rho}}$ to equation (\ref{eq_nl_psi}) satisfying
\begin{equation}\notag
\begin{aligned}
\|\psi(\phi,\h)\|_{{\tt X}^{{\sharp,\beta}}_{{\mu, \rho}}}  &\le\Lambda_2\eps^{2+\mu},\\
\|\psi(\phi^1,\h)-\psi(\phi^2,\h)\|_{{\tt X}^{{\sharp,\beta}}_{{\mu, \rho}}}&\le c\eps^{2+\mu}\sum_{j=1}^k\|{\phi^1_j-\phi^2_j}\|_{X^{\sharp,\beta}_{\mu,\frac{1}{2}}(\Sigma)}\\
\|\psi(\phi,\h^1)-\psi(\phi,\h^2)\|_{{\tt X}^{{\sharp,\beta}}_{{\mu, \rho}}}&
\le c\eps^{2+\mu}\left(\|\hat{\q}^1_k-\hat{\q}^2_k\|_{\mathcal{C}^{2,\beta}_{\infty,\mu}(\Sigma)}
+\sum_{j=1}^{k-1}\|\q^1_j-\q^2_j\|_{\mathcal{D}^{2,\beta}_{\mu,\frac{1}{2}}(\Sigma)}\right),
\end{aligned}
\end{equation}
for some $c,\,\Lambda_2>0$. 
\label{prop_psi}
\end{proposition}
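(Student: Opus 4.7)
The plan is to recast \eqref{eq_nl_psi} as a fixed-point equation $\psi=\mathcal{T}(\psi)$ in the Banach space ${\tt X}^{\natural,\beta}_{\mu}$ and solve it by the contraction mapping theorem in the ball $B_{\Lambda_2\eps^{2+\mu}}$ for a sufficiently large $\Lambda_2>0$. Here $\mathcal{T}$ is obtained by composing the inverse of the linear principal part
$$
\mathcal{L}_0\psi := \Delta\psi + \Big[2-\Big(1-\sum_{j=1}^k\chi^{\natural}_{4,j}\Big)(F'(w)+2)\Big]\psi
$$
with minus the remaining terms of \eqref{eq_nl_psi}, viewed as a nonlinear source $\mathcal{S}(\psi,\phi,\h)$. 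Apart from the extension from $k=2$ to arbitrary $k$, and the fact that $\Sigma$ may correspond to $m=2$ or $n=2$, the scheme is the one developed in Section~5 of \cite{AKR1}, so my task reduces to checking that each ingredient carries over.

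The first key point is the uniform invertibility of $\mathcal{L}_0\colon {\tt X}^{\natural,\beta}_{\mu}\to {\tt Y}^{\natural,\beta}_{\mu}$. The coefficient of $\psi$ is uniformly comparable to a constant of definite sign on all of $\R^{N+1}$: far from $\Sigma_\eps$, where $\sum_j\chi^{\natural}_{4,j}=0$, it equals $-F'(w)=3w^2-1$ with $w\to\pm1$ exponentially fast; inside the layer region, where $\sum_j\chi^{\natural}_{4,j}=1$, it is equal to the constant $2$. This uniform ellipticity with a spectral gap, combined with $L^\infty$-barriers of the form $\xi\mapsto (2+|\eps\xi|^2)^{-\mu/2}$ (supersolutions for $\mathcal{L}_0$ when $\mu>0$ is small enough) and interior Schauder estimates, yields the required Hölder-norm bound independent of $\eps$. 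The $O(m)\times O(n)$-invariance is preserved because the coefficient is itself invariant.

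The source $\mathcal{S}$ is estimated in ${\tt Y}^{\natural,\beta}_{\mu}$ piece by piece. The leading term $(1-\sum_j\chi^{\natural}_{4,j})S(w)$ is controlled by \eqref{est-error-far} and contributes $C\eps^{2+\mu}$ provided $\mu<\bar\gamma$. The cutoff-derivative pieces $2\nabla\chi^{\natural}_{3,j}\!\cdot\!\nabla\varphi_j+\Delta\chi^{\natural}_{3,j}\,\varphi_j$ are supported where $|t|\simeq\tfrac{1}{4\sqrt 2}(\log(s^2+2)+2|\log\eps|)$; on that set $\varphi_j=\chi^{\natural}_{3,j}\phi_j$ is exponentially small in $\eps$, producing a harmless $O(\eps^N)$ contribution for every $N$. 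The quadratic nonlinearity $Q_w(\cdot)$ is bounded by the square of the norm of its argument, hence by $C\eps^{2(2+\mu)}$ inside $B_{\Lambda_2\eps^{2+\mu}}$; the hypotheses on $\h$ and $\phi$ ensure the corresponding control of the cross-terms coming from $\sum_i\chi^{\natural}_{3,i}\varphi_i$. Together these bounds give $\|\mathcal{T}(\psi)\|_{{\tt X}^{\natural,\beta}_{\mu}}\le \Lambda_2\eps^{2+\mu}$ and a Lipschitz constant of order $\eps^{2+\mu}$, closing the fixed-point argument. The Lipschitz dependencies on $\phi$ and $\h$ are derived by subtracting two fixed-point identities and exploiting the smooth dependence of $w$, the cutoffs, and $Q_w$ on these parameters, together with the weighted norms controlling $\hat\q_k$ and the $\hat\q_j$'s. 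The main obstacle is Step~1, namely securing the uniform invertibility of $\mathcal{L}_0$ between the weighted Hölder spaces; once this is established, the source estimates and the contraction property are routine adaptations of \cite[Section~5]{AKR1}.
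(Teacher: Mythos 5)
Your overall scheme---recasting \eqref{eq_nl_psi} as a fixed point problem in ${\tt X}^{\natural,\beta}_{\mu}$, inverting the outer linear operator uniformly in $\eps$, estimating the source via \eqref{est-error-far}, the cutoff-derivative terms and the quadratic remainder, and contracting---is exactly the route the paper takes, since its proof simply refers to Proposition 5.1 of \cite{AKR1}. However, your Step 1, on which everything hinges, is inconsistent as written. You correctly compute that the zeroth-order coefficient displayed in \eqref{eq_nl_psi}, namely $2-(1-\sum_j\chi^{\natural}_{4,j})(F'(w)+2)$, equals $-F'(w)=3w^2-1\approx 2$ away from the layers and $2$ inside them, i.e.\ it is \emph{positive}; but for $\mathcal{L}_0=\Delta+c(x)$ with $c\approx +2$ there is no maximum principle, $0$ lies in the essential spectrum, and the proposed weights are not supersolutions (one gets $\mathcal{L}_0 B\approx +2B>0$), so the claimed ``uniform invertibility with a spectral gap via barriers'' fails for that operator. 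The resolution is that the potential must enter with the opposite sign: carrying out the gluing computation with $u=w+\sum_j\chi^{\natural}_{3,j}\varphi_j+\psi$ and splitting $F'(w)\psi$ by means of the cutoffs $\chi^{\natural}_{4,j}$ produces the outer operator $\Delta\psi+\bigl[(1-\sum_j\chi^{\natural}_{4,j})F'(w)-2\sum_j\chi^{\natural}_{4,j}\bigr]\psi$, whose potential is $\approx -2$ everywhere; equivalently, the term $2-(1-\sum_j\chi^{\natural}_{4,j})(F'(w)+2)$ should appear with a minus sign in \eqref{eq_nl_psi} (a sign slip in the displayed equation that you inherited and then argued around). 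Only with this sign---$\Delta$ minus a potential bounded below by a positive constant---do the barrier functions $(2+|\eps\xi|^2)^{-\frac{2+\mu}{2}}$ (note: the exponent must match the weight of the space, not $-\mu/2$) together with interior Schauder estimates give the $\eps$-uniform inverse, after which your contraction and Lipschitz estimates proceed as in \cite{AKR1}.

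Two secondary points. First, the cutoff-derivative terms $2\nabla\chi^{\natural}_{3,j}\cdot\nabla\varphi_j+\Delta\chi^{\natural}_{3,j}\varphi_j$ are not $O(\eps^{N})$ for every $N$: since $\rho\in(0,\sqrt2)$ is fixed and the support of $\nabla\chi^{\natural}_{3,j}$ lies where $|t|\approx\frac{1}{4\sqrt 2}\bigl(\log(s^2+2)+2|\log\eps|\bigr)$, the exponential factor $e^{-\rho|t|}$ only yields a fixed extra gain of order $\eps^{\rho/(2\sqrt 2)}$ (times additional polynomial decay in $s$); this is sufficient for the $\Lambda_2\eps^{2+\mu}$ bound but should not be overstated. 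Second, when estimating $Q_w$ and the cross terms you must track the weights, not just the norms, to land in ${\tt Y}^{\natural,\beta}_{\mu}$; this works because products of functions with the stated decay decay faster, but it deserves a line in a complete proof.
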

For a proof, see Proposition $5.1$ of \cite{AKR1}.

\subsection{The Lyapunov-Schmidt reduction}\label{sec-Lyapunov-Schmidt}
In this section we solve system (\ref{eq_nl_phi_j}). We fix $\h$ as in (\ref{choice-h}) and we set
\begin{equation}
\begin{aligned}
Q_w(\varphi)&:=F(w+\varphi)-F(w)-F'(w)\varphi\\
{\tt N}_{l}^{\sharp}(\psi,\phi_1,\dots,\phi_k,\h_1,\dots,\h_k)&:=
\chi_4Q_{w_{l}^{{\sharp}}}(\phi_l+\psi_{l}^{{\sharp}})+\chi_4(F'(w_{l}^{{\sharp}})+2)\psi_{l}^{{\sharp}}+\chi_2(\Delta_{\mathcal{N}_{l,\eps}}-\partial^2_t-\Delta_{\Sigma_\eps})\phi_l\\
&\qquad+(F'(w_{l}^{{\sharp}})-F'(v_\star))\chi_2\phi_l,\\
P_{l}^{\sharp}(\psi,\phi_1,\dots,\phi_2,\h_1,\dots,\h_2)(\pe)&:=\int_\R \left(\chi_4 S(w_{l}^{{\sharp}})+{\tt N}_{l}^{\sharp}(\psi,\phi_1,\dots,\phi_k,\h_1,\dots,\h_k)\right)v'_\star(t) dt \quad\hbox{for} \quad\pe\in\Sigma_\eps.
\end{aligned}
\end{equation}
first we find a solution $\phi:=(\phi_1,\dots,\phi_k):\Sigma_\eps\times\R\to\R$ to the system
\begin{equation}
\label{eq_nl_phi_j_pr}
\begin{aligned}
\Delta_{\Sigma_\eps}\phi_l+\partial^2_t\phi_l+F'(v_\star)\phi_l=-\chi_4 S(w_{l}^{{\sharp}})-&{\tt N}_{l}^{\sharp}(\psi,\phi_1,\dots,\phi_k,\h_1,\dots,\h_k)+P_{l}^{\sharp}(\psi,\phi_1,\dots,\phi_k,\h_1,\dots,\h_k)(\pe)v'_\star, \\
\int_\R \phi_l(\pe,t)v'_\star(t)dt=&0,\qquad\forall\, \pe\in\Sigma_\eps, \quad  1\le l \le k
\end{aligned}
\end{equation}
where $\psi=\psi(\phi,\h)$ is the solution found above, then we look for $\h:=(\h_1,\dots,\h_k)$ such that
\begin{equation}
\label{bifo_eq}
P_{l}^{\sharp}(\psi,\phi_1,\dots,\phi_k,\h_1,\dots,\h_k)=0, \qquad 1\le l\le k,
\end{equation}
System (\ref{eq_nl_phi_j_pr}) is known as the \textit{auxiliary equation}, while system (\ref{bifo_eq}) is known as the \textit{bifurcation equation}.

\begin{proposition}
Let $\beta\in(0,\frac{1}{2})$, $\rho>0$ be given and let $\h_1,\dots,\h_k$ be 
of the form (\ref{choice-h}), with $\hat{\q}_k\in\mathcal{C}^{2,\beta}_{\infty,\mu}(\Sigma),\,\hat{\q}_j\in\mathcal{D}^{2,\beta}_{\mu,\frac{1}{2}}(\Sigma),\, 1\le j\le k-1$ such that
$$\|\hat{\q}_k\|_{\mathcal{C}^{2,\beta}_{\infty,\mu}(\Sigma)}<\Lambda_0\eps^{\mu},\qquad
\|\hat{\q}_j\|_{\mathcal{D}^{2,\beta}_{\mu,\frac{1}{2}}(\Sigma)}<\Lambda_0\eps^{\mu},\,1\le j\le k-1$$
with $\Lambda_0>0$ fixed. Then there exists a unique solution $\phi=\phi(\h)\in (X_{\bot,\mu,\rho}^{\sharp,\beta})^k$ satisfying
\begin{equation}
\begin{aligned}
\|\phi_j(\h)\|_{X_{\mu,\rho}^{\sharp,\beta}}&\le \Lambda_1 \eps^{2+\mu},\,\qquad\forall\, 1\le j\le k,\\
\|\phi_j(\h^1)-\phi_j(\h^2)\|_{X_{\mu,\rho}^{\sharp,\beta}}&\le c\eps^{2+\mu}(\|\hat{\q}^1_k-\hat{\q}^2_k\|_{\mathcal{C}^{2,\beta}_{\infty,\mu}(\Sigma)}
+\sum_{i=1}^{k-1}\|\hat{\q}^1_i-\hat{\q}^2_i\|_{\mathcal{D}^{2,\beta}_{\mu,\frac{1}{2}}(\Sigma)}), \qquad 1\le j\le k-1,
\end{aligned}
\end{equation}
for some $c,\,\Lambda_1>0$.
\label{prop_phi_l}
\end{proposition}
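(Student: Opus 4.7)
The plan is to solve the auxiliary system \eqref{eq_nl_phi_j_pr} by a Banach fixed-point argument, after establishing the invertibility of the model linear operator $L_0 := \Delta_{\Sigma_\eps} + \partial_t^2 + F'(v_\star)$ subject to the orthogonality constraint $\int_\R \phi_l(\pe,t)\, v'_\star(t)\, dt = 0$ for every $\pe \in \Sigma_\eps$. The subscript $\bot$ in the space $X^{\sharp,\beta}_{\bot,\mu,\rho}$ encodes precisely this constraint, and the right-hand side of \eqref{eq_nl_phi_j_pr} has been defined so that the $L^2_t$ projection onto $v'_\star$ is automatically subtracted via $P^\sharp_l \, v'_\star$.

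First I would prove a linear a priori estimate: if $g_l \in Y^{\sharp,\beta}_{\mu,\rho}$ satisfies $\int_\R g_l(\pe,t)\, v'_\star(t)\, dt = 0$ for all $\pe \in \Sigma_\eps$, then $L_0 \phi_l = g_l$ together with the orthogonality of $\phi_l$ against $v'_\star$ admits a unique solution with
$$\|\phi_l\|_{X^{\sharp,\beta}_{\mu,\rho}} \leq C\, \|g_l\|_{Y^{\sharp,\beta}_{\mu,\rho}},$$
uniformly in $\eps$ small. The proof goes by spectral decomposition in the $t$-variable: the one-dimensional operator $-\partial_t^2 - F'(v_\star)$ has $v'_\star$ as the unique $L^2$ null direction and is coercive on $(v'_\star)^\perp$ with a spectral gap, so projecting onto $(v'_\star)^\perp$ reduces the problem to an equation on $\Sigma_\eps$ with a coercive potential. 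Stability of $\Sigma$ established in Theorem \ref{Th_Sigma_detailed} and the asymptotic conicity control the weighted norms in $s(\eps \pe)$. This is carried out in detail in Section 5 of \cite{AKR1}, and the same argument applies here.

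Next, I plug in $\psi = \psi(\phi, \h)$ from Proposition \ref{prop_psi} and recast \eqref{eq_nl_phi_j_pr} as the fixed-point equation
$$\phi_l = \mathcal{T}_l(\phi, \h) := L_0^{-1}\bigl(-\chi_4 S(w^\sharp_l) - \mathtt{N}^\sharp_l(\psi(\phi,\h), \phi, \h) + P^\sharp_l(\cdots)\, v'_\star\bigr)$$
on the ball $B := \{\phi = (\phi_1,\dots,\phi_k) : \|\phi_l\|_{X^{\sharp,\beta}_{\mu,\rho}} < \Lambda_1 \eps^{2+\mu},\, 1 \le l \le k\}$. The source $\chi_4 S(w^\sharp_l)$ has size $O(\eps^{2+\gamma})$ in $Y^{\sharp,\beta}_{\mu,\rho}$ by Lemma \ref{lemma_new_error}, after subtracting the resonant $v'_\star$ component. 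The nonlinear map $\mathtt{N}^\sharp_l$ is a small Lipschitz perturbation thanks to: the estimate $|Q_{w^\sharp_l}(\varphi)| \lesssim \varphi^2$, the exponential decay of $\chi_4(F'(w^\sharp_l) + 2)$ and of $F'(w^\sharp_l) - F'(v_\star)$ in the cutoff region (which compensates for the lack of $t$-decay in $\psi^\sharp_l$), the error estimates on the geometric remainders \eqref{est_remainder_laplacian}, and the Lipschitz bound on $\psi(\phi, \h)$ from Proposition \ref{prop_psi} of size $O(\eps^{2+\mu})$. Choosing $\Lambda_1$ sufficiently large and $\eps$ sufficiently small, $\mathcal{T}_l$ maps $B$ into itself and is a contraction, so the contraction mapping theorem yields a unique $\phi(\h) \in B$.

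Lipschitz dependence of $\phi$ on $\h$ follows by differentiating the fixed-point identity, using that both the approximation $w$ and the cutoffs $\chi_j$ depend smoothly on $\h$ through the Fermi map $X_{\eps,\h_l}$ and its first derivatives, and that the associated variations produce terms controlled by $\eps^{2+\mu}(\|\hat{\q}^1 - \hat{\q}^2\|_{\mathcal{D}} + \|\hat{\q}_k^1 - \hat{\q}_k^2\|_{\mathcal{C}})$, again using Proposition \ref{prop_psi}. The main obstacle in the whole scheme is the uniform linear estimate for $L_0$ on the non-compact product $\Sigma_\eps \times \R$ with weighted $L^\infty$-Hölder norms: enforcing the orthogonality to $v'_\star$ at every $\pe$ while maintaining control at infinity on $\Sigma_\eps$ requires combining the spectral gap in $t$ with a Fredholm-type argument on $\Sigma_\eps$ of the kind developed in \cite{AKR1}, and this is where the asymptotic analysis of $\Sigma$ developed in Section 2 and of the Jacobi-Toda operator in Section 3 enters crucially.
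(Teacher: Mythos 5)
Your outline matches the paper's approach: the paper proves this proposition by citing Proposition 5.3 of \cite{AKR1}, whose argument is exactly the scheme you describe --- a uniform a priori/invertibility estimate for $\Delta_{\Sigma_\eps}+\partial^2_t+F'(v_\star)$ on the $v'_\star$-orthogonal complement in the weighted H\"older spaces, followed by a contraction mapping on a ball of radius $O(\eps^{2+\mu})$ (the resonant part of $S(U_1)$ being removed by the projection $P^{\sharp}_l v'_\star$), and a Lipschitz estimate in $\h$. One small remark: the uniform linear estimate for the projected operator rests on the spectral gap of $-\partial^2_t-F'(v_\star)$ on $(v'_\star)^\perp$ together with barrier arguments for the decay in $s(\eps\pe)$, not on the stability of $\Sigma$, which enters only in the bifurcation equation.
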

For a proof, see Proposition $5.3$ of \cite{AKR1}.

Now we consider the bifurcation equation (\ref{bifo_eq}). Computing $P_{l}^{\sharp}(\psi,\phi_1,\dots,\phi_k,\h_1,\dots,\h_k)$ explicitly, through the expansion of the Laplacian given by (\ref{Laplacian_Fermi_st}), it is possible to see that the bifurcation equation \ref{bifo_eq} is equivalent to the Jacobi-Toda type system ($\delta=\eps^2$ in system \eqref{JT-system})
\begin{equation}
\label{eq-bifo-proj}
\eps^2(\Delta_\Sigma \h_j+|A_\Sigma|^2 \h_j)-a_\star(e^{-\sqrt{2}(\h_j-\h_{j-1})}-e^{-\sqrt{2}(\h_{j+1}-\h_j)})=\eps^2 f_j(\pe,\h_1,\dots,h_k)\qquad\text{in $\Sigma$, $1\le j\le k$}
\end{equation}
with $f_j$ fulfilling
\begin{equation}
\label{dec_f_l}
|f_j(\pe,\h_1,\dots,\h_k)|\le c\eps^{\mu}(s(\pe)^2+2)^{2+\mu} \quad \hbox{for} \quad \pe\in\Sigma,\quad\,1\le j\le k,
\end{equation}
where $\,\mu:=\min\{\gamma,\bar{\gamma}\}>0$ with $\gamma$ and $\bar{\gamma}$ begin the constants in Lemma \ref{lemmaerrorSUzero} and relation (\ref{est-error-far}) and 
$$
a_{\star}= \|v'_{\star}\|^{-2}_{L^2(\R)}\int_{\R}6(1-v_\star^2)e^{-\sqrt{2}t}v'_{\star}(t)dt>0
$$
is the constant in \eqref{AClinearOrthCond1}. 

Arguing as in Section \ref{subs-decoupling}, setting $\q^\sharp:=(\q_1,\dots,\q_{k-1})$, it is possible to rewrite (\ref{eq-bifo-proj}) as
\begin{equation}
\label{eq-bifo-sharp}
\begin{aligned}
J_\Sigma {\q}^\sharp+a_\star \sqrt{2}A \q^\sharp&=
\hat{\hat{\mathcal{F}}}_{\eps^2,j}(\cdotp,B^{-1}(\q^\sharp,\hat{\q}_k))+\hat{f}_j(\pe,{\tt v}+B^{-1}(\q^\sharp,\hat{\q}_k))\qquad 1\le j\le k-1\\
J_\Sigma \hat{\q}_k&=\hat{f}_k(\pe,{\tt v}+B^{-1}(\q^\sharp,\hat{\q}_k))
\end{aligned}
\end{equation}
where $\hat{f}(\cdotp,\q):=Bf(\cdotp,\q)$, the $(k-1)\times(k-1)$ matrix $A$ is defined in (\ref{def-A}) and $\mathcal{F}_{\eps^2}$ is defined in (\ref{def-remainder-JT}) and $\hat{\mathcal{F}}_{\eps^2}=B\mathcal{F}_{\eps^2}$. Arguing as in Section \ref{subs-decoupling}, we observe that $A$ is similar to a diagonal matrix $M$ with positive eigenvalues, namely $A=VMV^{-1}$, so that (\ref{eq-bifo-sharp}) can be rewritten in terms of $(\tilde{\q},\hat{\q}_k):=(V\q^\sharp,\hat{\q}_k)$ as
\begin{equation}
\label{eq-bifo-to-solve}
\begin{aligned}
\Delta_\Sigma \tilde{\q}_j+|A_\Sigma|^2(1+\sqrt{2}(\mu_j^0+{\tt r}_j))\tilde{\q}_j&=\mathcal{G}_{\eps^2,j}(\tilde{\q})+
V^{-1}\hat{f}_j(\pe,{\tt v}+B^{-1}(V^{-1}\tilde{\q},\hat{\q}_k))\qquad 1\le j\le k-1\\
J_\Sigma \hat{\q}_k&=\hat{f}_k(\pe,{\tt v}+B^{-1}(V^{-1}\tilde{\q},\hat{\q}_k)),
\end{aligned}
\end{equation}
where $\mathcal{G}_{\eps^2}$ is defined in (\ref{def-G}). Arguing as in Section $5.5$ of \cite{AKR1}, it is possible to solve system (\ref{eq-bifo-to-solve}) in a ball
$$B_{\Lambda_0}:=\{(\tilde{\q},\hat{\q}_k)\in (\mathcal{D}^{2,\beta}_{\mu,\frac{1}{2}}(\Sigma))^{k-1}\times \mathcal{C}^{2,\beta}_{\infty,\mu}(\Sigma):\,
\|\tilde{\q}_j\|_{\mathcal{D}^{2,\beta}_{\mu,\frac{1}{2}}(\Sigma)}<\Lambda_0\eps^\mu,\,1\le j\le k-1,\,\|\tilde{\q}_k\|_{\mathcal{C}^{2,\beta}_{\infty,\mu}(\Sigma)}<\Lambda_0\eps^\mu\},$$
provided $\Lambda_0>0$ is large enough. This concludes the construction of a family $u_\eps$ of solutions to (\ref{eq_AC}) satisfying Properties (\ref{geom-inv}) and (\ref{k-end}) of Theorem (\ref{main-th-AC}). Property (\ref{energy-estimate}) of Theorem \ref{main-th-AC} follows arguing exactly as in Section $6$ of \cite{AKR1}.

\section{The Morse index}\label{sec-Morse}

Recall that $\sigma:=|\ln(\eps^2)|$, for $\eps>0$ small. In order to prove that the Morse index is infinite for any $\eps\in(0,\eps_0)$, it is enough to prove the existence of an infinite dimensional space $X$ of compactly supported functions such that the quadratic form 
\begin{equation}
\mathcal{Q}_\sigma(\phi)=\int_\Sigma |\nabla_\Sigma\phi|^2-|A_\Sigma|^2(1+\sigma)\phi^2
\end{equation}
satisfies
\begin{equation}
\label{est-Qsigma}
\mathcal{Q}_\sigma(\phi)<-\int_\Sigma |A_\Sigma|^2\phi^2\qquad\forall\,\phi\in X.
\end{equation}
In order to do so, we first consider the eigenvalue problem for the cone, which reads
\begin{equation}
\label{eigenv-cone}
-\phi_{rr}-\frac{N-1}{r}\phi_r-\frac{N-1}{r^2}\phi=\lambda\frac{N-1}{r^2}\phi
\end{equation}
Introducing the change of variables $r=e^t$ and writing $\phi(r)=v(t)$, equation (\ref{eigenv-cone}) reads
\begin{equation}
\label{eigenv-cone-v}
v_{tt}+(N-2)v_t+(N-1)(1+\lambda)v=0.
\end{equation}
We look for solutions of the form $v(t)=e^{\gamma t}$. A direct calculation shows that $v(t)$ is a solution if and only if
$$\gamma=\gamma_\pm(\lambda):=-\frac{N-2}{2}\pm\sqrt{\left(\frac{N-2}{2}\right)^2-(N-1)(1+\lambda)}=-\frac{N-2}{2}\pm i\sqrt{\alpha(\lambda)}$$
where
$$\alpha(\lambda):=-\left(\frac{N-2}{2}\right)^2+(N-1)(1+\lambda)>0$$
provided $\lambda>\lambda_0(N)>0$. As a consequence, for $\lambda=\sigma-2$ large enough, the functions 
$$v_{\sigma,+}(t)=e^{-\frac{N-2}{2}t}\cos(\sqrt{\alpha(\sigma-2)}t), \qquad v_{\sigma,-}(t)=e^{-\frac{N-2}{2}t}\sin(\sqrt{\alpha(\sigma-2)}t)
$$
are \textit{formal} point-wise eigenfunctions of the {\it Jacobi operator} of the cone $C_{m,n}$ with eigenvalue $\lambda>\lambda_0(N)$, but they  have infinite energy space, i.e.,
$$\int_1^\infty (v_{\sigma,\pm}(\log s))^2\beta(s)a^{m-1}(s)b^{n-1}(s)ds\ge C\int_1^\infty s^{-(N-2)}s^{-2}s^{N-1}ds=\int_1^\infty s^{-1}ds=\infty.
$$

Set
$$
v_\sigma(t):=v_{\sigma,+}(t)=e^{-\frac{N-2}{2}t}\cos(\sqrt{\alpha(\sigma-2)}t)
$$
and observe that $v_\lambda$ vanishes exactly at the points $t_k:=\frac{\pi}{2\sqrt{\alpha(\sigma-2)}}(1+2k)$.

Now, we set
$${\tt w}_{\sigma,k}(\pe):=w_{\sigma,k}(s):=v_\sigma(\log s)\chi_{[t_k,t_{k+1}]}(\log s),\qquad s=s(\pe),\,\forall\,\pe\in\Sigma.$$
It is possible to see that
\begin{equation}
\begin{aligned}
\mathcal{Q}_\sigma({\tt w}_{\sigma,k})&=\int_\Sigma |\nabla_\Sigma{\tt w}_{\sigma,k}|^2-|A_\Sigma|^2(1+\sigma){\tt w}_{\sigma,k}^2=\\
&=\int_\Sigma (-\Delta_\Sigma{\tt w}_{\sigma,k}-|A_\Sigma|^2{\tt w}_{\sigma,k}){\tt w}_{\sigma,k} -\sigma\int_\Sigma |A_\Sigma|^2{\tt w}_{\sigma,k}^2\\
&= \int_\R \left(-\partial^2_s w_{\sigma,k}-\frac{N-1}{s}\partial_s w_{\sigma,k}-\frac{N-1}{s^2}w_{\sigma,k}\right)w_{\sigma,k}s^{N-1}ds-\sigma\int_\R \frac{N-1}{s^2}w_{\sigma,k}^2 s^{N-1}ds\\
&\hspace{350pt} + o(1)
\\
&=-2\int_\R \frac{N-1}{s^2}w_{\lambda,k}^2 s^{N-1}ds +o(1)
\\
&=-2\int_\Sigma |A_\Sigma|^2{\tt w}_{\sigma,k}^2 +o(1),\end{aligned}
\end{equation}
where the terms $o(1)$ tend to zero as $k\to \infty$, uniformly in $\sigma$. Therefore,  (\ref{est-Qsigma}) is satisfied for any $\phi$ in the infinite dimensional space
$$
X:={\rm span}\{{\tt w}_{\sigma,k}:\,k\ge k_0\}.
$$

Setting
\begin{equation}
\label{cond-sigma-tilde}
\tilde{\sigma}:=W\big(\frac{2\sqrt{2}a_\star}{\eps^2\overline{\beta}}\big)>0,
\end{equation}
where $\overline{\beta}:=\max \limits_{\pe \in \Sigma}\beta(s(\pe))>0$ and where $\beta(s)$ is defined in \eqref{def:Asigma}. Due to the monotonicity of $W$ we have
\begin{equation}\notag
\sqrt{2}w=W\big(\frac{2\sqrt{2}a_\star}{\eps^2\beta}\big)\ge W\big(\frac{2\sqrt{2}a_\star}{\eps^2\overline{\beta}}\big)=\tilde{\sigma}.
\end{equation}

Using  the function $\chi_3$ defined in \eqref{def:chi_j}, we consider the $C^{0,1}(\R^{N+1})$ functions given by
$$
v_{k,\tilde{\sigma}}:=\sum_{j=1}^m (\chi_3{\tt w}_{k,\tilde{\sigma}}(\eps\cdotp)v'_\star)^{\natural}_{j}.$$
The idea is that $v_{k,\tilde{\sigma}}$ is zero far from the graphs of $\h_j(\eps\cdotp)$ over $\Sigma_\eps$, for $1\le j\le k$, and near those graphs it is given by ${\tt w}_{k,\tilde{\sigma}}$ in the direction of the graph, by $v'_\star$ in the orthogonal direction.
\begin{lemma}
\label{lemma-Morse-infinity}
There exists $\bar{k}>0$ and $\bar{\eps}>0$ such that, for any $\eps\in(0,\bar{\eps})$ and $k>\bar{k}$, we have
$$\int_{\R^{N+1}}|\nabla v_{k,\tilde{\sigma}}|^2+(3 u_\eps^2-1) v_{k,\tilde{\sigma}}^2<-c\eps^{2-N}<0.$$
\end{lemma}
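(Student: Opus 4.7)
The plan is to show that, at leading order in $\eps$, the Allen--Cahn quadratic form evaluated at $v_{k,\tilde\sigma}$ reduces to $\|v'_\star\|_{L^2(\R)}^2\,\eps^{2-N}$ times the geometric Jacobi-type form $\mathcal{Q}_{\tilde\sigma}(\w_{\tilde\sigma,k})$ on $\Sigma$, so that the negativity of $\mathcal{Q}_{\tilde\sigma}(\w_{\tilde\sigma,k})$ already proved at the opening of this section can be invoked. First, by construction each summand $(\chi_3\w_{k,\tilde\sigma}(\eps\cdot)v'_\star)^{\natural}_{j}$ is supported in its Fermi neighborhood $\mathcal{N}_{j,\eps}$, and by \eqref{cond_growth_hl} the graphs of the $\h_j$ on $\Sigma_\eps$ are separated by distances of order $|\log\eps|$, so the cross terms between distinct indices $j\ne l$ are exponentially small. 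Writing the integral on each $\mathcal{N}_{j,\eps}$ in the Fermi chart $(\pe,t)$ of $\Sigma_\eps$ via Lemma \ref{lemma_coord_Fermi}, setting $\phi(\pe):=\w_{k,\tilde\sigma}(\eps\pe)$, and splitting $(3u_\eps^2-1)=(3v_\star^2-1)+(3u_\eps^2-3v_\star^2)$, the ``vertical'' block integrates to zero for each fixed $\pe$ since
\begin{equation*}
\int_\R\bigl[(v''_\star)^2+(3v_\star^2-1)(v'_\star)^2\bigr]\,dt=\int_\R v'_\star\,\mathcal{L}_0 v'_\star\,dt=0,\qquad \mathcal{L}_0:=-\partial_t^2+(3v_\star^2-1),
\end{equation*}
and $\mathcal{L}_0 v'_\star=0$. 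The remaining tangential terms combine with the metric Jacobian expansion $\det(I-(t+\h_j)A_\Sigma)=1-\tfrac12|A_\Sigma|^2(t+\h_j)^2+\ldots$ in the Fermi coordinates to produce the minimal-surface second variation $\|v'_\star\|_{L^2}^2\int_{\Sigma_\eps}(|\nabla_{\Sigma_\eps}\phi|^2-|A_{\Sigma_\eps}|^2\phi^2)\,d\sigma_{\Sigma_\eps}$ at each interface.

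The remaining piece is the coupling $\int_{\Sigma_\eps}\mathcal{K}_j(\pe)\phi^2\,d\sigma_{\Sigma_\eps}$ with $\mathcal{K}_j(\pe):=\int_\R(3u_\eps^2-3v_\star^2)(v'_\star)^2\,dt$. Writing $u_\eps=(-1)^{j-1}v_\star(t)+G_j(\pe,t)$ and expanding $G_j$, on the bounded $t$-window where $v'_\star$ is non-negligible, as
\begin{equation*}
G_j(\pe,t)\sim 2(-1)^j\bigl(e^{\sqrt 2\, t}e^{-\sqrt 2(\h_{j+1}-\h_j)(\eps\pe)}-e^{-\sqrt 2\, t}e^{-\sqrt 2(\h_j-\h_{j-1})(\eps\pe)}\bigr),
\end{equation*}
the oddness of $v_\star(v'_\star)^2$ in $t$ forces
$\mathcal{K}_j(\pe)=-c_0\bigl(e^{-\sqrt 2(\h_{j+1}-\h_j)}+e^{-\sqrt 2(\h_j-\h_{j-1})}\bigr)(\eps\pe)+\mathrm{l.o.t.}$ with a positive constant $c_0$. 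The Jacobi--Toda balance $\eps^2\beta w=a_\star e^{-\sqrt 2\, w}$ from \eqref{eq-w} and the choice of $\tilde\sigma$ in \eqref{cond-sigma-tilde} then translate this into $\mathcal{K}_j(\pe)\le -c_1\,\eps^2\,\tilde\sigma\,|A_\Sigma(\eps\pe)|^2$ for some $c_1>0$. Summing over the interfaces and rescaling from $\Sigma_\eps$ to $\Sigma$ via $d\sigma_{\Sigma_\eps}=\eps^{-N}d\sigma_\Sigma$, $|A_{\Sigma_\eps}|^2=\eps^2|A_\Sigma|^2$ and $|\nabla_{\Sigma_\eps}\phi|^2=\eps^2|\nabla_\Sigma\w_{\tilde\sigma,k}|^2\circ\eps\mathrm{id}$ produces
\begin{equation*}
\int_{\R^{N+1}}|\nabla v_{k,\tilde\sigma}|^2+(3u_\eps^2-1)v_{k,\tilde\sigma}^2\;\le\; C\,\|v'_\star\|_{L^2}^2\,\eps^{2-N}\,\mathcal{Q}_{\tilde\sigma}(\w_{\tilde\sigma,k})+o(\eps^{2-N}).
\end{equation*}
Invoking the identity $\mathcal{Q}_{\tilde\sigma}(\w_{\tilde\sigma,k})\le -\int_\Sigma|A_\Sigma|^2\w_{\tilde\sigma,k}^2\,d\sigma_\Sigma<0$ for $k\ge\bar k$, which is precisely the opening computation of this section with its $o(1)$ being uniform in $\tilde\sigma$, and choosing $\bar\eps$ small enough to absorb the remainder, yields the claimed bound. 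Since the $\s$-supports of $\w_{\tilde\sigma,k}$ for distinct values of $k$ are disjoint intervals, the resulting family is linearly independent and so spans an infinite-dimensional negative subspace, which yields the infinite Morse index.

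The main obstacle is the precise identification of $\mathcal{K}_j$ in the second paragraph: one must expand $u_\eps$ with enough accuracy to isolate the exponential interaction terms $e^{-\sqrt 2(\h_{j\pm 1}-\h_j)}$ and to match them, via the Jacobi--Toda balance, with the weight $\tilde\sigma$ appearing in $\mathcal{Q}_{\tilde\sigma}$. In particular, one must verify that the corrections $\eta$, $\phi_j$ and $\psi$ built into $u_\eps$ in Sections \ref{section-gluing} and \ref{sec-Lyapunov-Schmidt} contribute only $o(\eps^2\tilde\sigma\,|A_\Sigma|^2)$ to $\mathcal{K}_j$, and that the boundary indices $j=1,k$ (with only one neighbour) still deliver a negative contribution. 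The secondary error control (curvature cross terms from Lemma \ref{lemma_coord_Fermi}, cutoff derivatives from $\chi_3$, and cross interface integrals) is standard within the reduction framework developed in Sections \ref{section-gluing}--\ref{sec-Lyapunov-Schmidt}.
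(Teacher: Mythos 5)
Your proposal follows essentially the same route as the paper: the same test functions $v_{k,\tilde\sigma}$ localized along the $k$ interfaces, the same splitting of the Allen--Cahn quadratic form in Fermi coordinates into the vanishing one-dimensional block ($\mathcal{L}_0 v'_\star=0$), the Jacobi form of $\Sigma_\eps$, and an interaction term; and the same final step of converting the interaction into the weight $\tilde\sigma$ via the Jacobi--Toda balance \eqref{eq-w}, \eqref{cond-sigma-tilde} and invoking the opening computation $\mathcal{Q}_{\tilde\sigma}(\phi)\le-\int_\Sigma|A_\Sigma|^2\phi^2$.

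One point in your ``main obstacle'' paragraph is, however, stated incorrectly and is exactly where the paper has to do real work: you propose to verify that the corrections $\eta$, $\phi_j$, $\psi$ built into $u_\eps$ contribute only $o(\eps^2\tilde\sigma|A_\Sigma|^2)$ to $\mathcal{K}_j$. This is true for $\phi_j$ and $\psi$ (size $O(\eps^{2+\mu})$) and for the $\psi_1,\psi_2$-parts of $\eta$ (size $O(\eps^2|A_\Sigma|^2)$, smaller than $\eps^2\tilde\sigma|A_\Sigma|^2$ by a factor $\tilde\sigma^{-1}$), but it is \emph{false} for the $\psi_0$-parts of $\eta_j$: these carry the factors $e^{-\sqrt2(\h_j-\h_{j-1})}$, $e^{-\sqrt2(\h_{j+1}-\h_j)}$ and are therefore of exactly the same order $\eps^2\,\w\,|A_\Sigma|^2$ as the leading interaction you extract from $U_0$. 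Consequently your constant $c_0$ in $\mathcal{K}_j=-c_0\bigl(e^{-\sqrt2(\h_{j+1}-\h_j)}+e^{-\sqrt2(\h_j-\h_{j-1})}\bigr)+\mathrm{l.o.t.}$ is not the one obtained from the $U_0$-expansion alone; the paper computes the combined coefficient through the identities obtained by differentiating the equations for $\psi_0$ and $\psi_2$, namely $6\int_\R \psi_2 v_\star v'_\star=-\tfrac{c_\star}{2}$ and the corresponding identity relating $\psi_0-2e^{\sqrt2 t}$ to $\sqrt2\,a_\star c_\star$, which show that the net coefficient is $2\sqrt2\,a_\star c_\star>0$ per neighbouring interface, so the sign is preserved. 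Your argument therefore goes through, but only after this same-order contribution of $\eta$ is computed rather than discarded; as written, the verification you set yourself cannot succeed, even though the conclusion (and the rest of your outline, including the treatment of the extreme indices $j=1,k$, which still see one neighbour since $k\ge2$) is sound.
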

\begin{remark}
Lemma \ref{lemma-Morse-infinity} yields that, for $\eps(0,\bar{\eps})$, the Morse index of $u_\eps$ is infinite. In fact the functions ${\tt w}_{k,\tilde{\sigma}}$ can be approximated by functions in $C^{\infty}_c(\R^{N+1})$.
\end{remark}

\begin{proof}
Setting $\phi:={\tt w}_{k,\sigma}$ and using the expansion of the Laplacian in Fermi coordinates, in $X_{\eps,\h_j}^{-1}(\mathcal{N}_{j,\eps})$ we compute
\begin{equation}
\begin{aligned}
\Delta(\phi v'_\star)&=\eps^2\Delta_\Sigma\phi v'_\star+\phi v'''_\star-\eps^2 J_\Sigma\phi\h_j v'_\star-\eps^2|A_\Sigma|^2\phi tv''_\star-2\eps\nabla_\Sigma \h_j\cdotp\nabla_\Sigma\phi v''_\star+\eps^2|\nabla_\Sigma\h_1|^2 \phi v'''_\star\\
&-\eps\Q\phi v''_\star+\eps^2(\ta^{ik}\phi_{ik}+\bi^i\phi_i)v'_\star-\eps^2(\ta^{ik}{\h_1}_{ik}+\bi^i{\h_1}_i)\phi v''_\star
-2\eps \ta^{ik}\phi_i{\h_1}_k v''_\star+\eps^2 \ta^2|\nabla_\Sigma \h_1|^2 \phi v'''_\star
\end{aligned}
\end{equation}
and
\begin{equation} 
\begin{aligned}
(1-3U_1^2)w'_j&=(1-3w_j^2)w'_j-6v_\star v'_j\bigg((2e^{-\sqrt{2}t}-\psi_0(-t))e^{-\sqrt{2}(\h_j-\h_{j-1})}\\
&+(-2e^{-\sqrt{2}t}+\psi_0(t))e^{-\sqrt{2}(\h_{j+1}-\h_j)}
+\eps^2|\nabla_\Sigma\h_j|^2\psi_1(t)+\eps^2|A_\Sigma|^2\psi_2(t)\bigg)\\
&+O(\eps^{2+\gamma}e^{-\varrho|t|}(1+s)^{-(2+\gamma)})\chi_{[t_k,t_{k+1}]}.
\end{aligned}
\end{equation}
Moreover, setting $v_j(t):=w_j(t+\h_{j+1}-\h_j)$ and $u_j(t):=w_j(t+\h_j-\h_{j-1})$, we have
$$(1-3U_1^2)v_j=(1-3v_j^2)v'_j+6\sqrt{2}(1-v_\star^2)e^{\sqrt{2}t}e^{-\sqrt{2}(\h_{j+1}-\h_j)}
+O(\eps^{2+\gamma}e^{-\varrho|t|}(1+s)^{-(2+\gamma)})\chi_{[t_k,t_{k+1}]}$$
for some $\varrho>0$, and
$$(1-3U_1^2)u_j=(1-3u_j^2)u'_j+6\sqrt{2}(1-v_\star^2)e^{-\sqrt{2}t}e^{-\sqrt{2}(\h_{j}-\h_{j-1})}
+O(\eps^{2+\gamma}e^{-\varrho|t|}(1+s)^{-(2+\gamma)})\chi_{[t_k,t_{k+1}]}.$$

Differentiating the equations satisfied by $\psi_0$ and $\psi_2$ and integrating over $\R$ we can see that
\begin{equation}\notag
\begin{aligned}
6\int_\R \psi_2 v_\star v'_\star &=-\frac{1}{2}\int_\R (v'_\star)^2=-\frac{c_\star}{2}\\
6\int_\R (\psi_0-2e^{\sqrt{2}t}) v'_\star &=\sqrt{2}\int_\R 6(1-v_\star^2)e^{\sqrt{2}t}v'_\star=\sqrt{2}a_\star c_\star
\end{aligned}
\end{equation}
so that, multiplying by $v'_j$ and integrating over $\R$, we have
\begin{equation}
\begin{aligned}
\int_\R\left(-\Delta(\phi v'_\star)+(3U_1^2-1)\phi v'_\star\right) v'_\star dt&=-\eps^2c_\star (\Delta_\Sigma\phi+|A_\Sigma|^2\phi)(\eps\cdotp)\\
&-2\sqrt{2} a_0 c_\star e^{-\sqrt{2}(\h_{j-1}(\eps\cdotp)-\h_j(\eps\cdotp))}\phi(\eps\cdotp)+O(\eps^{2+\gamma}(1+s)^{-(2+\gamma)})\chi_{[t_k,t_{k+1}]}.
\end{aligned}
\end{equation}
In conclusion, multiplying by $v:=v_{k,\tilde{\sigma}}$ and integrating over $\R^{N+1}$ we have
\begin{equation}
\begin{aligned}
\int_{\R^{N+1}}\left(-\Delta v+(3u_\eps^2-1)v\right) v d\xi&=-\eps^2 c_\star\left( m\int_{\Sigma_\eps} J_{\Sigma}\phi(\eps\cdotp)\phi(\eps\cdotp)+4\sqrt{2}a_\star (m-1)\int_{\Sigma_\eps} {\tt w}(\eps\cdotp) \phi^2(\eps\cdotp)\right)(1+O(\eps^{\gamma}))\\
\le &\eps^{2-N} c_\star\left( m\int_\Sigma \left(|\nabla_\Sigma \phi|^2-|A_\Sigma|^2\phi^2\right)-4\sqrt{2}a_\star (m-1)\int_{\Sigma} {\tt w} \phi^2\right)(1+O(\eps^{\gamma}))\\ 
\le& C\eps^{2-N}\mathcal{Q}_{\tilde{\sigma}}(\phi)(1+O(\eps^{\gamma}))\\
\le& -C\eps^{2-N}\left(\int_{\Sigma} |A_{\Sigma}|^2\phi^2\right)(1+O(\eps^{\gamma}))<0
\end{aligned}
\end{equation}
where $$c_\star:=\int_\R (v'_\star)^2.
$$

This completes the proof of ${\rm iv.}$ in Theorem \ref{main-th-AC} and thus its proof. 
\end{proof}


\begin{thebibliography}{100}

\bibitem{AKR1} \textsc{O. I. Agudelo Rico, M. Kowalczyk, M. Rizzi} Doubling construction for $O(m)\times O(n)$ invariant solutions to the Allen-Cahn equation, accepted by Journal of nonlinear analysis.

\bibitem{ADW} \textsc{O. Agudelo, M. del Pino, J. Wei.} Higher-dimensional catenoid, Liouville equation, and Allen-Cahn equation. Int. Math. Res. Not. IMRN 2016, no. 23, 7051-7102. 

\bibitem{ABPRS} \textsc{H. Alencar, A. Barros, O. Palmas, G.  Reyes, W. Santos.} $O(m)\times O(n)$-invariant minimal hypersurfaces in $\R^{m+n}$. \emph{Ann. Global Anal. Geom.} {27} (2005), no. 2, 179--199. 

\bibitem{AA} \textsc{W. K. Allard, F. J. Almgren, Jr.} On the radial behavior of minimal surfaces and the uniqueness of their tangent cones. Ann. of Math. (2) 113 (1981), no. 2, 215--265.

\bibitem{AC} \textsc{S. Allen and J. W. Cahn.} A microscopic theory for antiphase boundary motion and its application
to antiphase domain coarsening, Acta. Metall. 27 (1979), 1084--1095.

\bibitem{AmCa} \textsc{L. Ambrosio, X. Cabré.} Entire solutions of semilinear elliptic equations in R3 and a conjecture of De Giorgi, J. Am. Math. Soc. 13(2000) 725--739.

\bibitem{Be} \textsc{S.N. Bernstein.} Sur une theoreme degeometrie et ses applications aux equations derivees partielles du type elliptique, Commun. Soc. Math. Kharkov 15 (1915-1917) 38--45.

\bibitem{BDG} \textsc{E. Bombieri, E. De Giorgi, E. Giusti.} Minimal cones and the Bernstein problem. \emph{Invent. Math.} \textbf{7} (1969), 243--268. 

\bibitem{Da} \textsc{A. Davini.} On calibrations for Lawson's cones. \emph{Rend. Sem. Mat. Univ. Padova} {111} (2004), 55--70.


\bibitem{DeG} \textsc{E. DeGiorgi.} Convergence problems for functionals and operators, in:Proc. Int. Meeting on Recent Methods in Nonlinear Analysis, Rome,1978, Pitagora,Bologna,1979,pp.131--188.

\bibitem{DKW} \textsc{M. del Pino, M. Kowalczyk, J. Wei,} On De Giorgi conjecture in dimensions $N\geq 9$, Ann. Math.174(3) (2011) 1485--1569.

\bibitem{FSV} \textsc{A. Farina, B. Sciunzi, E. Valdinoci.} Enrico Bernstein and De Giorgi type problems: new results via a geometric approach. Ann. Sc. Norm. Super. Pisa Cl. Sci. (5) 7 (2008), no. 4, 741--791.

\bibitem{GG} \textsc{N. Ghoussoub, C. Gui.} On a conjecture of De Giorgi and some related problems, Math. Ann. 311(1998) 481--491.

\bibitem{HS} \textsc{R. Hardt, L. Simon.} Area minimizing hypersurfaces with isolated singularities. \emph{J. Reine Angew. Math.} {362} (1985), 102--129.

\bibitem{M} 
\textsc{L. Mazet.} Minimal hypersurfaces asymptotic to Simons cones. J. Inst. Math. Jussieu 16 (2017), no. 1, 39--58.

\bibitem{Mo} \textsc{L. Modica.} $\Gamma-$Convergence to Minimal Surfaces Problem and Global Solutions of $\Delta u=2(u3 - u)$. In Proceedings of the International Meeting on Recent Methods in Nonlinear
Analysis (Rome, 1978), 223-244.

\bibitem{MoMo} \textsc{L. Modica, S. Mortola,} Un esempio di G--convergenza. (Italian) Boll. Un. Mat. Ital. B (5) 14 (1977), no. 1, 285--299.

\bibitem{PW} \textsc{F. Pacard, J. Wei.} Stable solutions of the Allen-Cahn equation in dimension 8 and minimal cones. J. Funct. Anal. 264 (2013), no. 5, 1131--1167.

\bibitem{Sa} \textsc{O. Savin.} Regularity of at level sets in phase transitions, Ann. Math. 169(2009) 41--78.

\bibitem{S} \textsc{L. Simon.} Leon Isolated singularities of extrema of geometric variational problems. Harmonic mappings and minimal immersions (Montecatini, 1984), 206--277, Lecture Notes in Math., 1161, Springer, Berlin, 1985.

\end{thebibliography}
\end{document}